\documentclass[preprint,11pt]{article}
\usepackage{amssymb}
\usepackage{amsfonts}
\usepackage{subfigure}
\usepackage{mathrsfs}
\usepackage{amssymb,amsfonts,amsmath}
\usepackage{graphicx,graphics,epstopdf}
\usepackage{url}
\usepackage[]{algorithm2e}
\usepackage{ctable} 
\graphicspath{{./figures/}}
\usepackage{latexsym}
\usepackage{esint}
\usepackage{times}
\usepackage{multicol,enumerate}
\usepackage{epstopdf}
\usepackage{color}
\usepackage{fullpage}
\usepackage{setspace}
\usepackage[english]{babel}
\usepackage[version=3]{mhchem}
\usepackage{amsmath, amssymb, amsthm}
\usepackage{verbatim, latexsym}
\usepackage{colortbl}
\usepackage{multirow}
\usepackage{float}
\restylefloat{table}
\def\Xint#1{\mathchoice
{\XXint\displaystyle\textstyle{#1}}%
{\XXint\textstyle\scriptstyle{#1}}%
{\XXint\scriptstyle\scriptscriptstyle{#1}}%
{\XXint\scriptscriptstyle\scriptscriptstyle{#1}}%
\!\int}
\def\XXint#1#2#3{{\setbox0=\hbox{$#1{#2#3}{\int}$ }
\vcenter{\hbox{$#2#3$ }}\kern-.6\wd0}}

\def\dashint{\Xint-}
\newcommand{\roma}{\mathrm{I}}
\newcommand{\romb}{\mathrm{II}}
\newcommand{\romc}{\mathrm{III}}

\newcommand{\cellp}{\widehat{Q}_1^{\epsilon}}

\newcommand{\img}{{\bold{i}}}

\newcommand{\mc}[1]{\mathcal{#1}}

\newcommand{\dx}{\,\mathrm{d}x}
\newcommand{\ds}{\,\mathrm{d}s}
\newcommand{\prnt}[1]{\left( #1 \right)}

\newcommand{\norm}[1]{\left\|#1\right\|}
\newcommand{\normHsemi}[2]{\left|#1\right|_{H^{1}\prnt{#2}}}

\newcommand{\normL}[2]{\norm{#1}_{L^2\prnt{#2}}}

\newcommand{\normHp}[3]{\norm{#1}_{H^{#3}\prnt{#2}}}

\newcommand{\Cov}{C_{\mathrm{ov}}}
\newcommand{\Cw}{{\rm C}_{\mathrm{weak}}}
\newcommand{\Ce}{{\rm C}_{\mathrm{est}}}
\newcommand{\Const}[1]{{\rm C}_{\mathrm{#1}}}
\newcommand{\Co}{{\rm C}_{\mathrm{ap}}(k)}
\newcommand{\Cpoin}[1]{{\rm C}_{\mathrm{poin}}(#1)}
\newcommand{\Cpoinn}[2]{{\rm C}^{#2}_{\mathrm{poin}}(#1)}
\newcommand{\locv}[3]{{#1}^{#2}_{\mathrm{#3}}}
\newtheorem{theorem}{Theorem}[section]
\newtheorem{assumption}{Assumption}[section]
\newtheorem{remark}{Remark}[section]
\newtheorem{lemma}{Lemma}[section]

\newtheorem{proposition}{Proposition}[section]
\numberwithin{equation}{section}
\usepackage{soul}
\setstcolor{red}

\title{Wavelet-based Edge Multiscale Finite Element Method for Helmholtz problems in perforated domains} 
\author{Shubin Fu\thanks{Department of Mathematics, The Chinese University of Hong Kong, Hong Kong Special Administrative Region. (\texttt{shubinfu89@gmail.com})} \and Guanglian Li\thanks{Corresponding author. Department of Mathematics, Imperial College London, London SW7 2AZ,
UK. (\texttt{lotusli0707@gmail.com}, \texttt{guanlian.li@imperial.ac.uk}). GL acknowledges the
support from the Royal Society through a Newton international fellowship. GL also acknowledges a Research Impulse grant awarded by Department of Mathematics, Imperial College London.} \and
Richard Craster \thanks{Department of Mathematics, Imperial College London, London SW7 2AZ,
UK. (\texttt{r.craster@imperial.ac.uk})}
\and Sebastien G{u}enneau \thanks {Aix Marseille Univ, CNRS, Centrale
  Marseille, Institut Fresnel, Marseille, France. \newline(\texttt{sebastien.guenneau@fresnel.fr})}
}

\begin{document}
\maketitle
\begin{abstract}
We introduce a new efficient algorithm for Helmholtz problems in
perforated domains with the design of the scheme allowing for possibly large wavenumbers.
Our method is based upon the Wavelet-based Edge Multiscale Finite
Element Method (WEMsFEM) as proposed recently in \cite{fu2018edge}. {For a regular coarse mesh with mesh size $H$,} we
establish $\mathcal{O}(H)$ convergence of this algorithm under the
resolution assumption, and with the level parameter being sufficiently
large. The performance of the algorithm is demonstrated by extensive
2-dimensional numerical tests including those motivated by photonic
crystals.

\end{abstract}
\section{Introduction}
The wave propagation through, and scattering from, complex multiscale
structures is an important area of modern wave physics. The wave manipulation and
control achievable by photonic crystals \cite{joannopoulos08a,zolla12a}, and more
recent metamaterial devices \cite{craster12a,cui10a,maier17a,sarbook}, underlie a host of wave
devices in electromagnetism, optics and acoustics such as optical
fibres, interferometers, mode converters, biosensors, thin-film optics
for reflection control, optical switching and filtering and much more.

The canonical model problem is posed in terms of the Helmholtz
equation  
 in a perforated domain (see Figure \ref{fig:testmodel}):
\begin{equation}\label{eq:model}
\left\{
\begin{aligned}
-(\Delta +k^2)u&=f(x)\qquad&&\text{ in }\Omega^{\epsilon}\\
\frac{\partial u}{\partial n}&=0 \qquad&&\text{ on }  \partial Q_1^{\epsilon}\cap\bar{\Omega}_1\\
\frac{\partial u}{\partial n}-\img k u&=0 \qquad&&\text{ on }  \partial \Omega_2.
\end{aligned}
\right.
\end{equation}
Here, we assume $f\in L^2(\Omega^{\epsilon})$, the wavenumber $k$ is real and positive, $\img$ is the
imaginary unit, $\Omega^{\epsilon}$ is the perforated domain (the
potentially complex structure as a model of a photonic crystal) to be
defined in detail later, $\partial Q_1^{\epsilon}\cap\bar{\Omega}_1$
denotes the interface between the perforations and the perforated
domain $\Omega^{\epsilon}$, and $\partial \Omega_2$ refers to the
outer boundary. Complications arise in resolving the fine
structure in the solution when $k$ is large, at high frequencies, and
this is the regime often of interest in applications.
\begin{figure}[H]
	\centering
	\subfigure[model 1]{
		\includegraphics[trim={4cm 9.5cm 4cm 10cm},clip,width=2.5in]{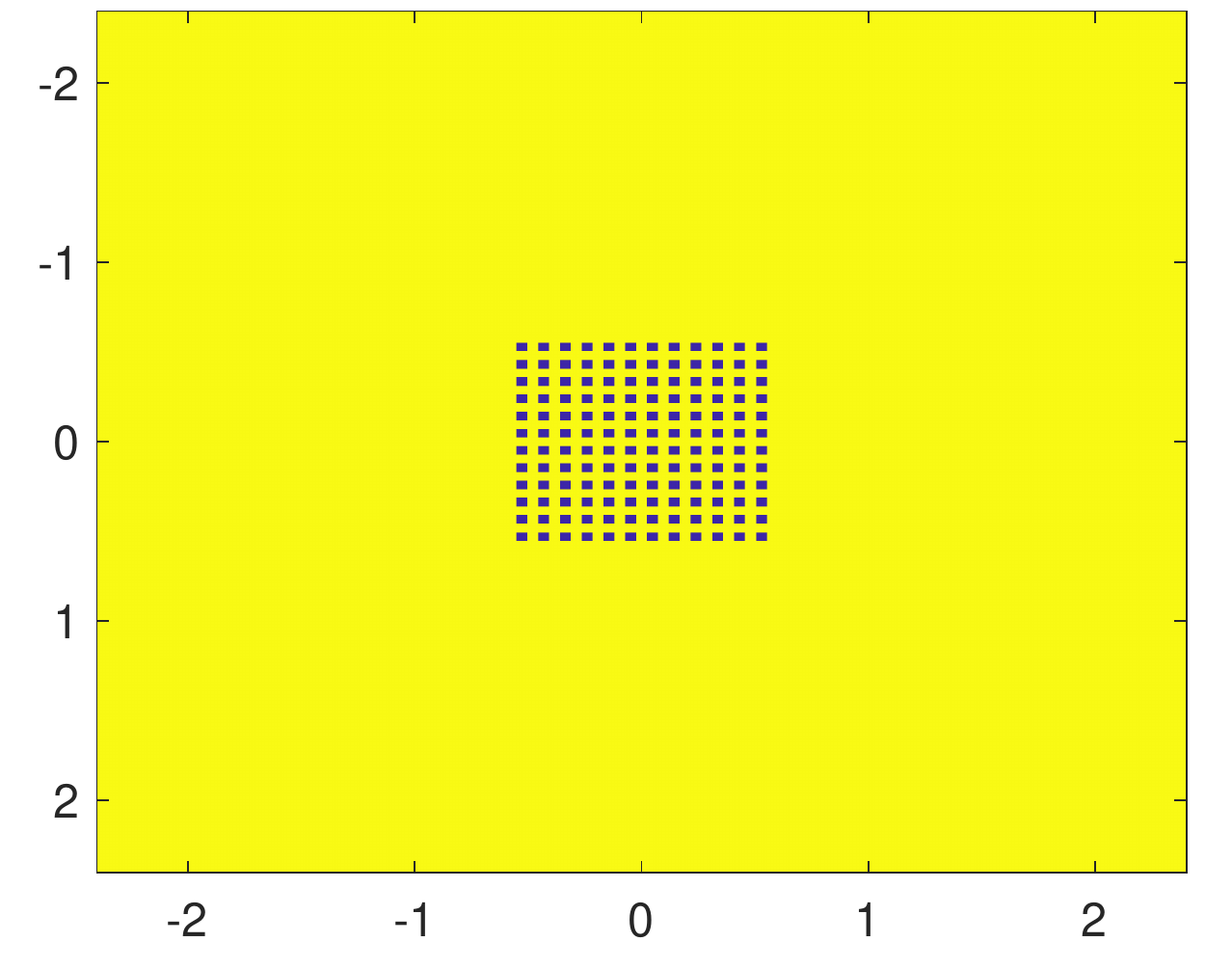}}
	\subfigure[model 2]{
		\includegraphics[trim={4cm 9.5cm 4cm 10cm},clip,width=2.5in]{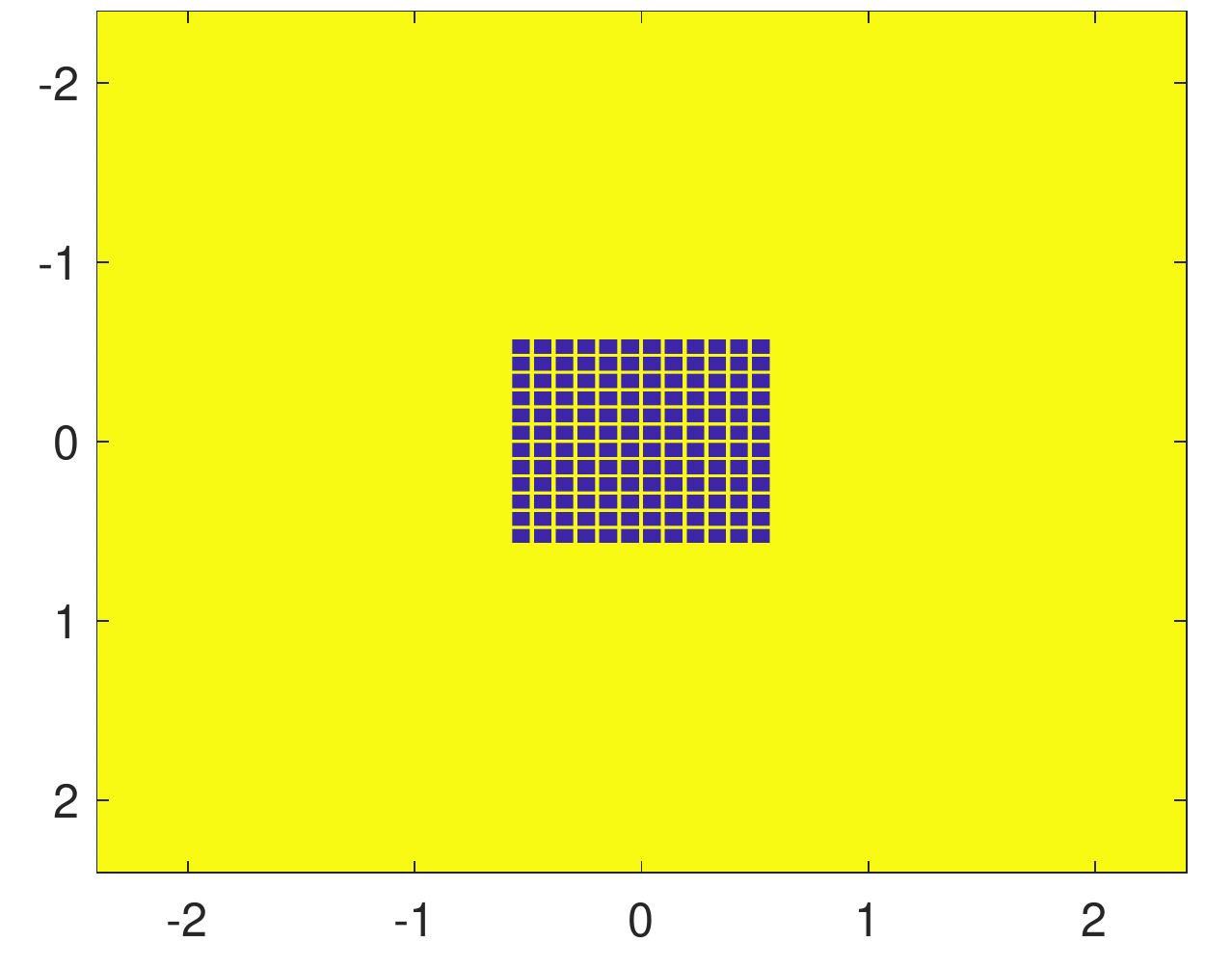}}	
	\caption{Perforated domains. Two models of finite locally periodic
          photonic
          crystals, used later {when they will be given Neumann boundary
          conditions}. Both models have 144 inclusions (in a square
        array $12\times12$), but differ in
          inclusion size, later we introduce forcing and examine the
          wave fields created by the crystal.
           }
	\label{fig:testmodel}
\end{figure}

A key ingredient in any device design, or investigation of a physical effect, is the
accurate, and fast, numerical simulation of the multiscale structures
of interest. This has been the subject of intense and concentrated
research over many years with a variety of techniques employed, the
plane wave expansion methods \cite{joannopoulos08a,johnson01a} are
popular, as are multipole, Rayleigh, methods \cite{zolla12a}, finite
difference time domain (FDTD) \cite{lavrinenko04a} and of
course finite elements \cite{zolla12a}  figure strongly due to
their versatility. Commercial finite element codes such as COMSOL
\cite{comsol} and FDTD such as Lumerical \cite{lumerical}, dominate industry in terms of practicality, but there is a clear
need for more modern implementations of finite-element based numerical
methods in this field; many of the most interesting effects of topical
interest are in three dimensions, such as flat lensing \cite{dubois19a}, antennas, or
involve delicate changes in geometry, as in topological photonics \cite{lu14a}, and may involve many tens, hundreds
or even thousands of cells, each on the micro-scale, forming a
macro-scale object where the wavelengths may be commensurate with the
micro-scale; standard long-wave homogenisation is inappropriate and
standard finite element approaches struggle to cope with the sheer
size of memory required.

Separate from this area of physics, there has been extremely active
research in the modern theory of finite elements and, in particular,
on the development of efficient multiscale methods for practical applications with heterogeneous inseparable multiple scales. Due to this disparity of scales, classical numerical treatments become prohibitively expensive, and even intractable, for many multiscale applications. Nonetheless, motivated by the broad spectrum of practical applications, a large number of multiscale model reduction techniques, e.g., multiscale finite element methods (MsFEMs),
heterogeneous multiscale methods (HMMs), variational multiscale methods, flux norm approach, generalized multiscale finite element methods (GMsFEMs) and localized orthogonal decomposition (LOD), have been proposed in the literature \cite{MR2721592,egh12,MR1979846,MR1455261,MR1660141,li2017error,MR3246801} over
the last few decades. They have achieved great success in the
efficient and accurate simulation of heterogeneous problems; we extend
GMsFEMs
 using wavelets and furthermore investigate their application to the class of wave problems that
encompass photonic crystals.

Designing efficient numerical solvers for Helmholtz equations with
large wavenumbers has also attracted considerable attention over the past few
decades; one of the main challenges is to reduce the so-called wavenumber dependent pollution effect
\cite{MR3585792,MR3123557}. Mitigating the
pollution effect, even for wave propagation through regular structures
with homogeneous physical properties, requires an extremely fine mesh with grid size
depending on the wavenumber $k$, or a very high polynomial degree $p$
within the basis. Consequently this results in an
extremely expensive numerical scheme when the computational domain, or
the wavenumber, is large. Numerical routes based even just around
MsFEMs have not been explored in this context, and one would
anticipate that they might result in efficient numerical solvers for wave
propagation through complex multiscale structures; the WEMsFEM we
develop fits
broadly into the MsFEM framework but with generalisations and
extensions.
It is worthwhile noting that, for multiscale problems, the LOD
approach has been investigated with
\cite{peterseim2017eliminating,peterseim2019computational} having proposed
numerical homogenization to eliminate the pollution
effect for Helmholtz problems in heterogeneous media, however this is
different from perforated domains and so their analysis does not carry
over directly.

In this article we introduce a Wavelet-based Edge Multiscale Finite Element method
(WEMsFEM) for Helmholtz equations in perforated domains, our Algorithm
\ref{algorithm:wavelet}, inspired by the new multiscale algorithm
proposed in \cite{fu2018edge,GL18} for elliptic equations with
heterogeneous coefficients. WEMsFEM takes advantage of the framework of GMsFEM
\cite{egh12}, and utilizes the Partition of Unity Method (PUM)
\cite{Babuska2} as the essential component, and extends these
approaches with additional novel ingredients
that include a rather cheap local solver and provable convergence
rate \cite{fu2018edge}.

The main challenges in Problem \eqref{eq:model} lie in
accurately describing the interfaces, possibly a large number of
perforations, and a large computational domain. The main idea of WEMsFEM
is to utilize wavelets to approximate the solution restricted on the
coarse edges, and then transfer this approximation property to the
interior error estimate. Note that the coarse mesh cannot resolve the
interfaces, nonetheless under the Scale Resolution Assumption
\eqref{ass:resolution} that the coarse mesh grid
$H=\mathcal{O}(k^{-1})$, with $k$ being the wavenumber, we will prove
in Proposition \ref{prop:wavelet-basedconv} that the error of our
proposed multiscale algorithm in the energy norm is of $\mathcal{O}(H)$ given the wavelet parameter $\ell=\mathcal{O}(\log_2 (k C_{\text{ap}}(k)))$ with $C_{\text{ap}}(k)$ being a stability constant defined as in \eqref{res:apriori-dual}.

The remainder of this paper is constructed as follows: We first present, in Section \ref{sec:prelim}, the detailed problem and its basic properties. The Wavelet-based Edge Multiscale Finite Element Method
(WEMsFEM) is introduced in Section \ref{sec:wemsfem} to solve this
problem, and its convergence is analyzed in Section
\ref{sec:convergence}. Furthermore, we present in Section
\ref{sec:numerical} extensive numerical tests to demonstrate the performance of our
proposed algorithm. Finally, we draw together our results for
discussion in Section \ref{sec:conclusion}.


\section{General setting
}\label{sec:prelim}
In this section we present the general setting of the Helmholtz
problem in a perforated domain; we also provide basic properties and
results pertinent to the problem, and an outline of the construction of an ansatz space based on GMsFEM.

We start with the geometric setting of the domain for Problem
\eqref{eq:model}. Let $Q:=[0,1]^d$ be the reference periodicity cell
in $\mathbb{R}^d$ with $d\geq 2$ and we take $Q_0\subset Q$ with infinite
smooth boundary $\partial Q_0$. Denoting $Q_1:=Q\backslash \bar{Q}_0$ as
one unit cell with size $1$,  then the contracted set $\cellp$ is
one cell of the crystal with size $\epsilon$; $\cellp$, and its
$\epsilon$-periodic cloning, $Q_1^{\epsilon}$, are defined as
\[
\widehat{Q}_1^{\epsilon}:=\{x: x/\epsilon\in Q_1\} \qquad \text{and}\qquad
Q_1^{\epsilon}:=\widehat{Q}_1^{\epsilon}+\epsilon \mathbb{Z}^d.
\]

Let $\Omega\subset \mathbb{R}^d$ be a bounded Lipschitz domain, and
also let $\Omega_1\subset \Omega$ and denote by $\Omega_2:=\Omega\backslash \Omega_1$, then the computational domain is
\begin{align}\label{eq:geo}
\Omega^{\epsilon}:= (\Omega_1\cap Q_1^{\epsilon})\cup \Omega_2.
\end{align}


We introduce the complex-valued space $V:=H^{1}(\Omega^{\epsilon};\mathbb{C}):={W^{1,2}(\Omega^{\epsilon};\mathbb{C})}$, equipped with the $k$-weighted norm
\[
\norm{v}_{V}:=\sqrt{\|\nabla v\|_{\Omega^{\epsilon}}^2+k^2\|v\|_{\Omega^{\epsilon}}^2},
\]
and similarly define the complex-valued space $V(D):=H^{1}(D;\mathbb{C})$ also equipped with the $k$-weighted norm
$\norm{\cdot}_{V(D)}$ for all $D\subset \Omega^{\epsilon}$.
Throughout this paper, we denote $(\cdot,\cdot)_{D}$ as the $L^2(D;\mathbb{C})$ inner product for any Lipschitz domain $D$, $\mathrm{Re}\{\cdot\}$, $\mathrm{Im}\{\cdot\}$ and $\bar{\cdot}$ the real part, the imaginary part and the conjugate of a complex value.

The numerical approach we advance uses the weak formulation for
problem \eqref{eq:model} which is to find $u\in V$ such that
\begin{align}\label{eqn:weakform}
a(u,v)=(f,v)_{\Omega^{\epsilon}} \quad \text{for all
} v\in V.
\end{align}
Here, the sesquilinear form $a: V\times V\to \mathbb{C}$ has the form
\begin{align*}
a(v_1,v_2):=\int_{\Omega^{\epsilon}}\nabla v_1\cdot\nabla \bar{v}_2\dx
-k^2\int_{\Omega^{\epsilon}} v_1\cdot\bar{v}_2\dx-\img k\int_{\partial \Omega_2} v_1\cdot\bar{v}_2\ds
\quad \text{ for all } v_1, v_2\in V.
\end{align*}

The following properties of the sesquilinear form $a: V\times V\to \mathbb{C}$ play a critical role, these can be found, e.g., in \cite[Theorem 3.2 and Corollary 3.3]{MR2812565}:
\begin{theorem}[Properties of the sesquilinear form $a: V\times V\to \mathbb{C}$]\label{them:sesquilinear}
The following properties hold:

\begin{itemize}
\item[1.] The sesquilinear form $a: V\times V\to \mathbb{C}$ is bounded: There exists a wavenumber $k$ independent constant, $\Const{b}$, satisfying:
    \[
    |a(v_1,v_2)|\leq \Const{b}\|v_1\|_{V}\|v_2\|_V \qquad\text{ for all }v_1,v_2\in V.
    \]
\item[2.] The following G\aa rding's inequality holds:
\[
\mathrm{Re}\{a(v,v)\}+2k^2\normL{v}{\Omega^{\epsilon}}^2\geq \|v\|_V^2\qquad\text{ for all }v\in V.
\]
\end{itemize}

\end{theorem}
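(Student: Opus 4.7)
The Gårding inequality (item 2) is essentially a bookkeeping identity, while the boundedness (item 1) requires some work on the boundary integral. I would dispatch item 2 first since it is short.

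For the Gårding inequality, observe that $-\img k \int_{\partial\Omega_2} v\bar v\,\mathrm{d}s = -\img k \|v\|_{L^2(\partial\Omega_2)}^2$ is purely imaginary, so
\begin{equation*}
\mathrm{Re}\{a(v,v)\} = \|\nabla v\|_{L^2(\Omega^{\epsilon})}^2 - k^2\|v\|_{L^2(\Omega^{\epsilon})}^2.
\end{equation*}
Adding $2k^2\|v\|_{L^2(\Omega^{\epsilon})}^2$ then produces exactly $\|\nabla v\|_{L^2(\Omega^{\epsilon})}^2 + k^2\|v\|_{L^2(\Omega^{\epsilon})}^2 = \|v\|_V^2$, with equality; no inequality is lost.

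For boundedness, I would split $a(v_1,v_2)$ into its three summands. The volume terms are handled by Cauchy–Schwarz: the gradient term is controlled by $\|\nabla v_1\|_{L^2}\|\nabla v_2\|_{L^2}\leq \|v_1\|_V\|v_2\|_V$, and the $k^2$ mass term by writing $k^2\int v_1\bar v_2\,\mathrm{d}x \leq (k\|v_1\|_{L^2})(k\|v_2\|_{L^2})\leq \|v_1\|_V\|v_2\|_V$. Each therefore contributes a constant $1$, independent of $k$.

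The only delicate piece is the boundary term $\img k\int_{\partial\Omega_2}v_1\bar v_2\,\mathrm{d}s$, which after Cauchy–Schwarz on $\partial\Omega_2$ reduces to bounding $\sqrt{k}\,\|v\|_{L^2(\partial\Omega_2)}$ by $\|v\|_V$. Here I would invoke the multiplicative trace inequality on $\Omega^{\epsilon}$,
\begin{equation*}
\|v\|_{L^2(\partial\Omega_2)}^2 \leq C_T\bigl(\|v\|_{L^2(\Omega^{\epsilon})}\|\nabla v\|_{L^2(\Omega^{\epsilon})} + \|v\|_{L^2(\Omega^{\epsilon})}^2\bigr),
\end{equation*}
multiply through by $k$, and absorb both pieces into $\|v\|_V^2$. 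The cross term splits via Young's inequality as $k\|v\|_{L^2}\|\nabla v\|_{L^2}\leq \tfrac12(k^2\|v\|_{L^2}^2 + \|\nabla v\|_{L^2}^2)\leq \tfrac12\|v\|_V^2$. The residual $k\|v\|_{L^2(\Omega^{\epsilon})}^2$ requires $k\geq k_0>0$ so that $k\|v\|_{L^2}^2\leq k_0^{-1}\cdot k^2\|v\|_{L^2}^2\leq k_0^{-1}\|v\|_V^2$; this is the standard high-frequency convention in this body of literature, and the paper's stated interest in large $k$ makes the assumption harmless.

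\textbf{Main obstacle.} The boundary trace step is the only non-trivial ingredient, and the subtlety is purely the bookkeeping that keeps $C_b$ independent of $k$ — which is why the $k$-weighted norm $\|\cdot\|_V$ is set up with a $k^2$ in front of the $L^2$ mass precisely to absorb the $k$ from the impedance boundary term. Since the result is cited from \cite{MR2812565}, I would keep the argument concise and reference the multiplicative trace inequality rather than prove it from scratch.
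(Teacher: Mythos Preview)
Your argument is correct; both the G\aa rding identity and the $k$-independent boundedness via the multiplicative trace inequality are exactly the standard steps. The paper does not actually supply a proof of this theorem but merely cites \cite[Theorem~3.2 and Corollary~3.3]{MR2812565}, and your sketch reproduces precisely the argument found there (including the implicit assumption $k\geq k_0>0$ needed to absorb the residual $k\|v\|_{L^2(\Omega^{\epsilon})}^2$ into $\|v\|_V^2$).
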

The well-posedness of Problem \eqref{eqn:weakform} can be found, e.g., in \cite[Proposition 8.1.3]{melenk1995generalized}. Furthermore, there exists some constant, $\Co$, that may depend on the wavenumber $k$ and also on the perforated domain $\Omega^{\epsilon}$, such that the unique solution $u\in V$ to Problem \eqref{eqn:weakform} fulfills
\begin{align}\label{res:apriori}
\norm{u}_{V}\leq \Co \normL{f}{\Omega^{\epsilon}}.
\end{align}

Next, we introduce the dual problem to Problem \eqref{eqn:weakform}. For any $w\in L^2(\Omega^{\epsilon};\mathbb{C})$, let $z\in V$ be
\begin{align}\label{eqn:weakformDual}
a(v,z)=(v,w)_{\Omega^{\epsilon}} \quad \text{for all
} v\in V,
\end{align}
 then
\begin{align}\label{res:apriori-dual}
\norm{z}_{V}\leq \Co \normL{w}{\Omega^{\epsilon}}.
\end{align}

\subsection{Ansatz space}
Since the G\aa rding's inequality in Theorem \ref{them:sesquilinear},
combined with the approximation properties of an ansatz space,
implies the quasi-optimality of the conforming Galerkin formulation,
we now introduce the basic construction of the ansatz space.

Let $\mathcal{T}_H$ be a regular partition of the domain $\Omega$ into
finite elements with a mesh size $H$. We refer to
this partition as coarse grids, and the produced elements as the coarse elements. For each coarse element $K\in \mathcal{T}_H$, $K\cap \Omega^{\epsilon}$ is further partitioned
into a union of connected fine grid blocks. The fine-grid partition is denoted by
$\mathcal{T}_h$ with $h$ being its mesh size. Over the fine mesh $\mathcal{T}_h$, let $V_h$ be the conforming piecewise
linear finite element space:
\[
V_h:=\{v\in H^{1}(\Omega^{\epsilon}): V|_{T}\in \mathcal{P}_{1}(T) \text{ for all } T\in \mathcal{T}_h\},
\]
where $\mathcal{P}_1(T)$ denotes the space of linear polynomials on the fine element $T\in \mathcal{T}_h$. Then the fine-scale solution $u_h\in V_h$ satisfies
\begin{align}\label{eqn:weakform_h}
a(u_h,v_h)=(f,v_h)_{\Omega^{\epsilon}} \quad \text{ for all } v_h\in V_h.
\end{align}

The GMsFEM, with which our WEMsFEM shares features and builds from,  aims at solving Problem \eqref{eqn:weakform_h} on the coarse mesh $\mathcal{T}_{H}$
cheaply, whilst simultaneously maintaining a certain accuracy as compared to the fine-scale solution $u_h$. To describe the
GMsFEM, we need some notation: The vertices of $\mathcal{T}_H$
are denoted by $\{O_i\}_{i=1}^{N}$, with $N$ being the total number of coarse nodes.
The coarse neighborhood associated with the node $O_i$ is denoted by
\begin{equation} \label{neighborhood}
\omega_i:=\bigcup\{ K_j\in\mathcal{T}_H: ~~~ O_i\in \overline{K}_j\}.
\end{equation}
The overlap constant $\Cov$ is defined by
\begin{align}\label{eq:overlap}
\Cov:=\max\limits_{K\in \mathcal{T}_{H}}\#\{O_i: K\subset\omega_i \text{ for } i=1,2,\cdots,N\}.
\end{align}
We refer to Figure~\ref{schematic} for an illustration of neighborhoods and elements subordinated to the coarse
discretization $\mathcal{T}_H$. Throughout, we use $\omega_i$ to denote a coarse neighborhood.

\begin{figure}[htb]
  \centering
  \includegraphics[width=0.65\textwidth]{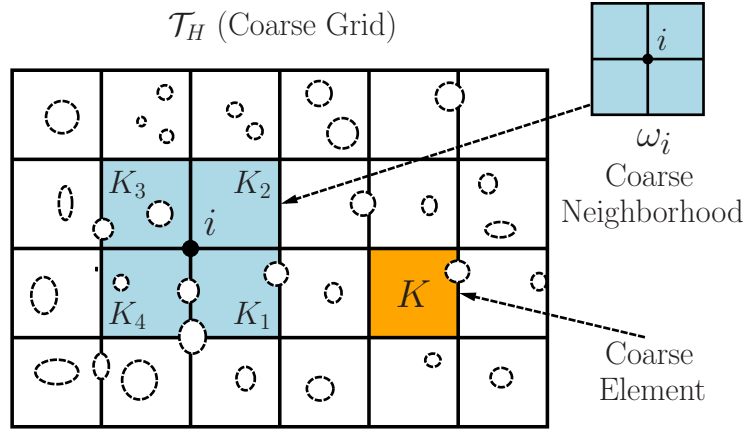}
  \caption{Illustration of a coarse neighborhood and coarse element with an overlapping constant $\Cov=4$ in a perforated domain. Here, the dashed lines denote the interfaces between the computational domain and the perforations.}
  \label{schematic}
\end{figure}

Next, we outline the GMsFEM with a conforming Galerkin (CG) formulation. We denote by $\omega_i$
the support of the multiscale basis functions. These basis functions are denoted by $\psi_j^{\omega_i}$ for
$j=1,\cdots,\ell_i$ for some $\ell_i\in \mathbb{N}_{+}$, which is the number of local basis functions associated with $\omega_i$. Throughout,
the superscript $i$ denotes the $i$-th coarse node or coarse neighborhood $\omega_i$.
Generally, the GMsFEM utilizes multiple basis functions per coarse neighborhood $\omega_i$,
and the index $j$ represents the numbering of these basis functions.
In turn, the CG multiscale solution $u_{\text{ms}}$ is sought as $u_{\text{ms}}=\sum_{i,j} c_{j}^i \psi_{j}^{\omega_i}$.
Once the basis functions $\psi_j^{\omega_i}$ are identified, the CG global coupling is given through the variational form
\begin{equation}\label{eq:globalG}
a(u_{\text{ms}},v)=(f,v)_{\Omega^{\epsilon}} \quad \text{for all} \, \, v\in
V_{\text{ms}}.
\end{equation}
Here, $V_{\text{ms}}:=\text{span}\{\psi_{j}^{\omega_i}: i=1,2,\ldots,N, j=1,2,\ldots,\ell_i\}$ denotes the ansatz space.

\section{WEMsFEM}\label{sec:wemsfem}
We now present our main multiscale method to efficiently solve Problem
\eqref{eq:model} and note that we are especially interested in the
cases where the size of the cell $\epsilon$ is very small and the
wavenumber $k$ is large. Our method is based on the edge multiscale
method proposed in \cite{fu2018edge}. The main idea is to utilize the
wavelets to approximate $u|_{\partial\omega_i}$ and then transfer this
approximation property into the error over the global domain
$\Omega^{\epsilon}$. For the completeness of the presentation, we
introduce the wavelets in the following section, the details of which
are also found, e.g., in \cite{fu2018edge}.

\subsection{Haar wavelet}
Let the level parameter and the mesh size be $\ell$ and
$h_{\ell}:=2^{-\ell}$ with $\ell\in \mathbb{N}$, respectively, then the grid points on level $\ell$ are
\[
x_{\ell,j}=j{\times} h_{\ell},\qquad 0\leq j\leq 2^{\ell}.
\]
Let the scaling function $\phi(x)$ and the mother wavelet $\psi(x)$ be given by
\begin{equation*}
\phi(x)=
\left\{
\begin{aligned}
&1, &&\text{ if } 0\leq x\leq 1,\\
&0, &&\text{ otherwise,}
\end{aligned}
\right.
\qquad
\psi(x)=
\left\{
\begin{aligned}
&1, &&\text{ if } 0\leq x\leq 1/2,\\
&-1, &&\text{ if }1/2< x\leq 1,\\
&0, &&\text{ otherwise}.
\end{aligned}
\right.
\end{equation*}
By means of dilation and translation, the mother wavelet $\psi(x)$ can
result in an orthogonal decomposition of the space $L^2(I)$ with $I:=[0,1]$. To this end, we can define the basis functions on level $\ell\geq 1$ by
\begin{align*}
\psi_{\ell,j}(x):=2^{\frac{\ell-1}{2}}\psi(2^{\ell-1}x-j) \quad \text{ for all }\quad 0\leq j\leq 2^{\ell-1}-1.
\end{align*}
The subspace of level $\ell$ is
\begin{equation*}
W_{\ell}:=
\left\{
\begin{aligned}
&\text{span}\{\phi\} &&\text{ for }\ell =0\\
&\text{span}\{\psi_{\ell,j}:\quad 0\leq j\leq 2^{\ell-1}-1\}&&\text{ for }\ell\geq 1.
\end{aligned}
\right.
\end{equation*}
 and we note that subspace $W_{\ell}$ is orthogonal to $W_{\ell'}$ in
 $L^2(I)$ for any two different levels $\ell\neq \ell'$. We denote the
 subspace in $L^2(I)$, up to level $\ell$, by $V_{\ell}^{\roma}$  defined by
\[
V_{\ell}:=\oplus_{m\leq\ell}W_{m}.
\]
The orthogonality of the subspaces $W_{\ell}$ on different levels
leads to the relation
\[
V_{\ell+1}=V_{\ell}\oplus_{L^2(I)} W_{\ell+1}
\]
and, consequently, yields the hierarchical structure of the subspace $V_{\ell}$, namely,
\[
V_{0}\subset V_{1}\subset \cdots\subset V_{\ell}\subset V_{\ell+1}\cdots
\]
Furthermore, the following orthogonal decomposition of the space $L^2(I)$ holds
\[
L^2(I)=\oplus_{\ell}W_{\ell}.
\]
Note that one can derive the hierarchical decomposition of the space
$L^2(I^d)$ for $d>1$ by means of the tensor product. The following approximation property holds \cite[Proposition 3.1]{fu2018edge}:
\begin{proposition}[Approximation properties of the hierarchical space $V_{\ell}$ in \cite{fu2018edge}]\label{prop:approx-wavelets}
Let $P_{\ell}$ be $L^2(I)$-orthogonal projection onto $V_{\ell}$ for each level $\ell\geq 0$ and let $s>0$. Then there holds
\begin{align*}
P_{\ell+1} v&=P_{\ell}v+\sum\limits_{j=0}^{2^{\ell}-1}(v,\psi_{\ell+1,j})_{I}\psi_{\ell+1,j}&\text{for all }v\in L^2(I)\\
\|v-P_{\ell}v\|_{L^2(I)}&\lesssim 2^{-s\ell}|v|_{H^s(I)} &\text{for all }v\in H^s(I).
\end{align*}
\end{proposition}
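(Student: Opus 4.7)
The plan is to prove the two displayed identities in turn. The first is essentially a restatement of the orthogonal-sum decomposition $V_{\ell+1}=V_{\ell}\oplus_{L^2(I)}W_{\ell+1}$ already established just before the proposition. First I would verify that $\{\psi_{\ell+1,j}\}_{j=0}^{2^{\ell}-1}$ is an orthonormal basis of $W_{\ell+1}$: orthogonality of distinct translates is immediate from disjointness of supports, while the normalization $\|\psi_{\ell+1,j}\|_{L^2(I)}=1$ is a direct change-of-variables check that uses the scaling factor $2^{(\ell)/2}$ in the definition of $\psi_{\ell+1,j}$ and $\int_0^1\psi^2=1$. Given the orthogonal direct sum, the $L^2$-orthogonal projection onto $V_{\ell+1}$ splits as $P_{\ell+1}=P_{\ell}+Q_{\ell+1}$, where $Q_{\ell+1}$ is the projection onto $W_{\ell+1}$; expanding $Q_{\ell+1}v$ in the orthonormal basis yields the desired formula.

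For the second estimate, the central observation is that $V_{\ell}$ coincides with the space of piecewise constant functions on the uniform dyadic partition $\{[x_{\ell,j},x_{\ell,j+1}]\}_{j=0}^{2^{\ell}-1}$ of $I$. I would establish this by induction on $\ell$, using the decomposition $V_{\ell}=V_{\ell-1}\oplus W_{\ell}$ together with the dimension count $\dim V_{\ell}=1+\sum_{m=1}^{\ell}2^{m-1}=2^{\ell}$, which matches the dimension of the piecewise-constant space at scale $h_{\ell}=2^{-\ell}$. As a consequence, $P_{\ell}v$ restricted to the subinterval $I_{\ell,j}:=[x_{\ell,j},x_{\ell,j+1}]$ is simply the integral mean of $v$ over $I_{\ell,j}$.

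With this identification in hand, the estimate reduces to a local argument on each $I_{\ell,j}$. On the reference interval $[0,1]$, the Poincaré--Wirtinger inequality together with the standard Bramble--Hilbert type argument yields $\|w-\bar{w}\|_{L^2([0,1])}\lesssim |w|_{H^s([0,1])}$ for $s\in(0,1]$; a scaling to $I_{\ell,j}$ produces $\|v-P_{\ell}v\|_{L^2(I_{\ell,j})}\lesssim h_{\ell}^{s}|v|_{H^s(I_{\ell,j})}$, because the change of variables maps the $L^2$ norm by $h_{\ell}^{1/2}$ and the $H^s$ seminorm by $h_{\ell}^{1/2-s}$. Squaring and summing over $j=0,\dots,2^{\ell}-1$ using the additivity of the $L^2$ norm and the subadditivity of the (integer or Aronszajn--Slobodeckij) $H^s$ seminorm then produces the global bound $\|v-P_{\ell}v\|_{L^2(I)}\lesssim 2^{-s\ell}|v|_{H^s(I)}$.

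The main technical hurdle is the fractional case $s\in(0,1)$, where the Poincaré estimate and its scaling must be justified for the Slobodeckij seminorm; a clean route is the fractional Poincaré inequality combined with a careful change-of-variables in the double integral defining $|\cdot|_{H^s}$. For $s>1$, no improvement beyond the piecewise-constant saturation rate $s=1$ is possible intrinsically, so the estimate is then obtained by writing $2^{-s\ell}|v|_{H^s(I)}\geq 2^{-\ell}|v|_{H^1(I)}$ on each subinterval after a local scaling, or by restricting attention to $s\in(0,1]$ as is typically meant in this wavelet context.
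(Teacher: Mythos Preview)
The paper does not actually prove this proposition; it is quoted verbatim from \cite[Proposition~3.1]{fu2018edge} without argument, so there is no in-paper proof to compare against. Your approach is the standard one and is correct for the first identity and for the approximation estimate in the range $s\in(0,1]$: the identification of $V_\ell$ with the piecewise constants on the dyadic mesh, the local mean characterisation of $P_\ell$, the scaled fractional Poincar\'e inequality, and the subadditivity $\sum_j|v|_{H^s(I_{\ell,j})}^2\le |v|_{H^s(I)}^2$ are all sound.

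Your handling of $s>1$, however, contains a genuine error. The inequality you propose, $2^{-s\ell}|v|_{H^s(I)}\geq 2^{-\ell}|v|_{H^1(I)}$, goes the wrong way: since $2^{-s\ell}<2^{-\ell}$ for $s>1$ and $\ell\ge1$, you would need $|v|_{H^s(I)}\gtrsim 2^{(s-1)\ell}|v|_{H^1(I)}$, which is false in general and vacuous on low-degree polynomials where the $H^s$ seminorm vanishes while the $H^1$ seminorm does not. In fact the stated bound is \emph{false} for $s>1$ when $|\cdot|_{H^s}$ is the usual seminorm: for $v(x)=x$ one has $|v|_{H^s(I)}=0$ for every $s>1$, yet $\|v-P_\ell v\|_{L^2(I)}\sim 2^{-\ell}\neq 0$. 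Piecewise-constant approximation saturates at order $h_\ell$, so no rate faster than $2^{-\ell}$ is attainable. Your fallback of restricting to $s\in(0,1]$ is therefore the correct resolution; the unrestricted ``$s>0$'' in the statement should be read as $0<s\le 1$, and indeed the only instance used downstream in the paper (via \cite[Eqn.~(5.6)]{fu2018edge} in the proof of Theorem~\ref{thm:proj}) corresponds to $s=1/2$.
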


\subsection{The method}
We propose our main multiscale algorithm in this section, specifically
the corresponding multiscale basis functions are defined locally on
each coarse neighborhood independently, and thereby are calculated in
parallel.  Essentially, we apply wavelets to approximate the solution
restricted on each coarse edge. To obtain conforming global basis
functions, we utilize the Partition of Unity finite element method
\cite{EFENDIEV2011937,melenk1996partition}; the main idea is to
seek local multiscale basis functions in each coarse neighborhood,
 having certain approximation properties to the exact solution restricted on each coarse neighborhood, and use the fact that the global multiscale basis functions, obtained from those local multiscale basis functions by the partition of unity functions, inherit these approximation properties.

To this end, we begin with an initial coarse space $V^{\text{init}}_0
= \text{span}\{ \chi_i \}_{i=1}^{N}$, with
 the $\chi_i$  as the standard multiscale basis functions on each
 coarse element $K\in \mathcal{T}_{H}$ defined via
\begin{alignat}{2} \label{pou}
-(\Delta +k^2)\chi_i &= 0  &&\quad\text{ in }\;\;\Omega^{\epsilon}\cap K \\
\frac{\partial \chi_i}{\partial n}&=0 &&\quad\mbox{ on }\partial Q_1^{\epsilon}\cap K\nonumber\\
\chi_i &= g_i &&\quad\text{ on }\partial K\backslash \partial Q_1^{\epsilon}. \nonumber
\end{alignat}
Here $g_i$ is affine over $\partial K$ with $g_i(O_j)=\delta_{ij}$ for
all $i,j=1,\cdots, N$ and we recall that $\{O_j\}_{j=1}^{N}$ are the set of coarse nodes on $\mathcal{T}_{H}$.

Algorithm \ref{algorithm:wavelet} proceeds as follows: We first
construct the local multiscale basis functions on each coarse
neighborhood $\omega_i$. Given level parameter $\ell\in \mathbb{N}$,
and the four coarse edges $\Gamma_{i,k}$ with $k=1,2,3,4$, i.e.,
$\cup_{k=1}^{4}\Gamma_{i,k}=\partial\omega_i$, we let $V_{\ell,k}^{i}$ be the
Haar wavelet up to level $\ell$ on the coarse edge
$\Gamma_{i,k}$. Introducing $V_{i,\ell}:=\oplus_{k=1}^{4}V_{\ell,k}^{i}$
to be the edge basis functions on $\partial\omega_i$, then $V_{i,\ell}$ becomes a good approximation space of dimension $2^{\ell+2}$ to the trace of the solution over $\partial \omega_i$, i.e., $u|_{\partial\omega_i}$.

Subsequently, we calculate the local multicale basis functions on each coarse neighborhood $\omega_i$ with all possible Dirichlet boundary conditions in $V_{i,\ell}$, and denote the resulting local multiscale space as $\mathcal{L}^{-1}_i (V_{i,\ell})$ in Step 2. We can then define the global multiscale space as $V_{\text{ms},\ell}^{\text{EW}}$ and obtain the multiscale solution $u_{\text{ms},\ell}^{\text{EW}}$ in Steps 3 and 4.

\begin{algorithm}[H]
\caption{Wavelet-based Edge Multiscale Finite Element Method (WEMsFEM)}
\label{algorithm:wavelet}
 \begin{tabular}{l l}
\specialrule{.2em}{.1em}{.1em}
\textbf{Input}:&The level parameter $\ell\in \mathbb{N}$; coarse neighborhood $\omega_i$ and its four coarse edges $\Gamma_{i,k}$ with \\&
    $k=1,2,3,4$, i.e., $\cup_{k=1}^{4}\Gamma_{i,k}=\partial\omega_i$;
    the subspace $V_{\ell,k}^i\subset L^2(\Gamma_{i,k})$ up to level $\ell$ on each\\& coarse edge $\Gamma_{i,k}$;
    \\
    \textbf{Output}:& Multiscale solution $u_{\text{ms},\ell}^{\text{EW}}$.\\
\specialrule{.2em}{.1em}{.1em}
1. & Denote
    $V_{i,\ell}:=\oplus_{k=1}^{4}V_{\ell,k}.$
    Then the number of basis functions in $V_{i,\ell}$ is $4\times 2^{\ell}=2^{\ell+2}$. \\&
    Denote these basis
    functions as $v_k$ for $k=1,\cdots, 2^{\ell+2}$.\\
2. &Calculate local multiscale basis $\mathcal{L}^{-1}_i (v_k)$ for all $k=1,\cdots,2^{\ell+2}$.\\
&$\mathcal{L}^{-1}_i (v_k):=v$ satisfies:\\
& $
  \left\{ \begin{aligned}
          \mathcal{L}_i v&:=\Delta v+k^2 v=0&&\mbox{in }\Omega^{\epsilon}\cap\omega_i\\
          \frac{\partial v}{\partial n}&=0 &&\mbox{ on }\partial Q_1^{\epsilon}\cap \omega_i\\
          v&=v_k &&\mbox{on }\partial\omega_i\backslash \partial Q_1^{\epsilon}.
  \end{aligned}\right.
$\\
&Calculate one local solution $v^i$ defined by the solution to the local problem:\\
&$
\left\{
\begin{aligned}
 -(\Delta+k^2)v^{i}&=1 \quad&&\text{ in } \Omega^{\epsilon}\cap\omega_i\\
\frac{\partial v^{i}}{\partial n}&=0 &&\mbox{ on }\partial Q_1^{\epsilon}\cap \omega_i\\
v^{i}&=0\quad&&\text{ on }\partial\omega_i\backslash \partial Q_1^{\epsilon}.
\end{aligned}
\right.
$\\
3. &Build the ansatz space. \\&
$
V_{\text{ms},\ell}^{\rm EW}  := \text{span} \{\chi_i\mathcal{L}^{-1}_i(v_k), \chi_i v^{i}: \,  \, 1 \leq i \leq N,\,\,\, 1 \leq k \leq  2^{\ell+2}\}.
$\\
4. &Solve for \eqref{eq:globalG} by the Conforming Galerkin method in
$V_{\text{ms},\ell}^{\rm EW}$ to obtain $u_{\text{ms},\ell}^{\text{EW}}$.\\[0cm]
\specialrule{.2em}{.1em}{.1em}
\end{tabular}
\end{algorithm}
Note that all the global multiscale basis functions in $V_{\text{ms},\ell}^{\rm EW}$ fulfill the interface condition, i.e. they satisfy the Neumman boundary condition on every interface of every inclusion, as in the second equation in \eqref{eq:model}. This is due to the construction of the partition of unity functions $\chi_i$ in \eqref{pou}.

\subsection{Local projection}
An important element of our algorithm is the knowledge of the approximation properties to the edge basis functions $V_{i,\ell}:=\oplus_{k=1}^{4}V_{\ell,k}$ on each coarse neighborhood $\partial\omega_i$.

Let the $L^2(\partial\omega_i)$-orthogonal projection $\mathcal{P}_{i,\ell}$ onto the local multiscale space up to level $\ell$: $L^2(\partial\omega_i)\to V_{i,\ell}$ be defined by
\begin{align}\label{eq:projectionEDGE}
\mathcal{P}_{i,\ell}(v):=\sum\limits_{j=1}^{2^{\ell+2}}(v,\psi_{j})_{\partial\omega_i}\mathcal{L}_{i}^{-1}(\psi_{j}) \quad \text{ for all }v\in L^2(\partial\omega_i).
\end{align}
Here, we denote $\psi_{j}$ for $j=1,\cdots 2^{\ell+2}$ as the Haar wavelet defined on the four edges of $\omega_i$ of level $\ell$ and the local operator $\mathcal{L}_i$ is defined in Algorithm \ref{algorithm:wavelet}.

Let $\text{diam}(\Omega^{\epsilon})$ be the diameter of the bounded domain $\Omega^{\epsilon}$. Define
\begin{align*}
\Cpoin{\omega_i}&:=H^{-2}\max\limits_{w\in H^1_0(\Omega^{\epsilon}\cap\omega_i)}\frac{\int_{\Omega^{\epsilon}\cap\omega_i}w^2\dx}
{\int_{\Omega^{\epsilon}\cap\omega_i}|\nabla w|^2\dx},
\\
\Cpoin{\Omega^{\epsilon}}&:=\text{diam}(\Omega^{\epsilon})^{-2}\max\limits_{w\in H^1_0(\Omega^{\epsilon})}\frac{\int_{\Omega^{\epsilon}}w^2\dx}{\int_{\Omega^{\epsilon}}|\nabla w|^2\dx}.
\end{align*}
Then the positive constants $\Cpoin{\omega_i}$ and
$\Cpoin{\Omega^{\epsilon}}$ are independent of the wavenumber $k$ and
the coarse mesh $\mathcal{T}_{H}$. Note that we will utilize the same
constant $\Cpoin{\omega_i}$ to denote the constant from the Poincar\'{e} inequality.

\begin{assumption}[Scale Resolution Assumption]\label{ass:resolution}
We assume that the coarse mesh size $H$ is sufficiently small to satisfy the following inequality
\begin{align*}
\max_{i=1,2,\ldots,N}\{\Cpoinn{\omega_i}{1/2}\}Hk<1.
\end{align*}
\end{assumption}
To simplify the notation, we denote
\[
\Ce:=\Big(1-\max_{i=1,2,\ldots,N}\{\Cpoin{\omega_i}\}(Hk)^2\Big)^{-1}.
\]
\begin{remark}
Similar resolution assumption as Assumption \ref{ass:resolution} is commonly seen in the literature, e.g.,  \cite{MR2812565}. If we take the coarse scale mesh grid $H\ll k^{-1}$, then $\Ce\approx 1$.
\end{remark}

\section{Convergence rate of Algorithm \ref{algorithm:wavelet}}\label{sec:convergence}
The convergence rate of this algorithm is clearly an important detail
and, perhaps remarkably, this can be obtained. The proof is inspired
by the techniques developed in \cite{fu2018edge,GL18}, where a local
decomposition of the solution $u$ restricted on each coarse
neighborhood $\omega_i$ for $i=1,2,\ldots,N$, namely, $u|_{\omega_i}$,
is introduced. We also analyze the local approximation properties of the multiscale basis functions constructed in Step 2, Algorithm \ref{algorithm:wavelet} to each component of the decomposition of $u|_{\omega_i}$ in Section \ref{subsec:local-decmp}. Subsequently, the global approximation properties of the ansatz space $V_{\text{ms},\ell}^{\rm EW}$ and the convergence of Algorithm \ref{algorithm:wavelet} are investigated in Section \ref{subsec:ansatz}.


\subsection{Local decomposition of the solution}\label{subsec:local-decmp}

The solution $u$ satisfies the following equation
\begin{equation*}
\begin{aligned}
-(\Delta +k^2)u&=f \quad&&\text{ in } \Omega^{\epsilon}\cap\omega_i,
\end{aligned}
\end{equation*}
which can be split into three parts, namely
\begin{align}\label{eq:decomp1}
u|_{\omega_i}=u^{i,\roma}+u^{i,\romb}+u^{i,\romc}.
\end{align}
Here, the three components $u^{i,\roma}$, $u^{i,\romb}$ and $u^{i,\romc}$ are respectively given by
\begin{equation}\label{eq:u-roma1conv}
\left\{
\begin{aligned}
-(\Delta +k^2) u^{i,\roma}&=f-\dashint_{\omega_i}f \quad&&\text{ in } \Omega^{\epsilon}\cap\omega_i\\
\frac{\partial u^{i,\roma}}{\partial n}&=0 &&\mbox{ on }\partial Q_1^{\epsilon}\cap \omega_i\\
u^{i,\roma}&=0\quad&&\text{ on }\partial \omega_i\backslash \partial Q_1^{\epsilon},
\end{aligned}
\right.
\end{equation}
\begin{equation*}
\left\{
\begin{aligned}
-(\Delta +k^2) u^{i,\romb}&=0 \quad&&\text{ in } \Omega^{\epsilon}\cap\omega_i\\
\frac{\partial u^{i,\romb}}{\partial n}&=0 &&\mbox{ on }\partial Q_1^{\epsilon}\cap \omega_i\\
u^{i,\romb}&=u\quad&&\text{ on }\partial \omega_i\backslash \partial Q_1^{\epsilon},
\end{aligned}
\right.
\end{equation*}
and
\[
u^{i,\romc}=v^{i}\dashint_{\omega_i}f.
\]
Here, $\dashint_{\omega_i}v:=|\omega_i|^{-1}\int_{\omega_i}v\dx$ denotes the
average of the function $v\in L^1(\omega_i)$ over each coarse neighborhood $\omega_i$. Recall that $v^{i}$ is defined in Step 2 of Algorithm \ref{algorithm:wavelet}.

We first show that $u^{i,\roma}$ is negligible thanks to the local basis function $v^{i}$:
\begin{lemma}\label{lem:parta}
Let the Scale Resolution Assumption \ref{ass:resolution} be valid. Let $u^{i,\roma}$ be defined in \eqref{eq:u-roma1conv} and let $f\in L^2(\Omega^{\epsilon})$. Then there holds
\begin{align*}
\|{u^{i,\roma}}\|_{V(\Omega^{\epsilon}\cap\omega_i)}\leq 2\Ce\Cpoinn{\omega_i}{1/2}H\normL{f}{\Omega^{\epsilon}\cap\omega_i}.
\end{align*}
\end{lemma}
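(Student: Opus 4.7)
The plan is a direct energy estimate applied to the local problem \eqref{eq:u-roma1conv}, combined with the Scale Resolution Assumption to handle the indefinite Helmholtz contribution. First I would test the PDE satisfied by $u^{i,\roma}$ with $\bar{u}^{i,\roma}$ itself. Integration by parts on $\Omega^{\epsilon}\cap\omega_i$ produces boundary terms on $\partial Q_1^{\epsilon}\cap\omega_i$, which vanish by the Neumann condition, and on $\partial\omega_i\setminus\partial Q_1^{\epsilon}$, which vanish because $u^{i,\roma}=0$ there. This yields the identity
\begin{align*}
\|\nabla u^{i,\roma}\|_{L^2(\Omega^{\epsilon}\cap\omega_i)}^2 - k^2\|u^{i,\roma}\|_{L^2(\Omega^{\epsilon}\cap\omega_i)}^2
= \int_{\Omega^{\epsilon}\cap\omega_i}\bigl(f-\dashint_{\omega_i}f\bigr)\,\bar{u}^{i,\roma}\dx.
\end{align*}

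Next I would exploit the fact that the Dirichlet part of the boundary has positive measure to apply the Poincar\'e inequality in the form $\|u^{i,\roma}\|_{L^2(\Omega^{\epsilon}\cap\omega_i)}\leq \Cpoinn{\omega_i}{1/2}H\,\|\nabla u^{i,\roma}\|_{L^2(\Omega^{\epsilon}\cap\omega_i)}$, as encoded in the definition of $\Cpoin{\omega_i}$. This lets me turn the indefinite LHS into a coercive one: under Assumption \ref{ass:resolution}, $\Cpoin{\omega_i}(Hk)^2<1$, hence
\begin{align*}
\|\nabla u^{i,\roma}\|^2 - k^2\|u^{i,\roma}\|^2 \geq \bigl(1-\Cpoin{\omega_i}(Hk)^2\bigr)\|\nabla u^{i,\roma}\|^2 \geq \Ce^{-1}\|\nabla u^{i,\roma}\|^2.
\end{align*}
On the right-hand side of the tested identity, I would use $\|f-\dashint_{\omega_i}f\|_{L^2}\leq \|f\|_{L^2}$ (best $L^2$ approximation by constants) together with Cauchy--Schwarz and the same Poincar\'e inequality, obtaining the bound $\Cpoinn{\omega_i}{1/2}H\,\|f\|_{L^2(\Omega^{\epsilon}\cap\omega_i)}\|\nabla u^{i,\roma}\|_{L^2}$. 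Dividing through gives
\begin{align*}
\|\nabla u^{i,\roma}\|_{L^2(\Omega^{\epsilon}\cap\omega_i)}\leq \Ce\Cpoinn{\omega_i}{1/2}H\,\|f\|_{L^2(\Omega^{\epsilon}\cap\omega_i)}.
\end{align*}

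Finally, I would convert this gradient bound into the $V$-norm bound requested in the lemma. By the Poincar\'e inequality once more,
\begin{align*}
\|u^{i,\roma}\|_{V(\Omega^{\epsilon}\cap\omega_i)}^2 = \|\nabla u^{i,\roma}\|^2 + k^2\|u^{i,\roma}\|^2 \leq \bigl(1+\Cpoin{\omega_i}(Hk)^2\bigr)\|\nabla u^{i,\roma}\|^2 < 2\|\nabla u^{i,\roma}\|^2,
\end{align*}
using the Scale Resolution Assumption in the last step. Combining with the gradient estimate yields $\|u^{i,\roma}\|_{V(\Omega^{\epsilon}\cap\omega_i)}\leq \sqrt{2}\,\Ce\Cpoinn{\omega_i}{1/2}H\,\|f\|_{L^2(\Omega^{\epsilon}\cap\omega_i)}\leq 2\Ce\Cpoinn{\omega_i}{1/2}H\,\|f\|_{L^2(\Omega^{\epsilon}\cap\omega_i)}$, which is exactly the claim.

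The only real subtlety is the sign-indefiniteness of the Helmholtz term, and this is precisely what the Scale Resolution Assumption is designed to neutralize via the constant $\Ce$; once that is handled, the rest is a clean Lax--Milgram-style energy argument. Subtracting the mean $\dashint_{\omega_i}f$ is convenient but not strictly necessary here (we bound it trivially by $\|f\|$); its role becomes essential in the companion decomposition where the averaged part is absorbed into $u^{i,\romc}=v^{i}\dashint_{\omega_i}f$.
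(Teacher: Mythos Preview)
Your proof is correct and follows essentially the same route as the paper: test with $u^{i,\roma}$, use Poincar\'e plus the Scale Resolution Assumption to absorb the indefinite $k^2$-term and obtain the gradient bound, then apply Poincar\'e once more to pass to the $V$-norm. The only cosmetic difference is that the paper rewrites $\int(f-\dashint f)\,u^{i,\roma}=\int f\,(u^{i,\roma}-\dashint u^{i,\roma})$ before applying H\"older and Poincar\'e, whereas you bound $\|f-\dashint f\|\leq\|f\|$ and apply Poincar\'e to $u^{i,\roma}$ directly; both yield the identical estimate.
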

\begin{proof}
Multiplying \eqref{eq:u-roma1conv} by $u^{i,\roma}$, and taking its integral over $\omega_i$, leads to
\begin{align*}
\normHsemi{u^{i,\roma}}{\Omega^{\epsilon}\cap\omega_i}^2
&=k^2\normL{u^{i,\roma}}{\Omega^{\epsilon}\cap\omega_i}^2+\int_{\Omega^{\epsilon}\cap\omega_i}(f-\dashint_{\Omega^{\epsilon}\cap\omega_i}f) u^{i,\roma}\dx\\
&=k^2\normL{u^{i,\roma}}{\Omega^{\epsilon}\cap\omega_i}^2+\int_{\Omega^{\epsilon}\cap\omega_i}f(u^{i,\roma}-\dashint_{\Omega^{\epsilon}\cap\omega_i}u)\dx
\end{align*}
wherein an application of  H\"{o}lder's inequality and the Poincar\'{e} inequality proves
\begin{align*}
\normHsemi{u^{i,\roma}}{\Omega^{\epsilon}\cap\omega_i}^2
\leq (\Cpoinn{\omega_i}{1/2}Hk)^2\normHsemi{u^{i,\roma}}{\Omega^{\epsilon}\cap\omega_i}^2+\Cpoinn{\omega_i}{1/2}H\normL{f}{\Omega^{\epsilon}\cap\omega_i}
\normHsemi{u^{i,\roma}}{\Omega^{\epsilon}\cap\omega_i}.
\end{align*}
After moving the first term on the right of the previous estimate to the left, and noting the Scale Resolution Assumption \ref{ass:resolution}, we obtain
\begin{align*}
\normHsemi{u^{i,\roma}}{\Omega^{\epsilon}\cap\omega_i}\leq \Ce\Cpoinn{\omega_i}{1/2}H\normL{f}{\Omega^{\epsilon}\cap\omega_i}.
\end{align*}
Finally, an application of the Poincar\'{e} inequality and the Scale Resolution Assumption \ref{ass:resolution} shows the desired assertion.
\end{proof}
Recalling that $z\in V$ is the solution to Problem \eqref{eqn:weakformDual} for given $w\in L^2(\Omega^{\epsilon};\mathbb{C})$, analogously to Decomposition \eqref{eq:decomp1}, the following decomposition is valid:
\begin{align}\label{eq:decompdual}
z|_{\omega_i}=z^{i,\roma}+z^{i,\romb}+z^{i,\romc}.
\end{align}
Then a similar argument as in Lemma \ref{lem:parta} leads to the following estimate:
\begin{lemma}\label{lem:parta-dual}
Let the Scale Resolution Assumption \ref{ass:resolution} hold. Let
$z^{i,\roma}$ be defined in \eqref{eq:decompdual} and let $w\in
L^2(\Omega^{\epsilon})$. Then it holds that
\begin{align*}
\|{z^{i,\roma}}\|_{V(\Omega^{\epsilon}\cap\omega_i)}\leq 2\Ce\Cpoinn{\omega_i}{1/2}H\normL{w}{\Omega^{\epsilon}\cap\omega_i}.
\end{align*}
\end{lemma}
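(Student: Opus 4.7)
The plan is to mirror the proof of Lemma \ref{lem:parta} almost verbatim: the dual problem \eqref{eqn:weakformDual} is simply the primal problem with $u$ replaced by $z$ and $f$ replaced by $w$, and the localization in \eqref{eq:decompdual} produces a triple $(z^{i,\roma},z^{i,\romb},z^{i,\romc})$ that satisfies an analogous system. First I would unpack \eqref{eqn:weakformDual} via integration by parts to see that $z$ obeys $-(\Delta+k^{2})z=w$ in $\Omega^{\epsilon}$, with the usual Neumann condition on the perforation boundaries and a Robin condition of the opposite sign on $\partial\Omega_{2}$. When we localize to $\omega_{i}$, however, this outer Robin condition never enters: $z^{i,\roma}$ is pinned to $0$ on $\partial\omega_{i}\setminus\partial Q_{1}^{\epsilon}$, satisfies $\partial_{n}z^{i,\roma}=0$ on $\partial Q_{1}^{\epsilon}\cap\omega_{i}$, and solves $-(\Delta+k^{2})z^{i,\roma}=w-\dashint_{\omega_{i}}w$ inside $\Omega^{\epsilon}\cap\omega_{i}$. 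This is a literal copy of \eqref{eq:u-roma1conv} with $(u,f)$ replaced by $(z,w)$.

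Next I would test this equation against $\overline{z^{i,\roma}}$ and integrate by parts over $\Omega^{\epsilon}\cap\omega_{i}$. Both boundary contributions vanish (Dirichlet on $\partial\omega_{i}\setminus\partial Q_{1}^{\epsilon}$, Neumann on $\partial Q_{1}^{\epsilon}\cap\omega_{i}$). Taking real parts and using that $k$ is real yields
\[
|z^{i,\roma}|_{H^{1}(\Omega^{\epsilon}\cap\omega_{i})}^{2}
= k^{2}\|z^{i,\roma}\|_{L^{2}(\Omega^{\epsilon}\cap\omega_{i})}^{2}
+ \mathrm{Re}\int_{\Omega^{\epsilon}\cap\omega_{i}}\bigl(w-\dashint_{\omega_{i}}w\bigr)\overline{z^{i,\roma}}\dx.
\]
Since $\int_{\omega_{i}}(w-\dashint_{\omega_{i}}w)=0$, I may replace $\overline{z^{i,\roma}}$ by $\overline{z^{i,\roma}-\dashint_{\omega_{i}}z^{i,\roma}}$ in the last integral at no cost. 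Cauchy--Schwarz then bounds this term by $\|w\|_{L^{2}}\,\|z^{i,\roma}-\dashint_{\omega_{i}}z^{i,\roma}\|_{L^{2}}$, and the Poincar\'{e}-type inequality (with constant $\Cpoinn{\omega_{i}}{1/2}H$) further controls the latter by $\Cpoinn{\omega_{i}}{1/2}H\,|z^{i,\roma}|_{H^{1}}$.

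Finally I would dispatch the $k^{2}\|z^{i,\roma}\|_{L^{2}}^{2}$ term using the same Poincar\'{e} inequality to majorise it by $(\Cpoinn{\omega_{i}}{1/2}Hk)^{2}|z^{i,\roma}|_{H^{1}}^{2}$, absorb this to the left-hand side via the Scale Resolution Assumption \ref{ass:resolution}, and recognise the resulting prefactor as $\Ce$. That gives $|z^{i,\roma}|_{H^{1}}\leq \Ce\Cpoinn{\omega_{i}}{1/2}H\,\|w\|_{L^{2}}$, after which one last use of the Poincar\'{e} inequality converts the $H^{1}$-seminorm estimate into the full $V$-norm estimate (combining the gradient part and the $k\|\cdot\|_{L^{2}}$ part), producing the stated constant $2\Ce\Cpoinn{\omega_{i}}{1/2}H$. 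The only subtlety relative to Lemma \ref{lem:parta} is the complex conjugation bookkeeping, but because $k\in\mathbb{R}$ and no Robin contribution survives in the localized problem for $z^{i,\roma}$, all imaginary cross-terms vanish and the real-valued argument of Lemma \ref{lem:parta} transfers line by line; in that sense there is no genuine obstacle.
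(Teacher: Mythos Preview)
Your proposal is correct and follows exactly the approach the paper intends: the paper itself does not give a separate proof but simply states that ``a similar argument as in Lemma \ref{lem:parta}'' yields the result, and your write-up is precisely that argument transplanted to the dual problem with $(u,f)$ replaced by $(z,w)$. Your extra care about complex conjugation and the irrelevance of the outer Robin condition in the localized problem is accurate and does not deviate from the paper's route.
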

\begin{remark}[On the decomposition \eqref{eq:decomp1} and \eqref{eq:decompdual}]
Following from Lemmas \ref{lem:parta} and \ref{lem:parta-dual}, only
one multiscale interial basis function in Step 2 of Algorithm
\ref{algorithm:wavelet} is sufficient if $\mathcal{O}(H)$ convergence
rate is desired or sufficient. Otherwise, one can construct extra local multiscale basis functions to approximate the first component $u^{i,\roma}$.

\end{remark}
Since $u^{i,\romc}$ is of rank-one, which can be represented with one multiscale basis function $v^i$ in Decomposition
\eqref{eq:decomp1}. Lemma \ref{lem:parta} indicates that we need only
  construct a proper ansatz space for the second part $u^{i,\romb}$
  to ensure a good ansatz space for $u|_{\omega_i}$. We now prove that the multiscale basis functions, constructed in Step 2 of Algorithm \ref{algorithm:wavelet}, span an appropriate ansatz space with good approximation properties:
\begin{theorem}[Approximation properties of the projection $\mathcal{P}_{i,\ell}$] \label{thm:proj}
Let Assumption \ref{ass:resolution} hold and let $e\in V(\Omega^{\epsilon}\cap\omega_i)$ satisfy
\begin{equation}\label{eq:pde-approx}
\left\{
\begin{aligned}
\mathcal{L}_i e&:=\Delta e+k^2 e=0&&\mbox{in }\Omega^{\epsilon}\cap\omega_i\\
\frac{\partial e}{\partial n}&=0 &&\mbox{ on }\partial Q_1^{\epsilon}\cap \omega_i\\
 e&=u^{i,\romb}-\mathcal{P}_{i,\ell}(u^{i,\romb})&&\mbox{on }\partial\omega_i\backslash \partial Q_1^{\epsilon}.
\end{aligned}
\right.
\end{equation}
Then it holds that
\begin{align*}
\normL{e}{\Omega^{\epsilon}\cap\omega_i}&\leq\Cw \Ce2^{-\ell/2}H\Big(\|u\|_{H^1( \Omega^{\epsilon}\cap \omega_i)}+\Ce\Cpoinn{\omega_i}{1/2}\normL{f}{\Omega^{\epsilon}\cap\omega_i}\Big)\\
\normL{\nabla(\chi_i\locv{e}{}{})}{\Omega^{\epsilon}\cap\omega_i}
&\leq\Cw\Ce 2^{-\ell/2}\Big(\|u\|_{H^1( \Omega^{\epsilon}\cap\omega_i)}+\Ce\Cpoinn{\omega_i}{1/2}\normL{f}{\Omega^{\epsilon}\cap\omega_i}\Big).
\end{align*}
\end{theorem}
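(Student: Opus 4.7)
The plan is to estimate $e$ through a three-layer argument: (a) a wavelet approximation estimate on $\partial\omega_i$ for the Dirichlet datum of $e$; (b) a trace inequality that expresses that datum in terms of a full $H^1$-norm of $u^{i,\romb}$ on $\Omega^\epsilon \cap \omega_i$; and (c) an interior Helmholtz estimate that lifts the boundary bound into an $L^2$-bound (with one extra power of $H$) and into an $H^1$-bound on $e$. The decomposition $u|_{\omega_i} = u^{i,\roma} + u^{i,\romb} + u^{i,\romc}$ together with Lemma \ref{lem:parta} is then used to rewrite $\|u^{i,\romb}\|_{H^1}$ in terms of $\|u\|_{H^1}$ and $\|f\|_{L^2}$.

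Concretely, denote $g := (u^{i,\romb} - \mathcal{P}_{i,\ell}(u^{i,\romb}))|_{\partial\omega_i\setminus\partial Q_1^{\epsilon}}$. Since $\mathcal{P}_{i,\ell}$ is the Helmholtz-harmonic extension of the $L^2(\partial\omega_i)$-orthogonal projection onto $V_{i,\ell}$, the boundary trace of $g$ is precisely the wavelet residual of $u^{i,\romb}$ on $\partial\omega_i$. Applying Proposition \ref{prop:approx-wavelets} with smoothness index $s=1/2$ on each of the four coarse edges (rescaling to the reference interval and using the scale-invariance of the $H^{1/2}$-seminorm) gives the edgewise bound $\|g\|_{L^2(\partial\omega_i)} \lesssim 2^{-\ell/2} H^{1/2}\,|u^{i,\romb}|_{H^{1/2}(\partial\omega_i)}$. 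The Lipschitz trace inequality on $\Omega^\epsilon \cap \omega_i$ then yields $|u^{i,\romb}|_{H^{1/2}(\partial\omega_i)} \lesssim \|u^{i,\romb}\|_{H^1(\Omega^\epsilon\cap\omega_i)}$, and writing $u^{i,\romb} = u - u^{i,\roma} - u^{i,\romc}$ together with Lemma \ref{lem:parta} (applied with right-hand side $f$) and a parallel estimate for $u^{i,\romc} = v^i \dashint_{\omega_i} f$ (which uses Lemma \ref{lem:parta} with source $\mathbf{1}$, followed by $|\dashint_{\omega_i} f| \lesssim H^{-1}\|f\|_{L^2(\omega_i)}$) produces $\|u^{i,\romb}\|_{H^1(\Omega^\epsilon\cap\omega_i)} \lesssim \|u\|_{H^1(\Omega^\epsilon\cap\omega_i)} + \Ce\,\Cpoinn{\omega_i}{1/2}\,\|f\|_{L^2(\Omega^\epsilon\cap\omega_i)}$.

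The core step is (c): lifting $\|g\|_{L^2(\partial\omega_i)}$ to estimates on $e$ in the interior. I pick a stable lifting $E_g \in H^1(\Omega^\epsilon\cap\omega_i)$ of $g$ that vanishes on $\partial Q_1^\epsilon\cap\omega_i$ and satisfies the scaled bounds $\|\nabla E_g\|_{L^2(\omega_i)} \lesssim H^{-1/2}\|g\|_{L^2(\partial\omega_i)}$ and $\|E_g\|_{L^2(\omega_i)} \lesssim H^{1/2}\|g\|_{L^2(\partial\omega_i)}$. Then $e - E_g$ has homogeneous Dirichlet trace, and testing the weak form of $(\Delta+k^2)(e-E_g) = -(\Delta+k^2) E_g$ against $e - E_g$ itself, together with the Scale Resolution Assumption \ref{ass:resolution} absorbing the indefinite $k^2$-contribution (exactly as in the proof of Lemma \ref{lem:parta}), gives $\|\nabla e\|_{L^2(\omega_i)} \lesssim H^{-1/2}\|g\|_{L^2(\partial\omega_i)}$ and $\|e\|_{L^2(\omega_i)} \lesssim H^{1/2}\|g\|_{L^2(\partial\omega_i)}$, with the factor $\Ce$ coming from this absorption. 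Assembling the three steps and applying the product rule $\nabla(\chi_i e) = \chi_i \nabla e + e\,\nabla\chi_i$ with $\|\chi_i\|_\infty \le 1$ and $\|\nabla\chi_i\|_\infty \lesssim H^{-1}$ delivers the two claimed estimates.

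The hard part will be step (c): producing a stable extension compatible with the Neumann condition on $\partial Q_1^\epsilon\cap\omega_i$ and getting the correct $H$-scaling in both the $L^2$- and gradient-bounds. The indefinite Helmholtz term must be swallowed by the elliptic part using the resolution assumption, and this is also where the constant $\Ce$ in the final bound is picked up; all the remaining (edge-wavelet, trace, and Poincar\'e) constants are packaged into $\Cw$.
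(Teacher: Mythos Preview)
Your steps (a) and (b) are sound and track the paper closely; the paper handles (b) more economically by setting $r:=u-u^{i,\romb}$, which solves $-(\Delta+k^2)r=f$ with zero Dirichlet data on $\partial\omega_i\setminus\partial Q_1^\epsilon$, and bounding $|r|_{H^1}$ in one shot, rather than splitting off $u^{i,\roma}$ and $u^{i,\romc}$ separately.

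The genuine gap is in step (c). The datum $g=u^{i,\romb}|_{\partial\omega_i}-\mathcal P_{i,\ell}(u^{i,\romb})$ is the difference of an $H^{1/2}$ trace and a piecewise-constant Haar projection; it carries the jumps of the projection and lies only in $L^2(\partial\omega_i)$, not in $H^{1/2}(\partial\omega_i)$. Consequently no $H^1$-lifting $E_g$ with trace $g$ exists in general, and the scaling $\|\nabla E_g\|_{L^2}\lesssim H^{-1/2}\|g\|_{L^2}$ you postulate cannot hold (it would furnish a bounded right inverse of the trace map from $L^2$ into $H^1$). For the same reason $\|\nabla e\|_{L^2(\omega_i)}$ need not even be finite; only the cut-off quantity $\|\chi_i\nabla e\|$ (and hence $\|\nabla(\chi_i e)\|$) is controlled, since $\chi_i$ vanishes on the portion of $\partial\omega_i$ carrying the rough data. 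The paper circumvents this via a very-weak (transposition) argument, Theorem~\ref{lem:very-weak}: the bound $\|e\|_{L^2}\lesssim\Ce H^{1/2}\|g\|_{L^2}$ is obtained by duality, testing against the solution $z$ of $\mathcal L_i z=w$ with homogeneous boundary data and controlling $\|\partial z/\partial n\|_{L^2(\partial\omega_i)}$ through an $H^{3/2}$-regularity estimate for $z$ (this is where smoothness of $\partial Q_1^\epsilon$ enters). The weighted gradient bound then follows from a Caccioppoli step: multiply $\mathcal L_i e=0$ by $\chi_i^2 e$, integrate by parts (no boundary term since $\chi_i=0$ on $\partial\omega_i$), and feed in the $L^2$-bound. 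Your lifting-plus-absorption idea is the right picture for $H^{1/2}$ data, but for merely $L^2$ Dirichlet data the duality route is essential.
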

\begin{proof}
We can obtain from Theorem \ref{lem:very-weak}:
\begin{align*}
\normL{e}{\Omega^{\epsilon}\cap\omega_i}&\leq\Cw \Ce H^{1/2}\|u^{i,\romb}-\mathcal{P}_{i,\ell}(u^{i,\romb})\|_{\partial\omega_i\backslash \partial Q_1^{\epsilon}}\\
\normL{\nabla(\chi_i\locv{e}{}{})}{\Omega^{\epsilon}\cap\omega_i}
&\leq\Cw\Ce H^{-1/2}\|u^{i,\romb}-\mathcal{P}_{i,\ell}(u^{i,\romb})\|_{\partial\omega_i\backslash \partial Q_1^{\epsilon}}.
\end{align*}
Whereas an application of \cite[Eqn. (5.6)]{fu2018edge} leads to
\begin{equation}\label{eq:11111}
\begin{aligned}
\normL{e}{\Omega^{\epsilon}\cap\omega_i}&\leq\Cw \Ce 2^{-\ell/2}H\normHsemi{u^{i,\romb}}{\Omega^{\epsilon}\cap\omega_i}\\
\normL{\nabla(\chi_i\locv{e}{}{})}{\Omega^{\epsilon}\cap\omega_i}
&\leq\Cw\Ce 2^{-\ell/2}\normHsemi{u^{i,\romb}}{\Omega^{\epsilon}\cap\omega_i}.
\end{aligned}
\end{equation}
We only need to estimate $\normHsemi{u^{i,\romb}}{\omega_i}$. Note that $r:=u-u^{i,\romb}$ satisfies
\begin{equation*}
\left\{
\begin{aligned}
\Delta r+k^2 r&=f\quad&&\text{ in } \Omega^{\epsilon}\cap\omega_i\\
\frac{\partial r}{\partial n}&=0 &&\mbox{ on }\partial Q_1^{\epsilon}\cap \omega_i\\
r&=0\quad&&\text{ on }\partial \omega_i\backslash\partial Q_1^{\epsilon}.
\end{aligned}
\right.
\end{equation*}
A similar estimate as used in the proof to Lemma \ref{lem:parta} shows
that
\[
\normHsemi{r}{\Omega^{\epsilon}\cap\omega_i}\leq \Ce \Cpoinn{\omega_i}{1/2}\normL{f}{\Omega^{\epsilon}\cap\omega_i}.
\]
Finally, an application of the triangle inequality proves
\begin{align*}
\normHsemi{u^{i,\romb}}{\Omega^{\epsilon}\cap\omega_i}&\leq \normHsemi{u}{\Omega^{\epsilon}\cap\omega_i}+\normHsemi{r}{\Omega^{\epsilon}\cap\omega_i}\\
&\leq \normHsemi{u}{\Omega^{\epsilon}\cap\omega_i}+\Ce \Cpoinn{\omega_i}{1/2}\normL{f}{\Omega^{\epsilon}\cap\omega_i}
\end{align*}
and this, together with \eqref{eq:11111}, proves the desired assertion.
\end{proof}

\subsection{Approximation properties of the ansatz space $V_{\text{ms},\ell}^{\rm EW}$}\label{subsec:ansatz}
The approximation properties of the ansatz space
$V_{\text{ms},\ell}^{\rm EW}$ follow from the theorem below:
\begin{theorem}[Approximation properties of the multiscale space $V_{\text{ms},\ell}^{\rm EW}$] \label{thm:last}
Let Assumption \ref{ass:resolution} hold and
assume that $f\in L^2(\Omega^{\epsilon})$. Let $\ell\in
\mathbb{N}_{+}$ and $u\in V$ be the solution to Problem
\eqref{eq:model}. There then holds
\begin{equation}\label{eq:waveletErrconv1}
\begin{aligned}
\inf\limits_{v\in V_{\rm{ms},\ell}^{\rm EW}}\|u-v\|_V
&\leq \Const{\ref{thm:last}} \Ce \Big(H+\Ce 2^{-\ell/2}\Co\Big)\|{f}\|_{L^2(\Omega^{\epsilon})}.
\end{aligned}
\end{equation}
Furthermore, let $w\in L^2(\Omega^{\epsilon})$ and let $z\in V$ be the solution to Problem \eqref{eqn:weakformDual}. Then it holds
\begin{equation}\label{eq:waveletErrconvDual}
\inf\limits_{v\in V_{\rm{ms},\ell}^{\rm EW}}\|z-v\|_V\leq
\Const{\ref{thm:last}} \Ce \Big(H+\Ce 2^{-\ell/2}\Co\Big)\|{w}\|_{L^2(\Omega^{\epsilon})}.
\end{equation}
{Here, $\Const{\ref{thm:last}}$ is a positive constant independent of the wavenumber $k$ or coarse mesh size $H$. }
\end{theorem}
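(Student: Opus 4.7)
My strategy is to exhibit an explicit element of $V_{\mathrm{ms},\ell}^{\rm EW}$ built from the local decomposition \eqref{eq:decomp1}, and to control the residual via Lemma \ref{lem:parta} and Theorem \ref{thm:proj}. Specifically, set
\[
v_{\mathrm{ms}}:=\sum_{i=1}^{N}\chi_i\Bigl[\mathcal{P}_{i,\ell}(u^{i,\romb})+\Bigl(\dashint_{\omega_i}f\Bigr)v^{i}\Bigr].
\]
Since $\mathcal{P}_{i,\ell}(u^{i,\romb})=\sum_{j}(u^{i,\romb},\psi_j)_{\partial\omega_i}\mathcal{L}_i^{-1}(\psi_j)$ is a linear combination of the local edge basis functions, $\chi_i\mathcal{P}_{i,\ell}(u^{i,\romb})$ lies in $\mathrm{span}\{\chi_i\mathcal{L}_i^{-1}(v_k)\}$, while $\chi_i(\dashint_{\omega_i}f)v^i$ is a scalar multiple of $\chi_i v^i$; hence $v_{\mathrm{ms}}\in V_{\mathrm{ms},\ell}^{\rm EW}$. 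Using $\sum_i\chi_i\equiv 1$ together with the identity $u^{i,\romc}=v^i\dashint_{\omega_i}f$, the residual collapses to
\[
u-v_{\mathrm{ms}}=\sum_{i=1}^{N}\chi_i\bigl(u^{i,\roma}+e^i\bigr),\qquad e^i:=u^{i,\romb}-\mathcal{P}_{i,\ell}(u^{i,\romb}),
\]
so that the two summands are precisely the quantities controlled by Lemma \ref{lem:parta} and Theorem \ref{thm:proj}.

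Next, I would pass to the $V$-norm using the finite-overlap property \eqref{eq:overlap},
\[
\|u-v_{\mathrm{ms}}\|_V^2\leq \Cov\sum_{i=1}^{N}\|\chi_i(u^{i,\roma}+e^i)\|_{V(\Omega^{\epsilon}\cap\omega_i)}^2,
\]
and estimate each local term piece by piece. For the $u^{i,\roma}$ contribution, exploiting $\|\chi_i\|_{L^\infty}\leq 1$, $\|\nabla\chi_i\|_{L^\infty}\lesssim H^{-1}$ and the zero Dirichlet trace of $u^{i,\roma}$ on $\partial\omega_i\setminus\partial Q_1^{\epsilon}$ (which legitimizes Poincar\'e on $\Omega^{\epsilon}\cap\omega_i$), Lemma \ref{lem:parta} combined with Assumption \ref{ass:resolution} yields $\|\chi_i u^{i,\roma}\|_{V(\Omega^{\epsilon}\cap\omega_i)}\lesssim \Ce H\|f\|_{L^2(\Omega^{\epsilon}\cap\omega_i)}$. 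For the $e^i$ contribution, Theorem \ref{thm:proj} already packages the bound on $\|\nabla(\chi_i e^i)\|_{L^2(\Omega^{\epsilon}\cap\omega_i)}$ directly, while the $k$-weighted $L^2$ piece is controlled via $k\|\chi_i e^i\|_{L^2}\leq k\|e^i\|_{L^2}$ and the extra $H$-factor in the first inequality of Theorem \ref{thm:proj}, absorbing $kH$ into $\Ce$ through Assumption \ref{ass:resolution}. Summing over $i$, invoking the finite-overlap property once more to reassemble the local norms, and applying the a priori bound \eqref{res:apriori} to replace $\|u\|_{H^1(\Omega^{\epsilon}\cap\omega_i)}$-sums by $\Co\|f\|_{L^2(\Omega^{\epsilon})}$, yields \eqref{eq:waveletErrconv1}.

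The dual bound \eqref{eq:waveletErrconvDual} is obtained by running the same construction on the dual solution $z$ in place of $u$, with $w$ replacing $f$, Lemma \ref{lem:parta-dual} replacing Lemma \ref{lem:parta}, and \eqref{res:apriori-dual} replacing \eqref{res:apriori}; the analogue of Theorem \ref{thm:proj} for $z$ follows verbatim since the local operator $\mathcal{L}_i$ coincides for both problems. The main technical obstacle I anticipate is the precise bookkeeping at the partition-of-unity cutoffs: the product rule $\nabla(\chi_i w)=\chi_i\nabla w+w\nabla\chi_i$ produces a low-order term carrying an $H^{-1}$-factor through $\nabla\chi_i$, which must be cancelled exactly by the $H$-factor present in the $L^2$-bounds of Lemma \ref{lem:parta} and of the first inequality of Theorem \ref{thm:proj}, while simultaneously the $k$-weighted $L^2$-contribution must be kept subdominant. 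All of these cancellations hinge on Assumption \ref{ass:resolution}, which is where the constant $\Ce$ enters.
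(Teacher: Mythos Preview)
Your proposal is correct and follows essentially the same route as the paper: the same explicit approximant $v_{\mathrm{ms}}=\sum_i\chi_i[\mathcal{P}_{i,\ell}(u^{i,\romb})+(\dashint_{\omega_i}f)v^i]$, the same splitting $u-v_{\mathrm{ms}}=\sum_i\chi_i(u^{i,\roma}+e^i)$, the same use of the overlap constant, Lemma~\ref{lem:parta}, Theorem~\ref{thm:proj}, and the a~priori bound \eqref{res:apriori}, with the dual case handled symmetrically. The only cosmetic difference is that the paper bounds the cross term $\int|\nabla\chi_i|^2|u^{i,\roma}|^2$ directly via Poincar\'e rather than invoking $\|\nabla\chi_i\|_{L^\infty}\lesssim H^{-1}$ explicitly, but the underlying mechanism is identical.
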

\begin{proof}
We will only prove the first assertion \eqref{eq:waveletErrconv1} since the second assertion \eqref{eq:waveletErrconvDual} can be derived in a similar manner.

Recall the local decomposition in \eqref{eq:decomp1} on each coarse neighborhood $\omega_i$ for $i=1,\ldots,N$. Let
\[v:=\sum\limits_{i=1}^{N}\chi_i\mathcal{P}_{i,\ell}u^{i,\romb}+\chi_iv^{i}\dashint_{\omega_i}f
\in V_{\text{ms},\ell}^{\rm EW}.\]
We prove that $v$ is a good approximation to $u$.

Denote $\locv{e}{}{}:=u-v$.
Then the property of the partition of unity of $\{\chi_i\}_{i=1}^{N}$ leads to
\[
\locv{e}{}{}=\sum\limits_{i=1}^{N}\chi_i\locv{e}{i}{} \qquad\text{ with }
\qquad\locv{e}{i}{}:=u^{i,\roma}+(u^{i,\romb}-\mc{P}_{i,\ell}u^{i,\romb})
:=\locv{e}{i}{\roma}+\locv{e}{i}{\romb}.
\]
Taking its squared energy norm, and using the overlap condition \eqref{eq:overlap}, we arrive at
\begin{align*}
\|e\|_V^2&=\int_{\Omega^{\epsilon}}\Big(|\nabla \locv{e}{}{}|^2+k^2 e^2 \Big) \dx=\int_{\Omega^{\epsilon}}
\Big(
\Big|\sum\limits_{i=1}^{N}
\nabla(\chi_i\locv{e}{i}{})\Big|^2+k^2 \Big(\sum\limits_{i=1}^{N}\chi_i\locv{e}{i}{}\Big)^2
\Big)
\dx\\
&\leq\Cov\sum\limits_{i=1}^{N}\Big(\int_{\Omega^{\epsilon}\cap\omega_i}
|\nabla(\chi_i\locv{e}{i}{})|^2\dx+k^2\int_{\Omega^{\epsilon}\cap\omega_i}
(\locv{e}{i}{})^2\dx
\Big).
\end{align*}
Then Young's inequality implies
\begin{align}\label{eq:1111}
\int_{\Omega^{\epsilon}\cap\omega_i}
|\nabla(\chi_i\locv{e}{i}{})|^2\dx+k^2\int_{\Omega^{\epsilon}\cap\omega_i}
(\locv{e}{i}{})^2\dx
&\leq 2\Big(\int_{\Omega^{\epsilon}\cap\omega_i}
|\nabla(\chi_i\locv{e}{i}{\roma})|^2\dx+ k^2\int_{\Omega^{\epsilon}\cap\omega_i}
(\locv{e}{i}{\roma})^2\dx\Big)\nonumber\\
&+2\Big( \int_{\Omega^{\epsilon}\cap\omega_i}
|\nabla(\chi_i\locv{e}{i}{\romb})|^2\dx+k^2\int_{\Omega^{\epsilon}\cap\omega_i}
(\locv{e}{i}{\romb})^2\dx
 \Big).
\end{align}
Using the product rule, and the Poincar\'{e} inequality, we obtain
\begin{align}
\int_{\Omega^{\epsilon}\cap\omega_i}
|\nabla(\chi_i\locv{e}{i}{\roma})|^2\dx+ k^2\int_{\Omega^{\epsilon}\cap\omega_i}
(\locv{e}{i}{\roma})^2\dx
&\leq 2\Big( \int_{\Omega^{\epsilon}\cap\omega_i}
|\nabla\chi_i|^2|\locv{e}{i}{\roma}|^2\dx+\|\locv{e}{i}{\roma}\|_{V(\Omega^{\epsilon}\cap\omega_i)}^2\Big)\nonumber\\
&\leq 2\Big(\Cpoin{\omega_i}\int_{\Omega^{\epsilon}\cap\omega_i}|\nabla\locv{e}{i}{\roma}|^2\dx
+\|\locv{e}{i}{\roma}\|_{V(\Omega^{\epsilon}\cap\omega_i)}^2\Big).\label{eq:444}
\end{align}
Then Lemma \ref{lem:parta} yields
\begin{align*}
\int_{\Omega^{\epsilon}\cap\omega_i}
|\nabla(\chi_i\locv{e}{i}{\roma})|^2\dx+ k^2\int_{\Omega^{\epsilon}\cap\omega_i}
(\locv{e}{i}{\roma})^2\dx
\leq (8+2\Cpoin{\omega_i})\Ce^2\Cpoin{\omega_i}H^2\normL{f}{\Omega^{\epsilon}\cap\omega_i}^2.
\end{align*}
Analogously, we can derive the following upper bound for the second term by Theorem \ref{thm:proj}:
\begin{align*}
\normL{\nabla(\chi_i\locv{e}{i}{\romb})}{\Omega^{\epsilon}\cap\omega_i}^2+ k^2\int_{\Omega^{\epsilon}\cap\omega_i}
(\locv{e}{i}{\romb})^2\dx
\leq\Cw\Ce^22^{-\ell}\Big(\|u\|_{H^1(\Omega^{\epsilon}\cap \omega_i)}^2+\Ce^2\Cpoin{\omega_i}\normL{f}{\Omega^{\epsilon}\cap\omega_i}^2\Big).
\end{align*}
Inserting these two estimates into \eqref{eq:1111}, and utilizing the overlapping condition \eqref{eq:overlap}, leads to
\begin{align}\label{eq:999}
\|e\|_V^2
&\leq\Cw\Ce^2\Big((H^2+\Ce^22^{-\ell})\normL{f}{\Omega^{\epsilon}}^2
+2^{-\ell}\normHsemi{u}{\Omega^{\epsilon}}^2\Big).
\end{align}
Furthermore, inserting \eqref{res:apriori} into \eqref{eq:999} shows \eqref{eq:waveletErrconv1}. This completes the proof.
\end{proof}

Finally, we are ready to present the main result of this section:
\begin{proposition}[Error estimate for Algorithm \ref{algorithm:wavelet}]\label{prop:wavelet-basedconv}
Assume that $f\in L^2(\Omega^{\epsilon})$ and let the coarse mesh size $H$ and the level parameter $\ell\in \mathbb{N}_{+}$ satisfy
\begin{align}\label{eq:wavelet-level}
H\leq k^{-1}\frac{1}{4\Const{b}\Const{\ref{thm:last}}\Ce}\quad \text{ and }\quad\ell\geq 2\log_2(\Const{b}\Const{\ref{thm:last}}\Ce^2k\Co)+4.
\end{align}
Let $u\in V$ and $u_H\in V_{\rm{ms},\ell}^{\rm EW}$ be the solution to Problem \eqref{eq:model}, and from Algorithm \ref{algorithm:wavelet}, respectively. There holds
\begin{equation}\label{eq:waveletErrconv}
\begin{aligned}
\|u-u_{\text{ms},\ell}^{\text{EW}}\|_V
&\leq 4 \Const{b} \Const{\ref{thm:last}} \Ce H\|{f}\|_{L^2(\Omega^{\epsilon})}.
\end{aligned}
\end{equation}
\end{proposition}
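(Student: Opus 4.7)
The plan is to run the standard Schatz-Aubin-Nitsche argument, using Gårding's inequality together with the duality estimate; this reduces the problem to the quasi-optimal approximation bounds already established in Theorem \ref{thm:last}, applied to both the primal problem and its dual.

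Concretely, write $e := u - u_{\text{ms},\ell}^{\text{EW}}$. Step one is Galerkin orthogonality: since both $u$ and $u_{\text{ms},\ell}^{\text{EW}}$ satisfy the same variational formulation against all test functions in $V_{\text{ms},\ell}^{\text{EW}}\subset V$, one has $a(e,v_H)=0$ for every $v_H\in V_{\text{ms},\ell}^{\text{EW}}$. Step two is Gårding's inequality from Theorem \ref{them:sesquilinear}, which gives
\begin{equation*}
\|e\|_V^2 \leq \mathrm{Re}\{a(e,e)\} + 2k^2\|e\|_{L^2(\Omega^{\epsilon})}^2.
\end{equation*}

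Step three handles the $L^2$ term by duality. I would let $z\in V$ solve the dual problem \eqref{eqn:weakformDual} with right-hand side $w=e$. Then
\begin{equation*}
\|e\|_{L^2(\Omega^{\epsilon})}^2 = a(e,z) = a(e,z-v_z)
\end{equation*}
for any $v_z\in V_{\text{ms},\ell}^{\text{EW}}$, by Galerkin orthogonality again. Applying continuity of $a$ with constant $\Const{b}$ and then taking the infimum over $v_z$, the dual approximation bound \eqref{eq:waveletErrconvDual} from Theorem \ref{thm:last} (with $w=e$, so $\|w\|_{L^2}=\|e\|_{L^2}$) yields
\begin{equation*}
\|e\|_{L^2(\Omega^{\epsilon})} \leq \Const{b}\Const{\ref{thm:last}}\Ce\bigl(H+\Ce 2^{-\ell/2}\Co\bigr)\|e\|_V.
\end{equation*}
The mesh condition $H\le(4\Const{b}\Const{\ref{thm:last}}\Ce k)^{-1}$ and the level condition $\ell\ge 2\log_2(\Const{b}\Const{\ref{thm:last}}\Ce^2 k\Co)+4$ are exactly what is needed so that each of the two summands $\Const{b}\Const{\ref{thm:last}}\Ce\, H$ and $\Const{b}\Const{\ref{thm:last}}\Ce^2 2^{-\ell/2}\Co$ is bounded by $1/(4k)$, so that the full prefactor above is at most $1/(2k)$. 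Squaring gives $2k^2\|e\|_{L^2}^2\le\tfrac12\|e\|_V^2$, which can be absorbed into the left-hand side of Gårding's inequality.

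Step four handles the remaining $\mathrm{Re}\{a(e,e)\}$ term. By Galerkin orthogonality, $a(e,e)=a(e,u-v)$ for any $v\in V_{\text{ms},\ell}^{\text{EW}}$; continuity and the primal approximation bound \eqref{eq:waveletErrconv1} from Theorem \ref{thm:last} give
\begin{equation*}
|a(e,e)|\leq \Const{b}\|e\|_V\cdot\Const{\ref{thm:last}}\Ce\bigl(H+\Ce 2^{-\ell/2}\Co\bigr)\|f\|_{L^2(\Omega^{\epsilon})}.
\end{equation*}
Combining this with the absorbed Gårding bound yields $\tfrac12\|e\|_V^2\le |a(e,e)|$, hence $\|e\|_V\le 2\Const{b}\Const{\ref{thm:last}}\Ce(H+\Ce 2^{-\ell/2}\Co)\|f\|_{L^2(\Omega^{\epsilon})}$. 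Invoking once more that the level condition forces $\Ce 2^{-\ell/2}\Co\le H$ in the regime of interest, the two summands collapse into $2H$, producing the stated constant $4\Const{b}\Const{\ref{thm:last}}\Ce$ in \eqref{eq:waveletErrconv}.

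The main obstacle is bookkeeping rather than conceptual: one has to trace the exact constants so that the mesh and level thresholds in \eqref{eq:wavelet-level} line up simultaneously with (i) the absorption step needed to beat $2k^2\|e\|_{L^2}^2$ and (ii) the dominance of the $H$ term over the wavelet term $\Ce 2^{-\ell/2}\Co$. Everything else is an immediate consequence of Theorem \ref{thm:last} and Gårding's inequality.
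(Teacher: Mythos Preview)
Your proposal is correct and follows essentially the same Schatz--Aubin--Nitsche argument as the paper: G\aa rding's inequality plus Galerkin orthogonality, a duality step invoking \eqref{eq:waveletErrconvDual} to absorb the $2k^2\|e\|_{L^2}^2$ term, then continuity together with the primal bound \eqref{eq:waveletErrconv1} to conclude. The only minor remark is that your final sentence (``the level condition forces $\Ce 2^{-\ell/2}\Co\le H$'') is not literally implied by \eqref{eq:wavelet-level} alone, since $H$ is only bounded from above there; the paper glosses over the same step, and in both cases the intended reading is that the wavelet term is bounded by the threshold $\tfrac{1}{4\Const{b}\Const{\ref{thm:last}}\Ce k}$, which coincides with $H$ at the resolution limit.
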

\begin{proof}
Since $V_{\text{ms},\ell}^{\rm EW}\subset V$, G\aa rding\rq{}s inequality in Theorem \ref{them:sesquilinear} implies
\begin{align}
\|u-u_{\text{ms},\ell}^{\text{EW}}\|_V^2&\leq a(u-u_{\text{ms},\ell}^{\text{EW}},u-u_{\text{ms},\ell}^{\text{EW}})
+2k^2\normL{u-u_{\text{ms},\ell}^{\text{EW}}}{\Omega^{\epsilon}}^2\nonumber\\
&=a(u-u_{\text{ms},\ell}^{\text{EW}},u-v_{\text{ms},\ell}^{\text{EW}})
+2k^2\normL{u-u_{\text{ms},\ell}^{\text{EW}}}{\Omega^{\epsilon}}^2\label{eq:222}
\end{align}
for all $v_{\text{ms},\ell}^{\text{EW}}\in V_{\text{ms},\ell}^{\text{EW}}$.

Next we estimate $\normL{u-u_{\text{ms},\ell}^{\text{EW}}}{\Omega^{\epsilon}}$: Let $z\in V$ be the solution to
\begin{align*}
a(v,z)=(v,u-u_{\text{ms},\ell}^{\text{EW}})_{\Omega^{\epsilon}} \text{ for all } v\in V,
\end{align*}
then for all $z_{\text{ms},\ell}^{\text{EW}}\in
V_{\text{ms},\ell}^{\text{EW}}$ we obtain that
\begin{align*}
\normL{u-u_{\text{ms},\ell}^{\text{EW}}}{\Omega^{\epsilon}}^2
&=a(u-u_{\text{ms},\ell}^{\text{EW}},z)
=a(u-u_{\text{ms},\ell}^{\text{EW}},z-z_{\text{ms},\ell}^{\text{EW}}).
\end{align*}
Furthermore, an application of Theorem \ref{them:sesquilinear} leads to
\begin{align*}
\normL{u-u_{\text{ms},\ell}^{\text{EW}}}{\Omega^{\epsilon}}^2
&\leq \Const{b}\|u-u_{\text{ms},\ell}^{\text{EW}}\|_V\inf\limits_{z_{\text{ms},\ell}^{\text{EW}}\in V_{\text{ms},\ell}^{\text{EW}}}\|z-z_{\text{ms},\ell}^{\text{EW}}\|_V.
\end{align*}
By \eqref{eq:waveletErrconvDual}, and condition \eqref{eq:wavelet-level}, we arrive at
\begin{align*}
\normL{u-u_{\text{ms},\ell}^{\text{EW}}}{\Omega^{\epsilon}}
&\leq \frac{1}{2k}\|u-u_{\text{ms},\ell}^{\text{EW}}\|_V.
\end{align*}
This, together with \eqref{eq:222}, leads to
\begin{align*}
\|u-u_{\text{ms},\ell}^{\text{EW}}\|_V^2\leq a(u-u_{\text{ms},\ell}^{\text{EW}},u-v_{\text{ms},\ell}^{\text{EW}})
+\frac{1}{2}\|u-u_{\text{ms},\ell}^{\text{EW}}\|_V^2.
\end{align*}
Consequently, we obtain
\begin{align*}
\|u-u_{\text{ms},\ell}^{\text{EW}}\|_V^2\leq 2 a(u-u_{\text{ms},\ell}^{\text{EW}},u-v_{\text{ms},\ell}^{\text{EW}}).
\end{align*}
Finally, an application of the boundedness of the sesquilinear form $a(\cdot,\cdot)$ Theorem \ref{them:sesquilinear} and the approximation property \eqref{eq:waveletErrconv} prove the desired assertion.
\end{proof}
\section{Numerical experiments}\label{sec:numerical}

We present numerical experiments to show the
performance of Algorithm \ref{algorithm:wavelet} with locally periodic perforations in Sections \ref{subsec:levelpara} and \ref{subsec:meshsize}, and with random perforations in Section \ref{subsec:random}. We consider two different source excitations placed at $(0,0)$ and $(1.44,0)$ for each model, and take the wavenumber
$k=64$.

The coarse mesh $\mathcal{T}_H$ is a regular partition of the domain $\Omega^{\epsilon}$ into
finite elements with a mesh size $H$. Then each coarse element is further partitioned
into a union of connected fine grid blocks. The fine-grid partition is denoted by
$\mathcal{T}_h$, which provides a sufficiently fine mesh for standard
finite element solvers to get a reference solution; for sufficient
accuracy we take $h:={1}/{320}$. In addition, the Perfectly Matched Layer (PML) is utilized to absorb the outgoing wave, see Appendix \ref{sec:pml} for more details.

As is usual in the finite element literature we use the $L^2(\Omega^{\epsilon})$ and
$H^1(\Omega^{\epsilon})$-relative errors to assess the accuracy of the scheme, which are defined by
\begin{align*}
\frac{\normL{u_h-u_{\text{ms},\ell}^{\text{EW}}}{\Omega^{\epsilon}}}{\normL{u_h}{\Omega^{\epsilon}}}
\quad \text{ and }\quad \frac{\normL{\nabla (u_h-u_{\text{ms},\ell}^{\text{EW}})}{\Omega^{\epsilon}}}{\normL{\nabla u_h}{\Omega^{\epsilon}}}.
\end{align*}

\subsection{Performance of Algorithm \ref{algorithm:wavelet}: level
  parameter $\ell$}\label{subsec:levelpara}
We consider in this section the two perforated models as shown
in Figure \ref{fig:testmodel}. To describe the computational domain
$\Omega^{\epsilon}$, we use Eq.\eqref{eq:geo}. Let
$\Omega:=[-2.4,2.4]^2$, $\Omega_1:=[-1,1]^2$ and the size of the cell
$\epsilon:=1/6$. The perforations in a unit cell are
$Q:=[0.25,0.75]^2$ and $Q:=[0.1,0.9]^2$ in models 1 and 2. Note that
there are $144$ perforations in both models, this strong heterogeneity
in the computational domain $\Omega^{\epsilon}$ makes Problem
\eqref{eq:model} even harder.  

Figure \ref{fig:solcompare_m1_c} demonstrates
 dynamic anisotropy, also known as self-collimation, \cite{chigrin03a,prather07a} which is a striking, frequency sensitive and dependent,
 effect. Naively, one might assume that wave
 excitation of the crystal, at its centre, would lead to isotropic
 wavefronts within the crystal; this is indeed the case in the standard
 homogenisation, low-frequency long-wave, limit where the wavelength
 is much larger than the cell-to-cell spacing.  However, as the
 frequency increases and wavelength decreases,
 Bragg scattering occurs with constructive or destructive interference
 leading to well-defined frequency windows (band-gaps) within which
 wave propagation is disallowed. Interference also occurs that acts to
 create anisotropic wavefronts with the most severe example being that
 where all the wave energy is channeled in specific directions; in
 terms of homogenisation there are variants that are developed for
 high-frequencies \cite{craster10a} that show the effective medium PDE changing its
 character from elliptic to hyperbolic with these directions of
 self-collimation being the characteristics of the hyperbolic
 system \cite{maling17a}. Figure \ref{fig:solcompare_m1_r} shows
 lensing, another effect created by the crystal whereby a partial image of
 the source (on the right) forms to the left of the crystal.

\begin{figure}[H]
	\centering
	\subfigure[Reference solution.]{
		\includegraphics[trim={4cm 9.5cm 4cm 10cm},clip,width=2.5in]{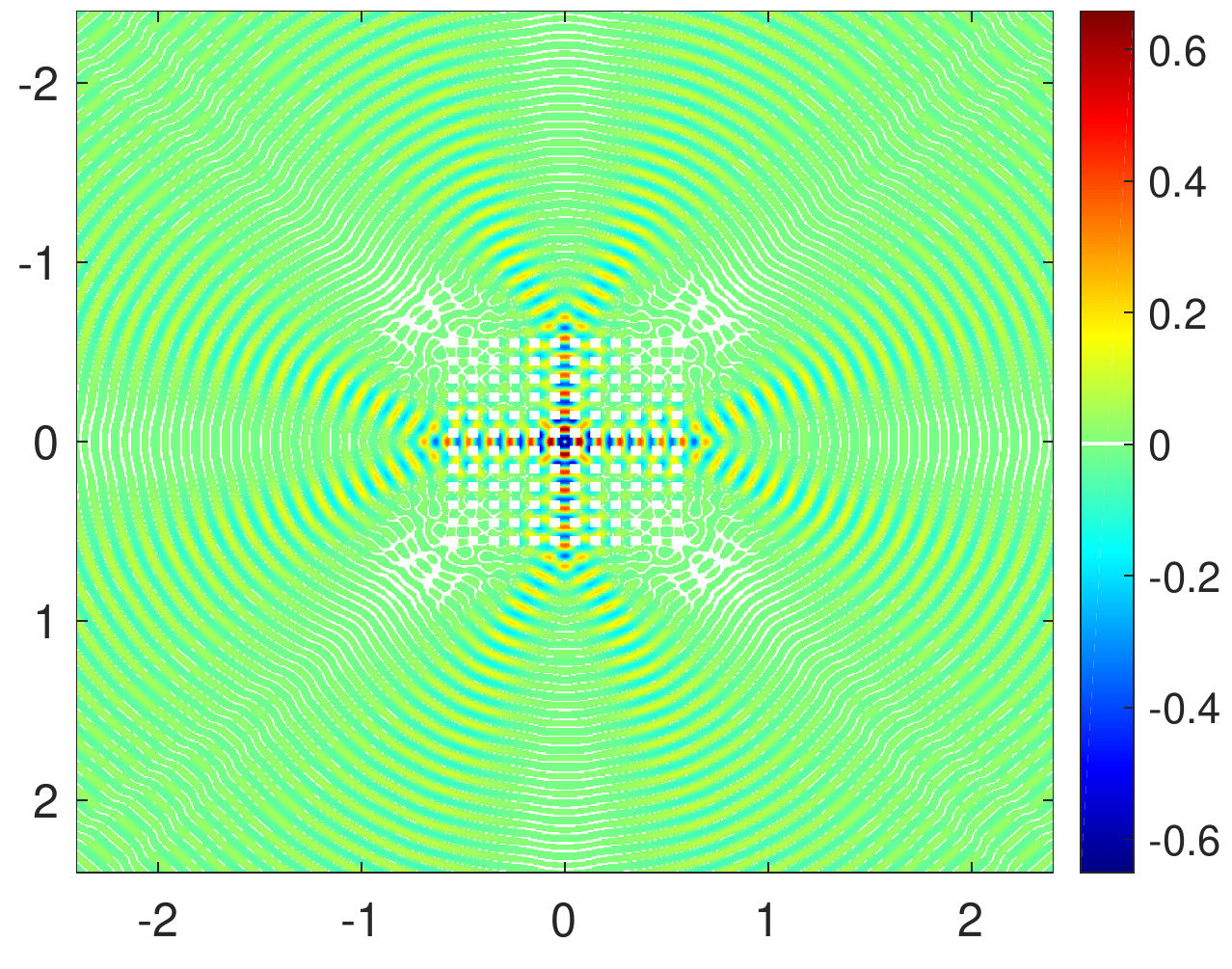}}
	\subfigure[Multiscale solution.]{
		\includegraphics[trim={4cm 9.5cm 4cm 10cm},clip,width=2.5in]{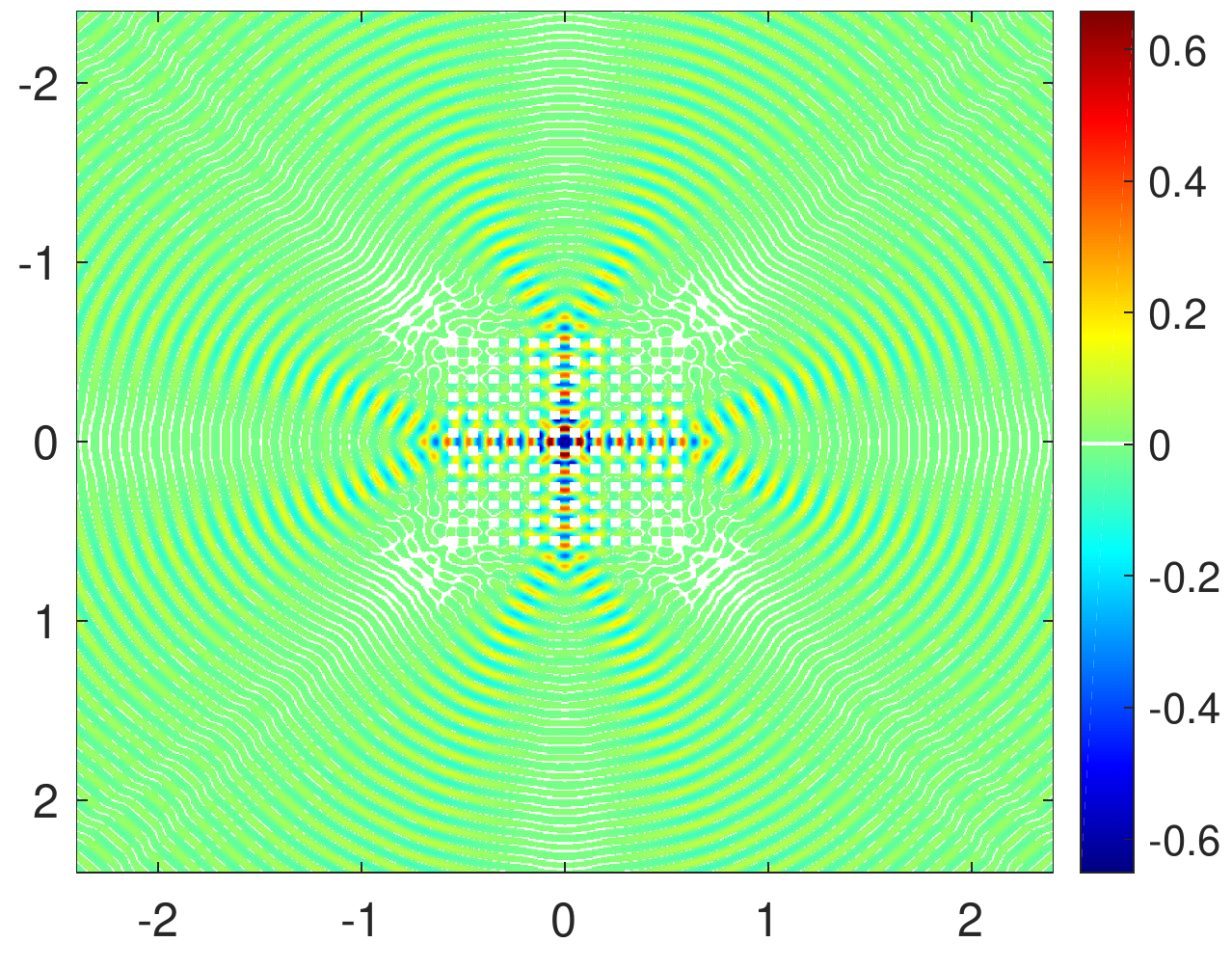}}	
	\caption{Reference solution and multiscale solution for model
          1 with centered source, $H:=1/10$ and $\ell=2$. The
          $L^2(\Omega^{\epsilon})$-relative error is 5.35\%. This
          simulation demonstrates the highly directional wavefields
          created by dynamic anisotropy. The photonic crystal is
          outlined as the small white squares, c.f. Figure \ref{fig:testmodel}. Here and below, the white color in the plots depicts either perforations or the corresponding values of the field close to zero.}
	\label{fig:solcompare_m1_c}
\end{figure}

\begin{figure}[H]
	\centering
	\subfigure[Reference solution]{
		\includegraphics[trim={4cm 9.5cm 4cm 10cm},clip,width=2.5in]{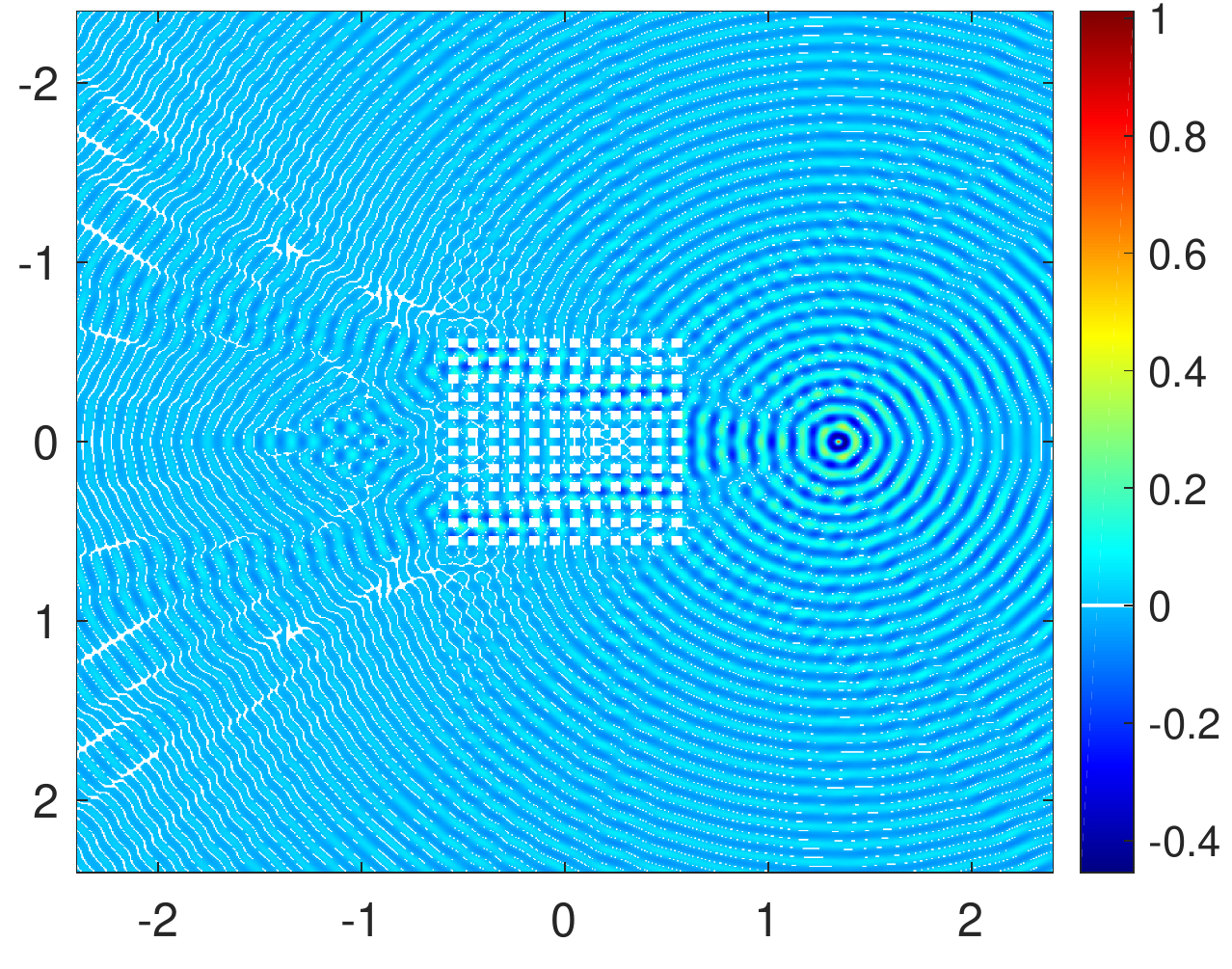}}
	\subfigure[Multiscale solution]{
		\includegraphics[trim={4cm 9.5cm 4cm 10cm},clip,width=2.5in]{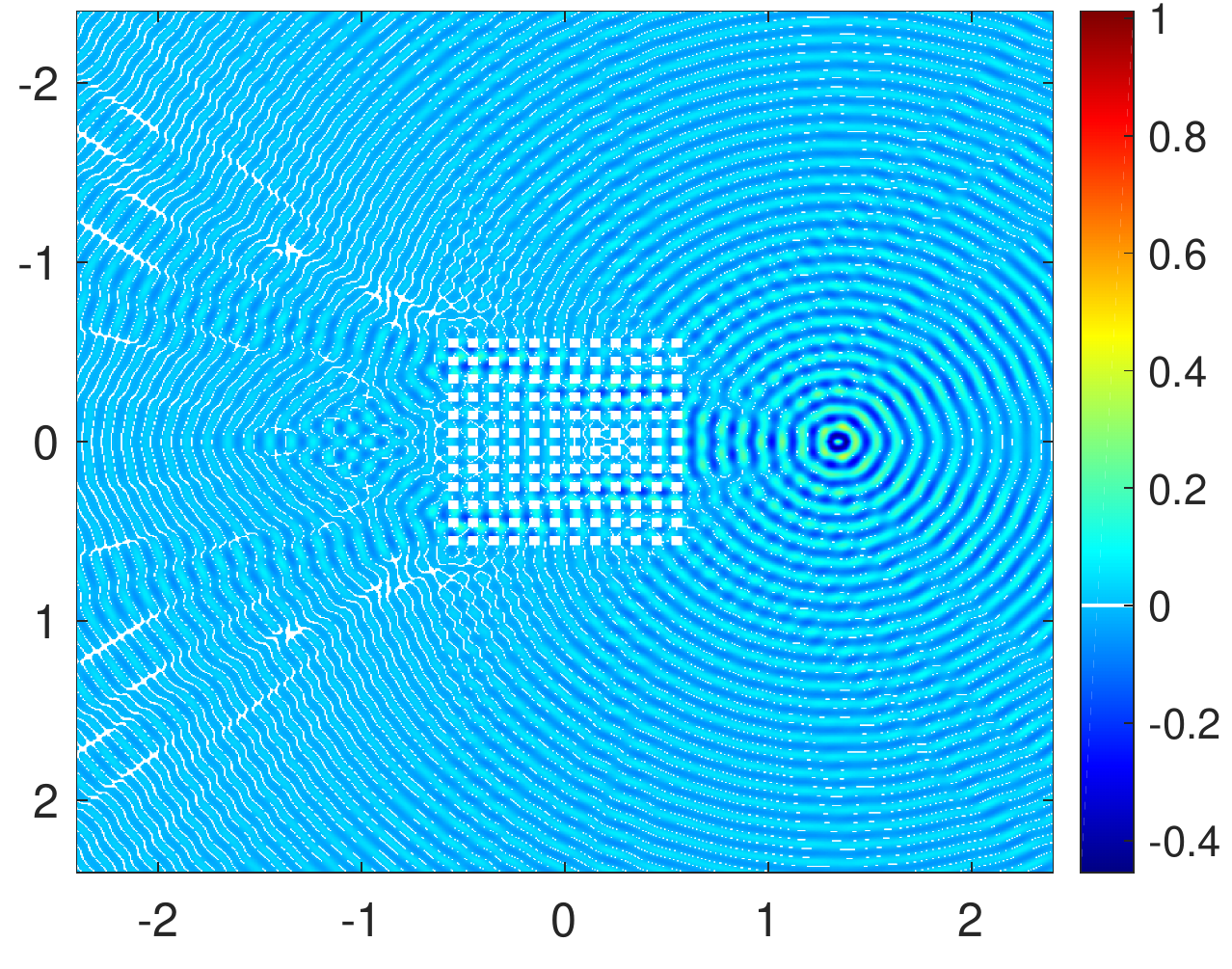}}	
	\caption{Reference solution and multiscale solution for model 1 with right source, $H:=1/10$ and $\ell=2$. The $L^2(\Omega^{\epsilon})$-relative error is 4.24\%. }
	\label{fig:solcompare_m1_r}
\end{figure}

As we can see from Figures \ref{fig:solcompare_m1_c} and
\ref{fig:solcompare_m1_r}, even with $\ell=2$, one can observe the
wave scattering phenomenon resulting from the perforated structure
with clear agreement between the multiscale solution and the reference
solution with full capture of the microscale feature of the wavefield;
we now quantify this agreement. We test the convergence of Algorithm \ref{algorithm:wavelet} with
respect to the level parameter $\ell$ for both of the perforated domains
shown in Figure \ref{fig:testmodel}. To this end, we fix the coarse
scale mesh size $H:=1/10$. Recall that the level parameter $\ell$
determines the number of multiscale basis functions in each coarse
neighborhood $\omega_i$ for $i=1,\cdots,N$ with $N$ as the number of
coarse grids; specifically this number is $2^{\ell+2}+1$ and the level parameter $\ell$ shows the complexity of Algorithm \ref{algorithm:wavelet}.


Figure \ref{fig:er_m1} shows
 the $L^2(\Omega^{\epsilon})$ and $H^1(\Omega^{\epsilon})$-relative
 errors versus $\ell$, and both the $L^2(\Omega^{\epsilon})$ and
 $H^1(\Omega^{\epsilon})$-relative errors decay rapidly as more
 wavelet basis functions are added. For example, for the case that the
 source lies at the center shown in Figure \ref{fig:er_m1}(a), the
 $L^2(\Omega^{\epsilon})$ errors decrease from $120.0\%$ to $4.08\%$
 as the level parameter $\ell$ increases from $0$ to $3$. Figure
 \ref{fig:er_m1} suggests that Algorithm \ref{algorithm:wavelet} with
 level parameter $\ell=2$ yields an accurate solver with little gain
 from going to higher $\ell$.
\begin{figure}[H]
	\centering
	\subfigure[Error history for model 1, centered source.]{
		\includegraphics[trim={1cm 7.5cm 1cm 6.9cm},clip,width=2.5in]{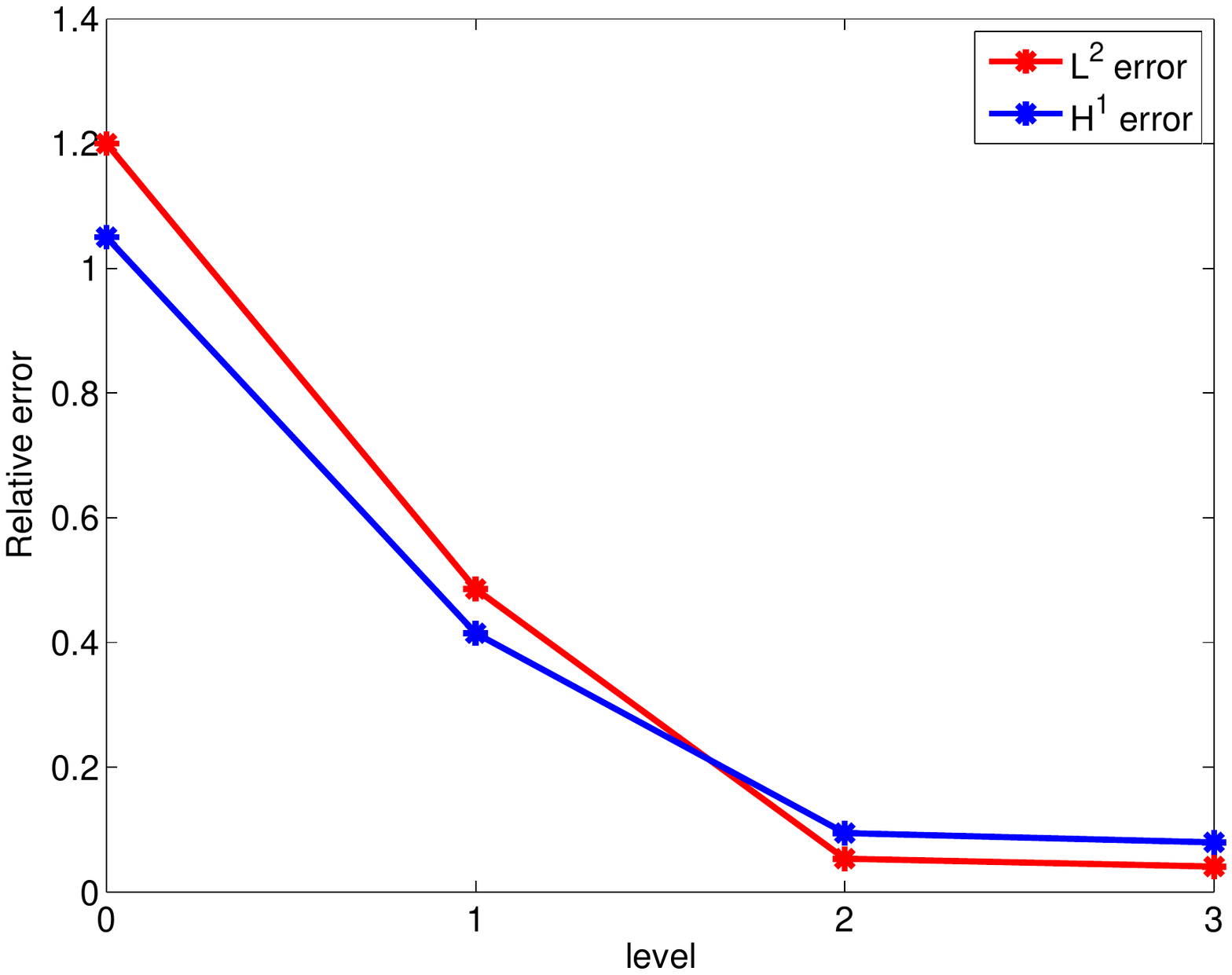}}
	\subfigure[Error history for model 1, right source.]{
		\includegraphics[trim={1cm 7.5cm 1cm 6.9cm},clip,width=2.5in]{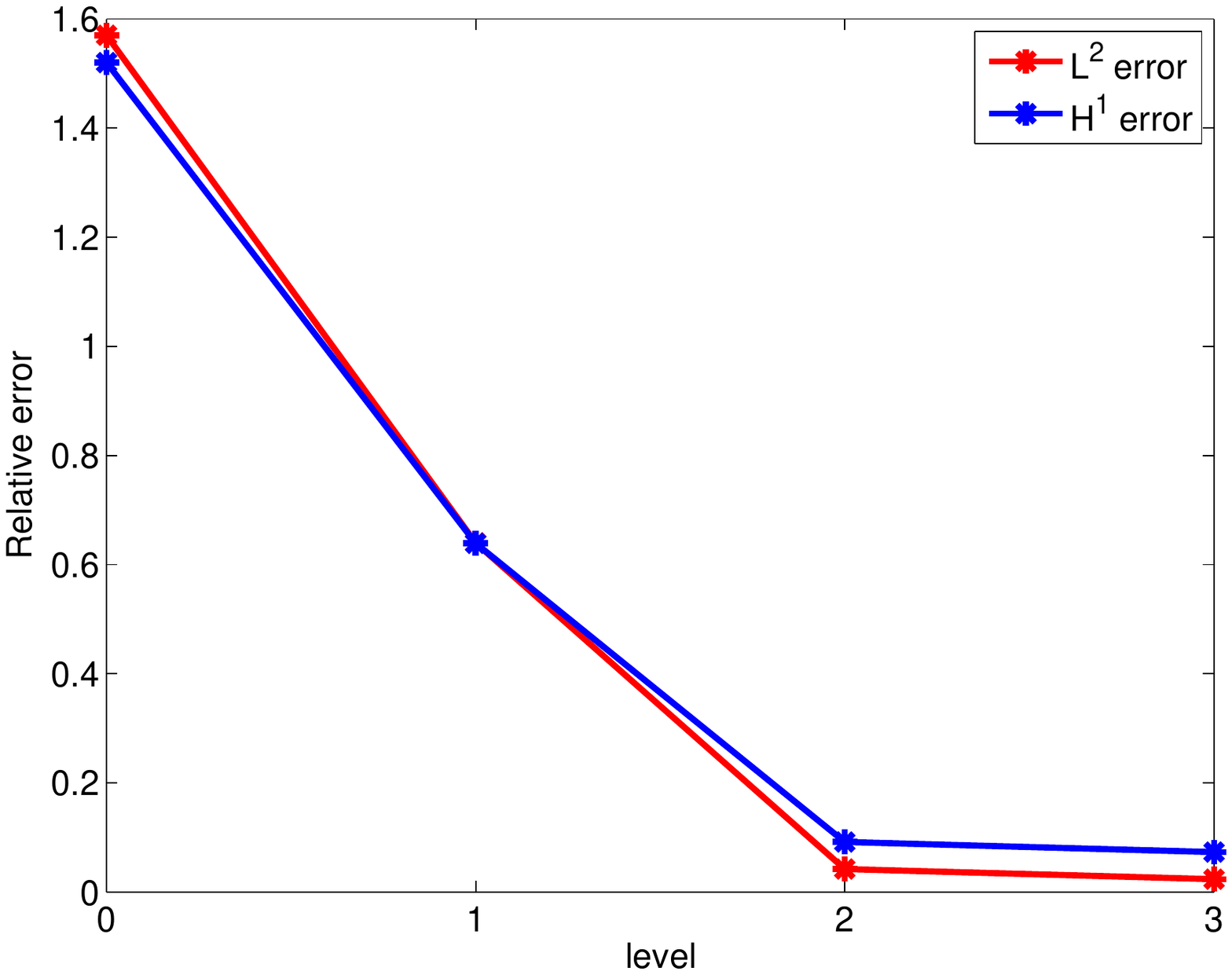}}	
	\caption{Relative error against level, $H=1/10$.}
	\label{fig:er_m1}
\end{figure}
\begin{figure}[H]
	\centering
	\subfigure[Reference solution.]{
		\includegraphics[trim={4cm 9.5cm 4cm 10cm},clip,width=2.5in]{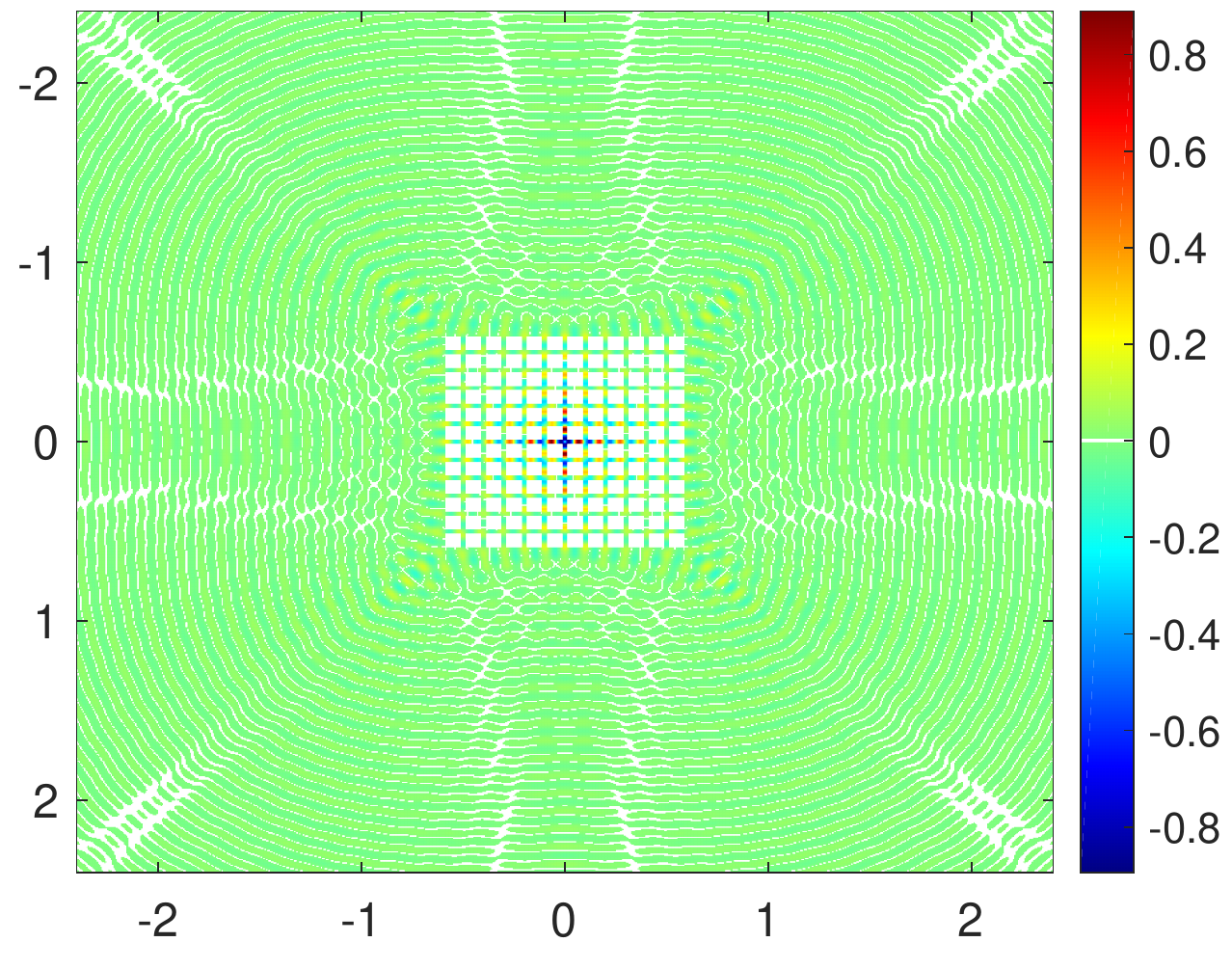}}
	\subfigure[Multiscale solution.]{
		\includegraphics[trim={4cm 9.5cm 4cm 10cm},clip,width=2.5in]{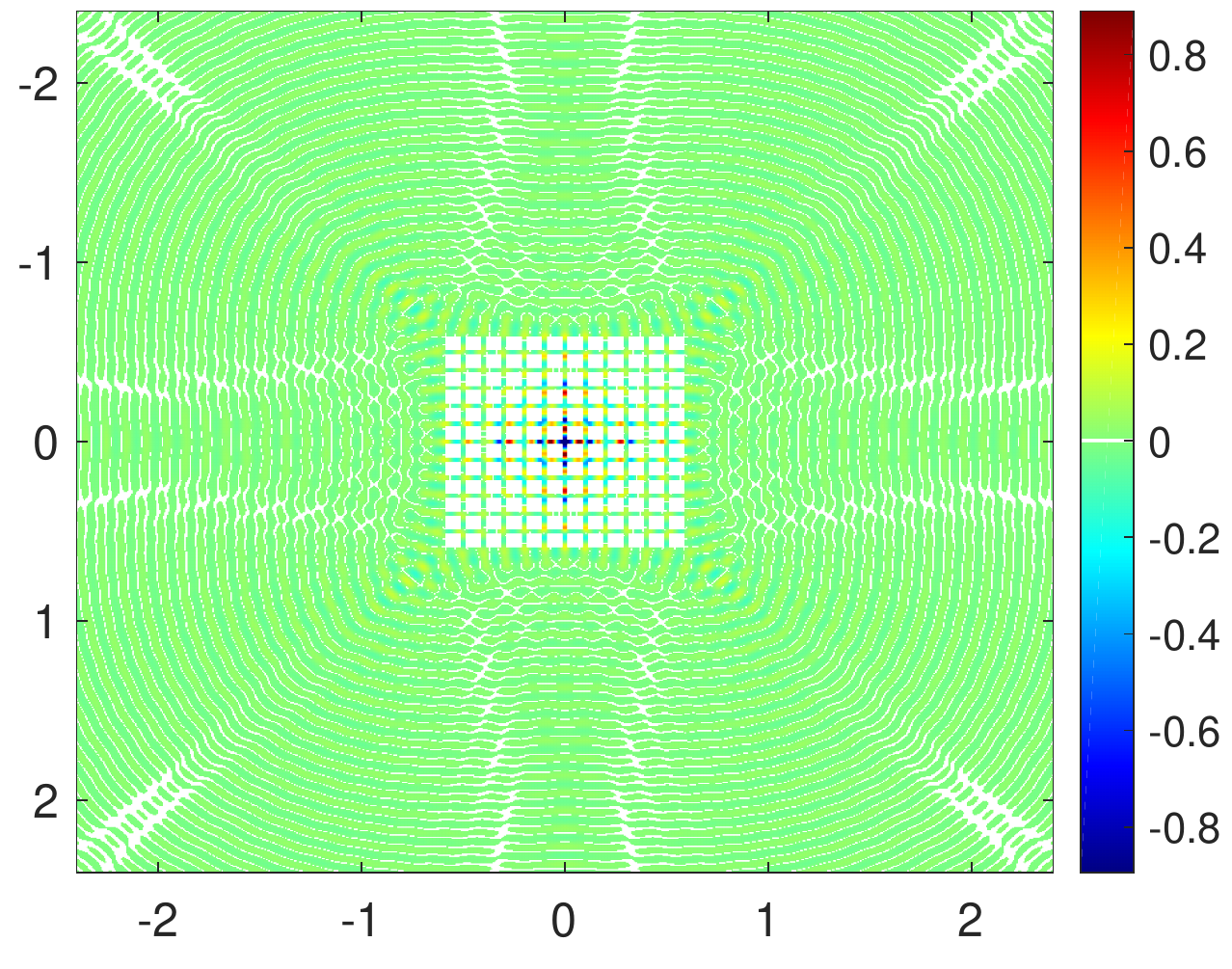}}	
	\caption{Reference solution and multiscale solution for model 2 with centered source, $H:=1/10$ and $\ell=2$. The $L^2(\Omega^{\epsilon})$-relative error is 25.7\%. }
	\label{fig:solcompare_m2_c}
\end{figure}
Similar performance for the perforated domain $\Omega^{\epsilon}$ depicted in
model 2 is obtained, see Figures \ref{fig:solcompare_m2_c} and \ref{fig:solcompare_m2_r}. The second model as compared with the perforated
domain in model 1 has the gap between perforations much narrower, we
expect stronger singularities and finer structure in the solution, and
this provides insight on method robustness. In terms of the physics,
the frequency has remained fixed and altering the perforation size
alters the dynamic anisotropy slightly, we observe strong
directionality and concentration of the highly oscillatory wave field in the narrow
gaps.

 The relative error decay history is shown in Figure \ref{fig:er_m2}
 and we find similar relative error decay behavior as in Figure \ref{fig:er_m1} and Algorithm 1 is both efficient and accurate. For instance, for the case that the source lies
in the center, the $L^2(\Omega^{\epsilon})$-relative error reaches
below $10\%$ even with the level parameter $\ell=1$.

\begin{figure}[H]
	\centering
	\subfigure[Reference solution.]{
		\includegraphics[trim={4cm 9.5cm 4cm 10cm},clip,width=2.5in]{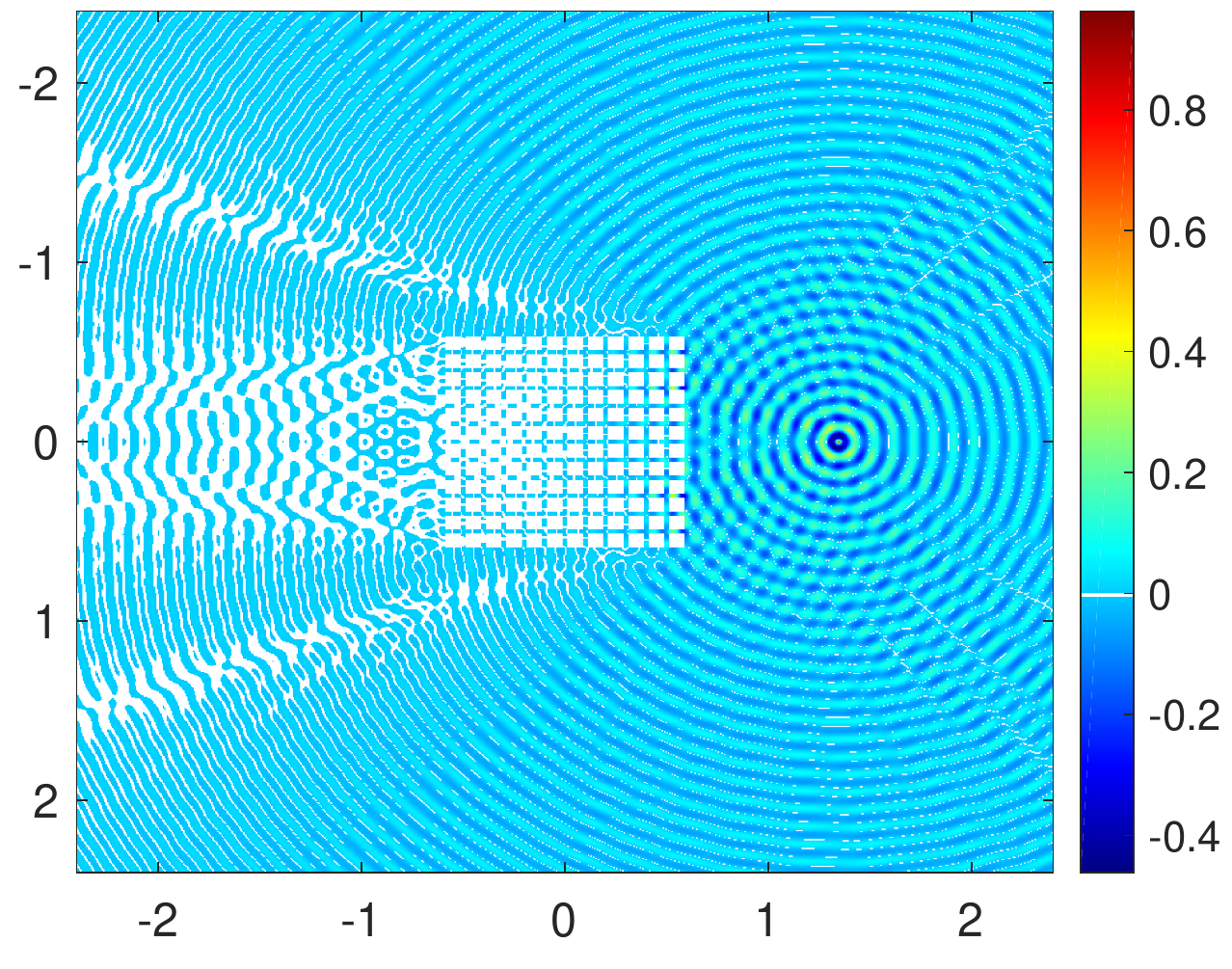}}
	\subfigure[Multiscale solution.]{
		\includegraphics[trim={4cm 9.5cm 4cm 10cm},clip,width=2.5in]{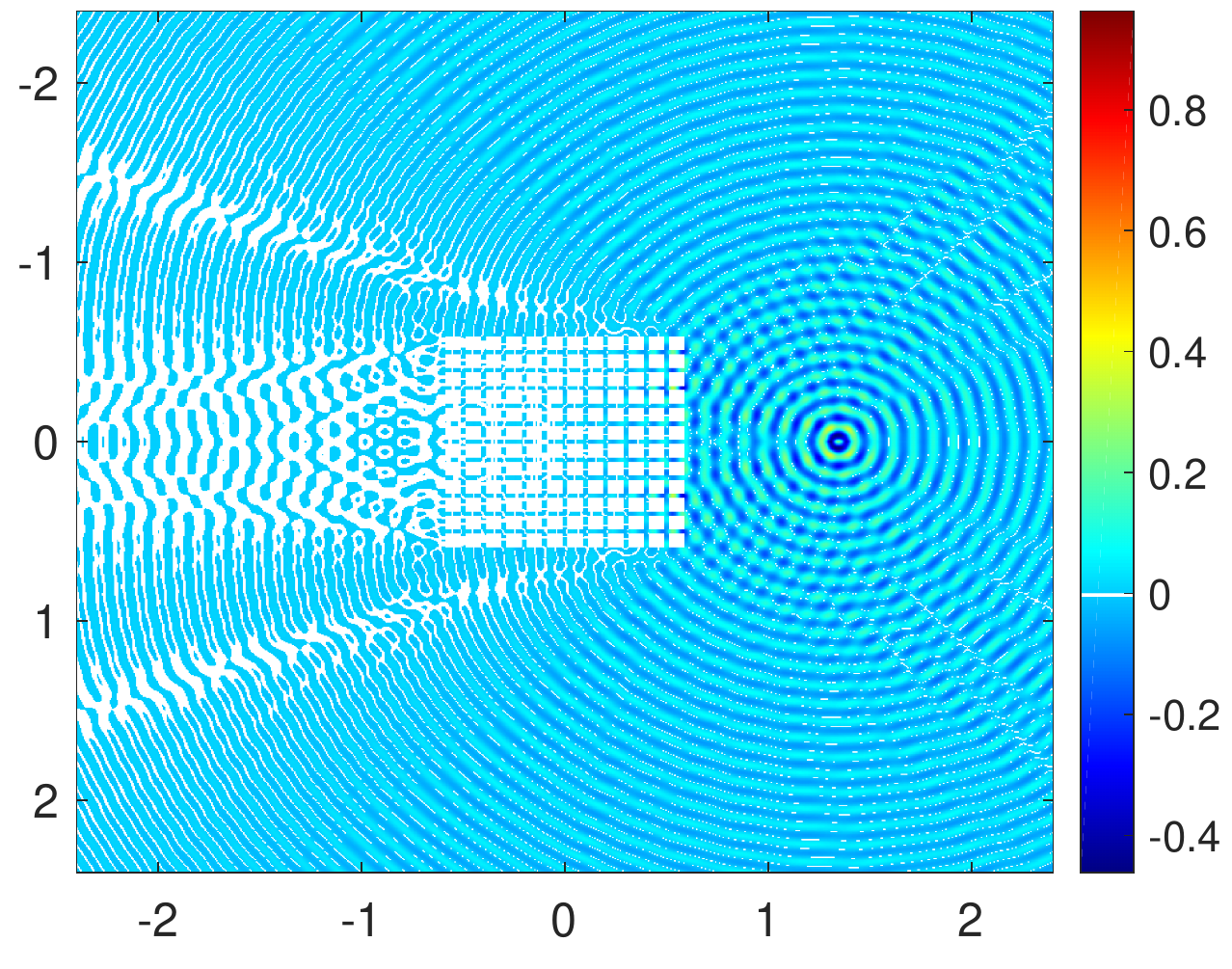}}	
	\caption{Reference solution and multiscale solution for model 2 with right source, $H:=1/10$ and $\ell=2$. The $L^2(\Omega^{\epsilon})$-relative error is 4.20\%.}
	\label{fig:solcompare_m2_r}
\end{figure}

%
%

\begin{figure}[H]
	\centering
	\subfigure[Error for model 2, centered source.]{
		\includegraphics[trim={1cm 7.5cm 1cm 7cm},clip,width=2.5in]{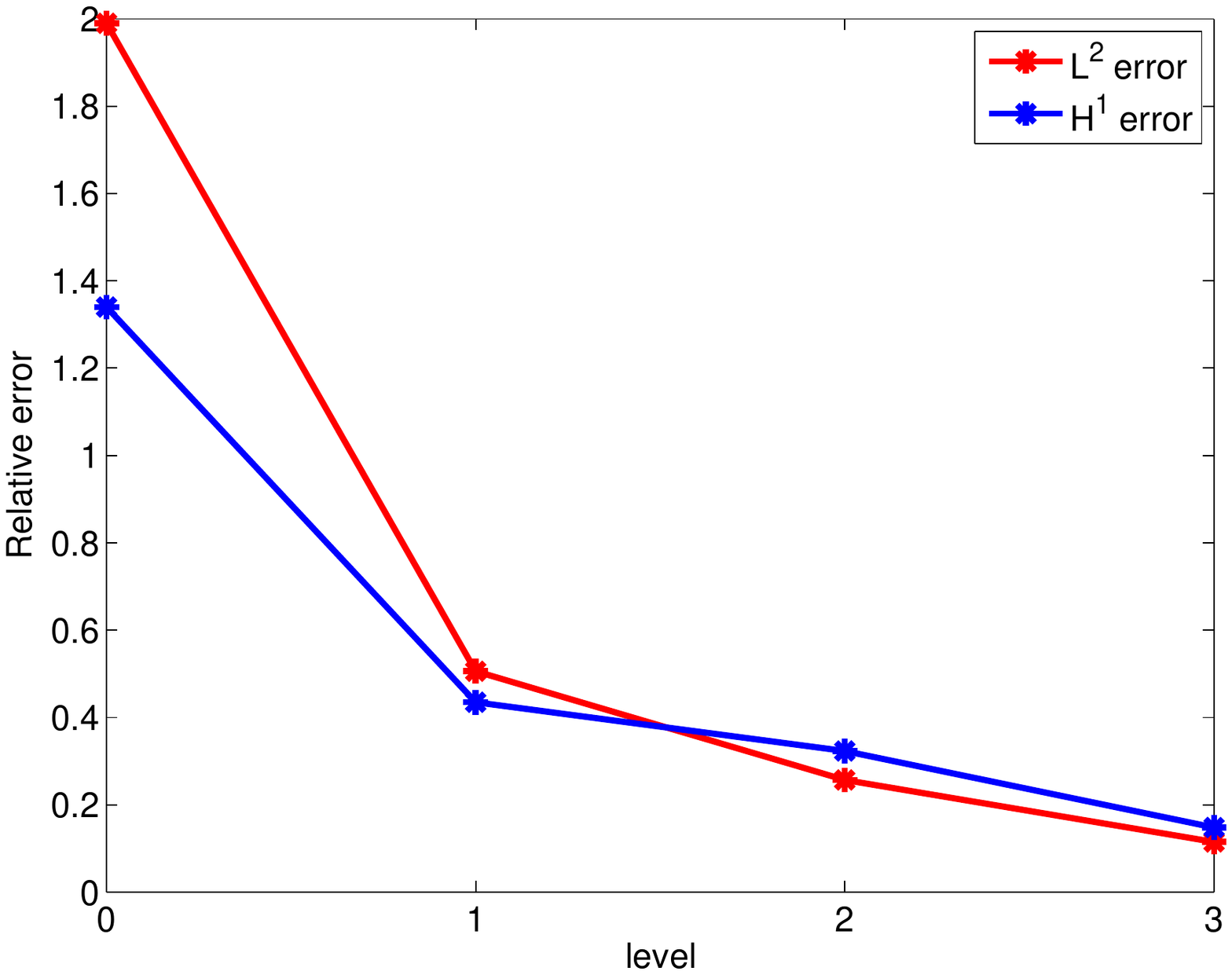}}
	\subfigure[Error for model 2, right source. ]{
		\includegraphics[trim={1cm 7.5cm 1cm 7cm},clip,width=2.5in]{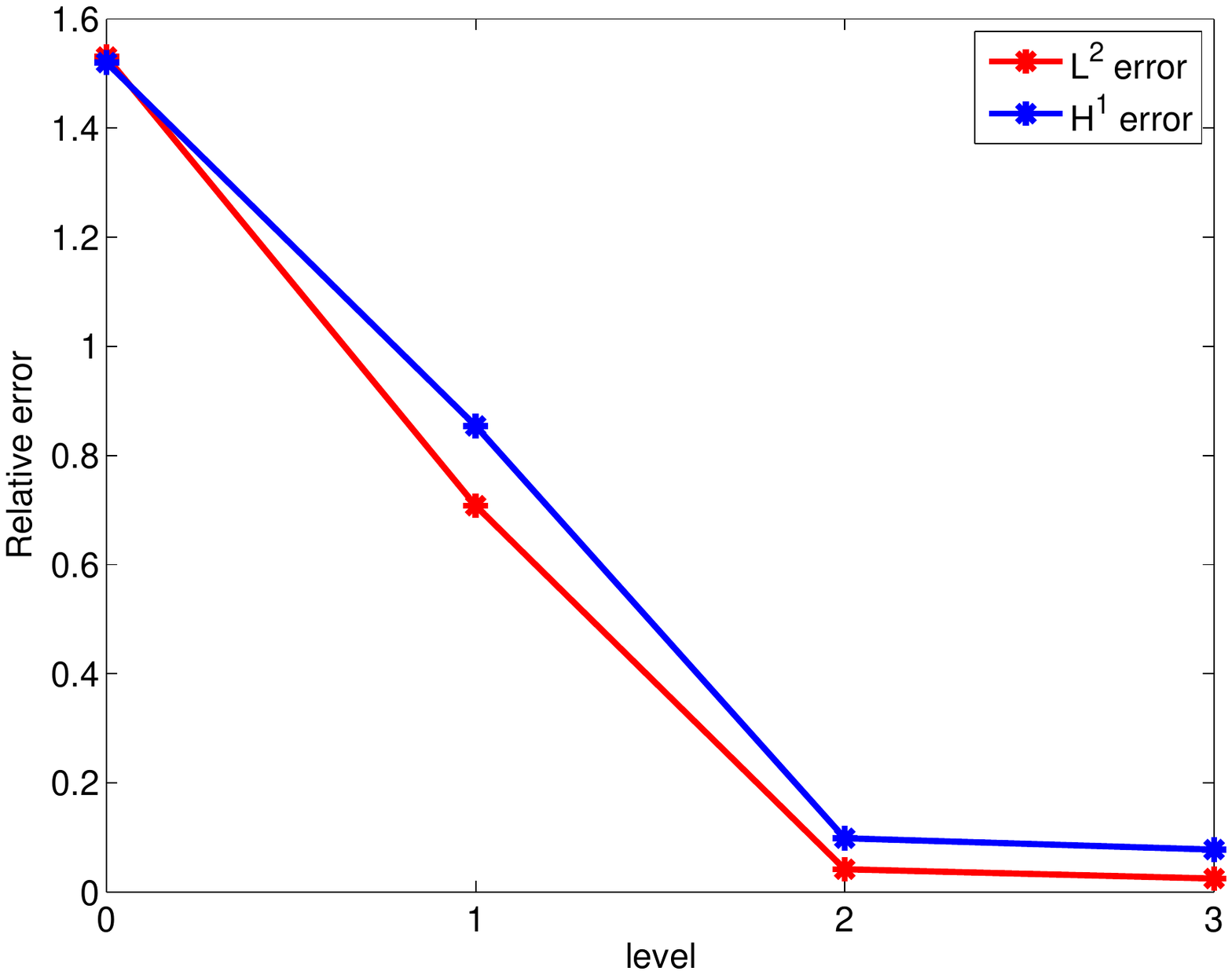}}	
	\caption{Relative error against level, $H=1/10$.}
	\label{fig:er_m2}
\end{figure}

\subsection{Performance of Algorithm \ref{algorithm:wavelet}:
  coarse-scale mesh size $H$}\label{subsec:meshsize}
Earlier we established theoretically that the
error induced by Algorithm \ref{algorithm:wavelet} can attain
$\mathcal{O}(H)$ in Proposition \ref{prop:wavelet-basedconv}, upon the
condition on the coarse mesh size $H$ and the level parameter $\ell$,
cf. \eqref{eq:wavelet-level}.  We now test how the algorithm
performs with respect to a different, finer, coarse-scale mesh, we take its
size $H:=1/20$, so that the coarse mesh $\mathcal{T}_H$ will cross the perforations.

The error decay history for the two different perforated domains, depicted in Figure \ref{fig:testmodel} with centered and right sources, are plotted in Figures \ref{fig:er_m1_H20} and
\ref{fig:er_m2_H20}, respectively. There is error
decay as the level parameter $\ell$ with very rapid decay and we
conclude that multiscale solutions with sufficient accuracy are achieved with the level parameter $\ell=2$.
Comparing with the mesh of $H:=1/10$, Figures \ref{fig:er_m1} and
\ref{fig:er_m2}, we conclude that as the coarse-scale mesh size $H$
decreases, the performance of Algorithm \ref{algorithm:wavelet}
significantly improves; this agrees with the predictions of Proposition \ref{prop:wavelet-basedconv}.
\begin{figure}[H]
	\centering
	\subfigure[Error for model 1, center source.]{
		\includegraphics[trim={1cm 7.5cm 1cm 7cm},clip,width=2.5in]{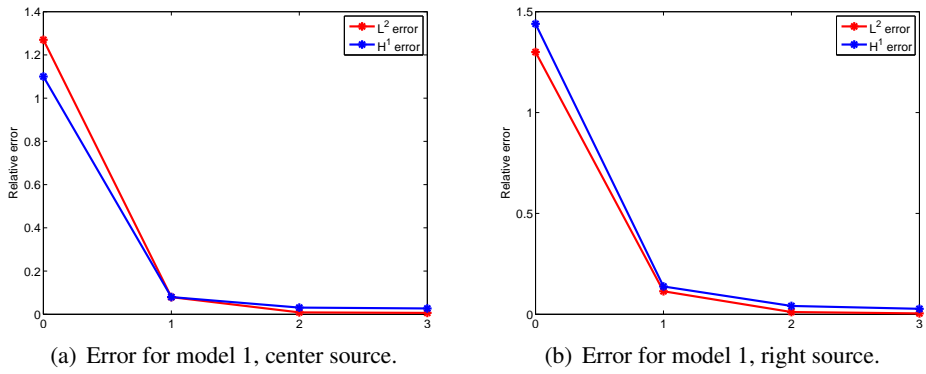}}
	\subfigure[Error for model 1, right source. ]{
		\includegraphics[trim={1cm 7.5cm 1cm 7cm},clip,width=2.5in]{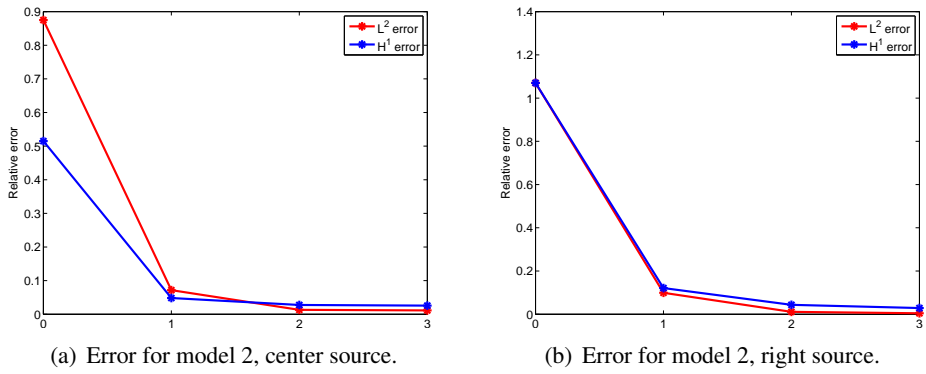}}	
	\caption{Relative error against level, $H=1/20$.}
	\label{fig:er_m1_H20}
\end{figure}

\begin{figure}[H]
	\centering
	\subfigure[Error for model 2, center source.]{
		\includegraphics[trim={1cm 7.5cm 1cm 7cm},clip,width=2.5in]{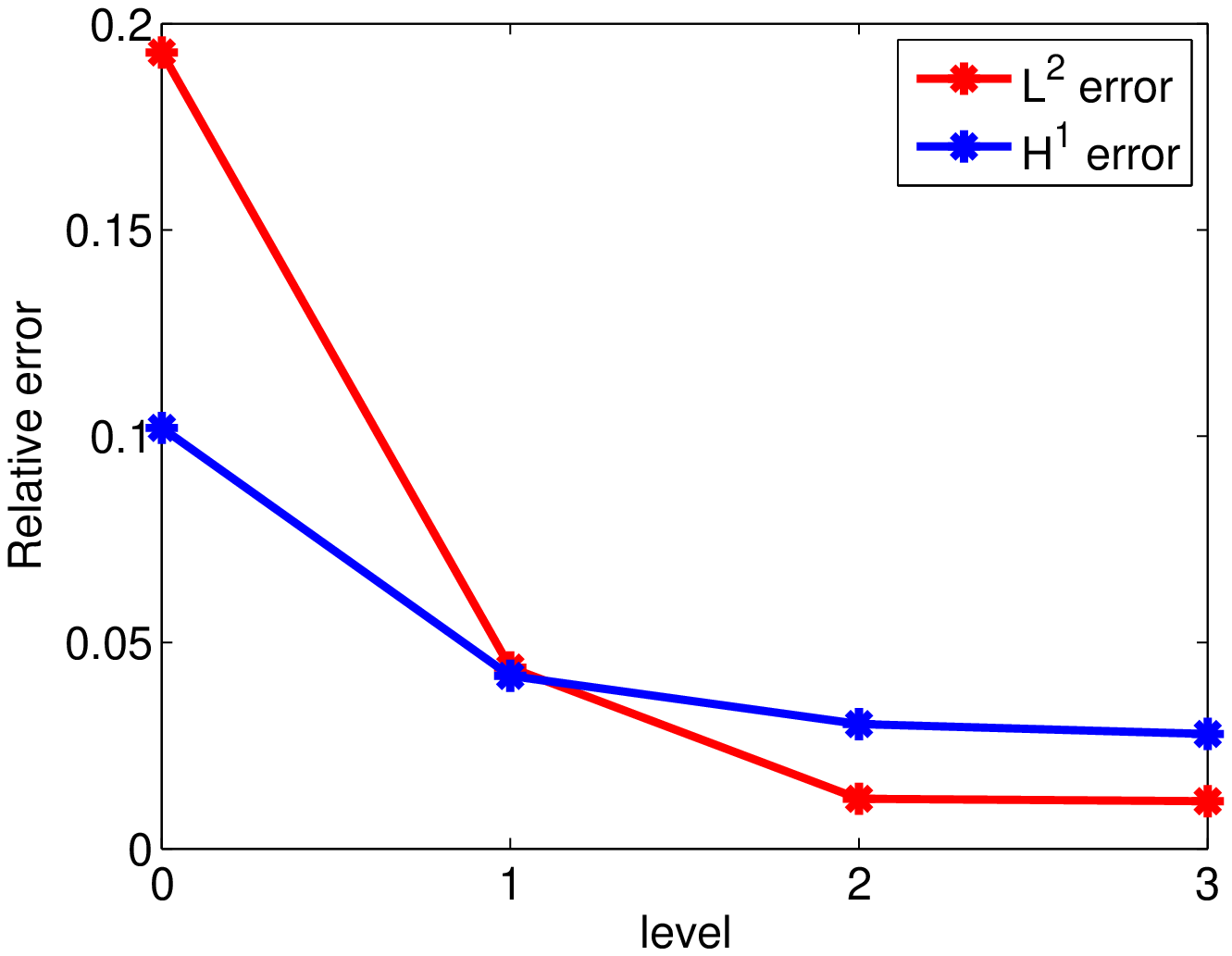}}
	\subfigure[Error for model 2, right source. ]{
		\includegraphics[trim={1cm 7.5cm 1cm 7cm},clip,width=2.5in]{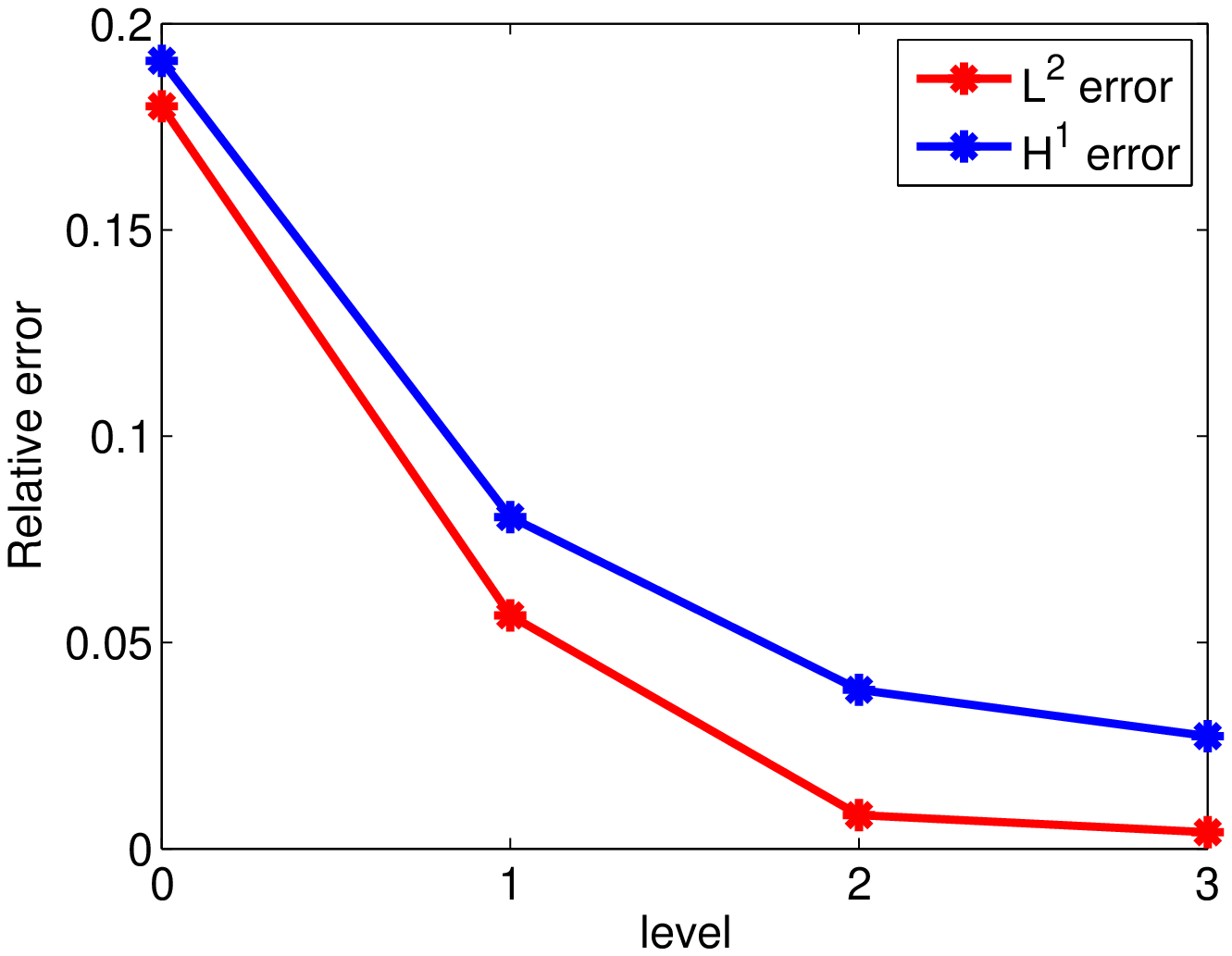}}	
	\caption{Relative error against level, $H=1/20$.}
	\label{fig:er_m2_H20}
\end{figure}

\subsection{Performance of Algorithm \ref{algorithm:wavelet}: random
  perforations}\label{subsec:random}
\begin{figure}[H]
	\centering
	\subfigure[model 3]{
		\includegraphics[trim={4cm 9.5cm 4cm 10cm},clip,width=2.5in]{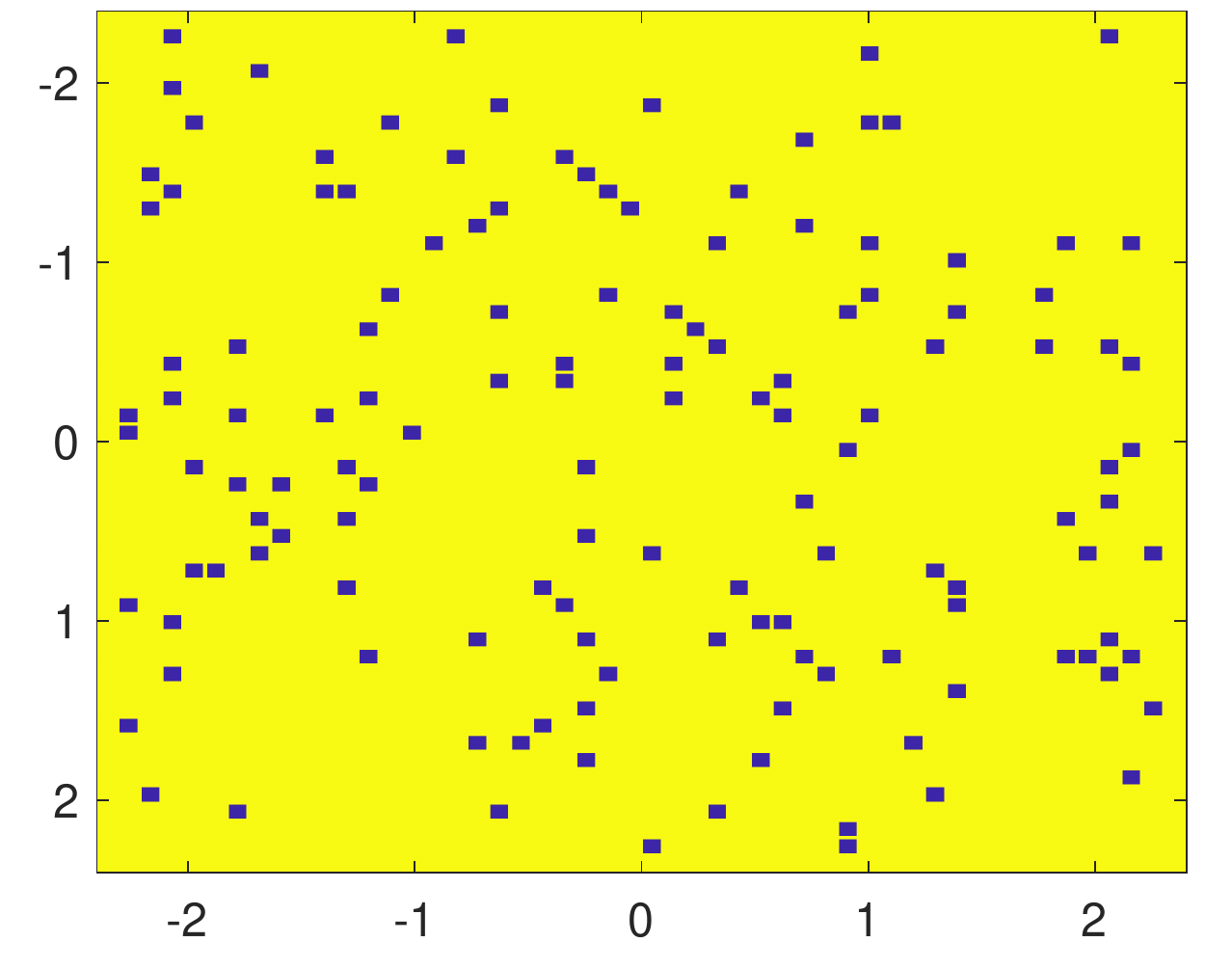}}
	\subfigure[model 4]{
		\includegraphics[trim={4cm 9.5cm 4cm 10cm},clip,width=2.5in]{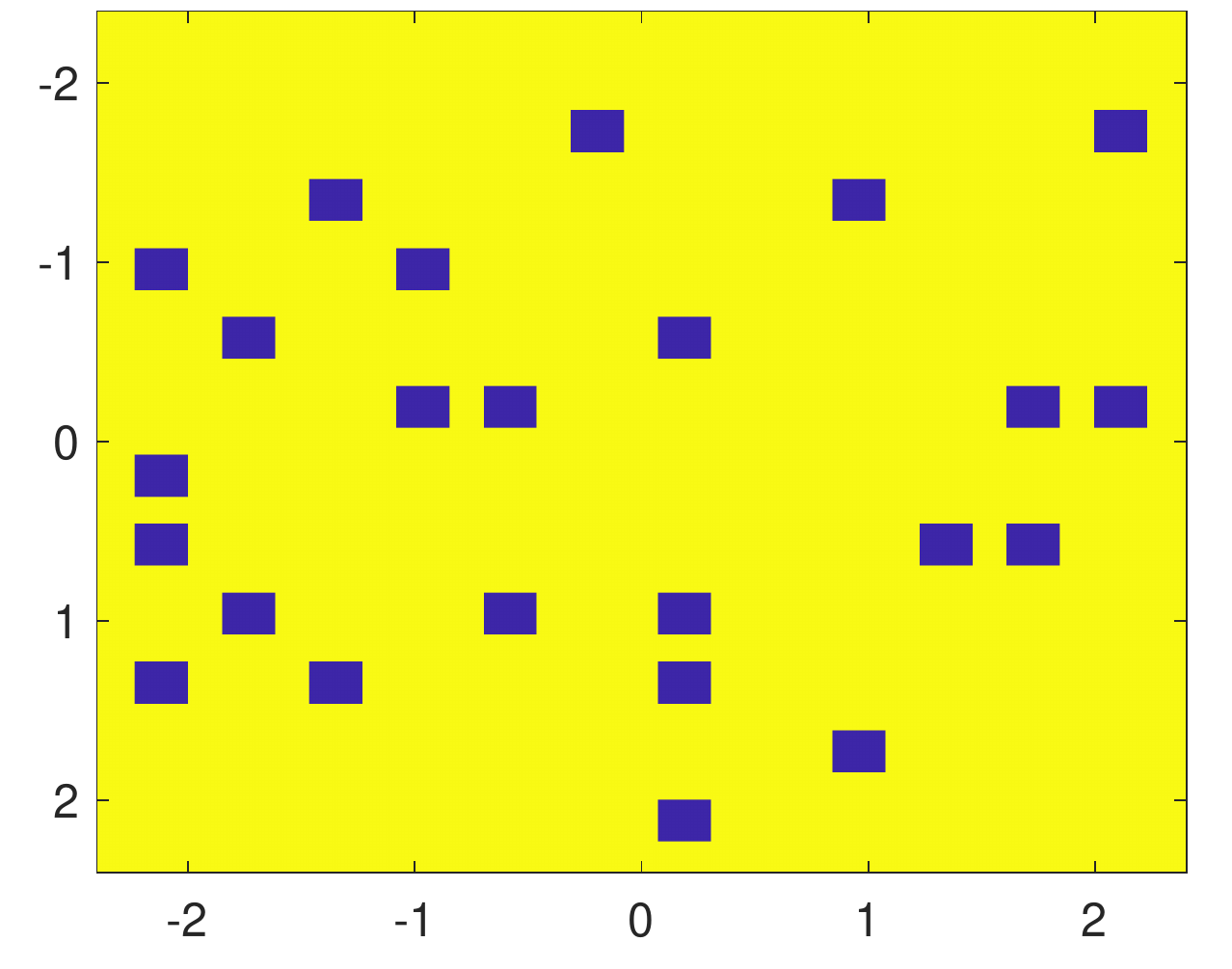}}	
	\caption{Perforated domains, models of locally non-periodic
		crystals}
	\label{fig:testmodel34}
\end{figure}
The motivation for the development of the multiscale WEMsFEM was to
emerging problems in finite periodic crystals, but the methodology is
not reliant on any periodicity and we investigate the algorithm's
performance in more general situations. A natural case to consider is
that of random perforated domains as in Figure
\ref{fig:testmodel34}. The size of the perforation for model 3 is
0.08, and for model 4 is 0.24 and we consider $H=1/20 \text{ and } 1/10$ for model 3 and $H=1/10$ for model 4.

Firstly, we present the reference solution and multiscale solution
from Algorithm \ref{algorithm:wavelet} with model 3 as the perforated
domain, a centered source term, a coarse mesh size $H:=1/10$ and level
parameter $\ell=2$ in Figure \ref{fig:solcompare_rand_c}. The
$L^2(\Omega^{\epsilon})$-relative error is 3.70\%; further decreasing
the coarse mesh size $H$ or increasing the level parameter $\ell$
improves the accuracy as shown in Figure \ref{fig:er_m3}.

\begin{figure}[H]
	\centering
	\subfigure[Reference solution.]{
		\includegraphics[trim={4cm 9.5cm 4cm 10cm},clip,width=2.5in]{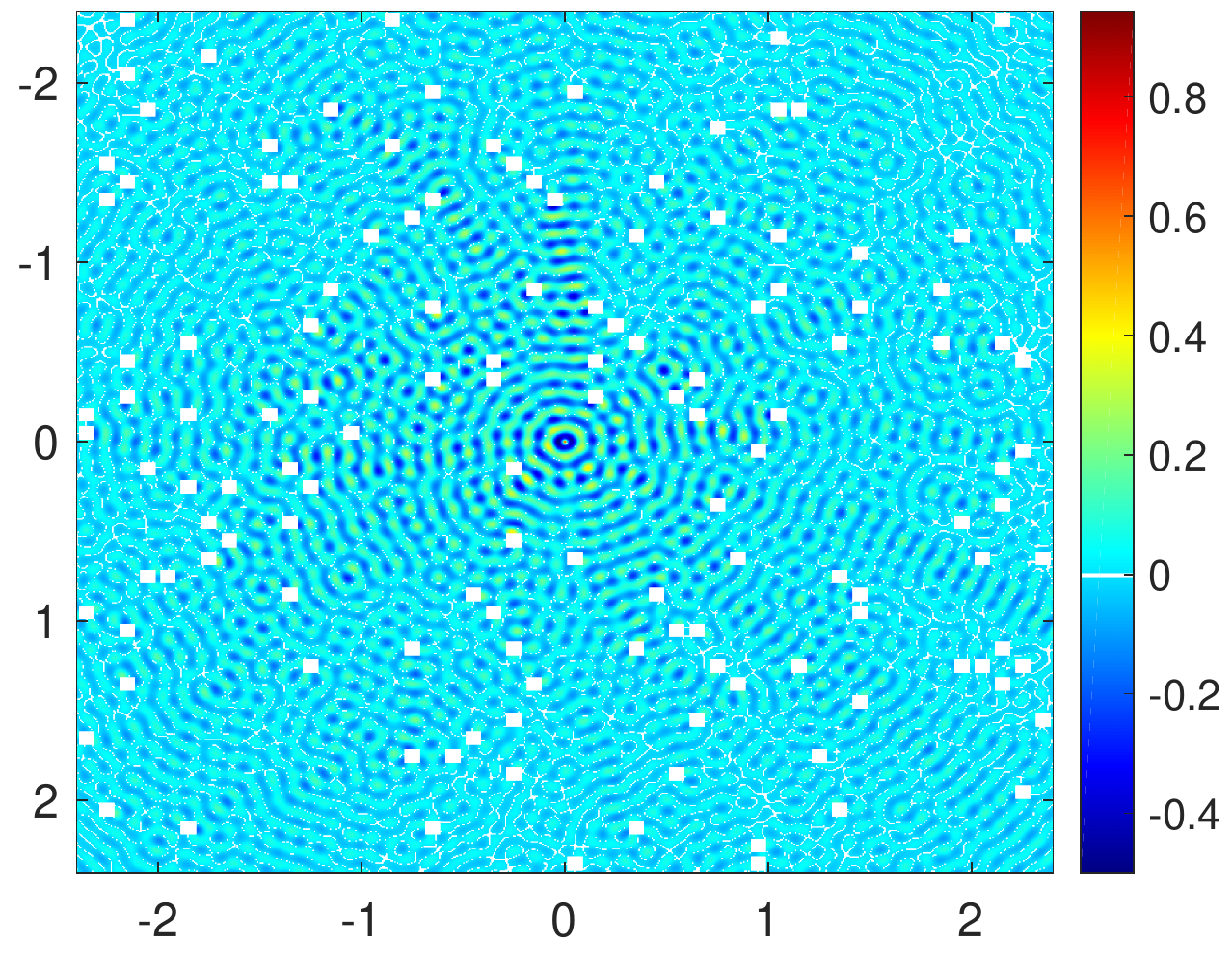}}
	\subfigure[Multiscale solution.]{
		\includegraphics[trim={4cm 9.5cm 4cm 10cm},clip,width=2.5in]{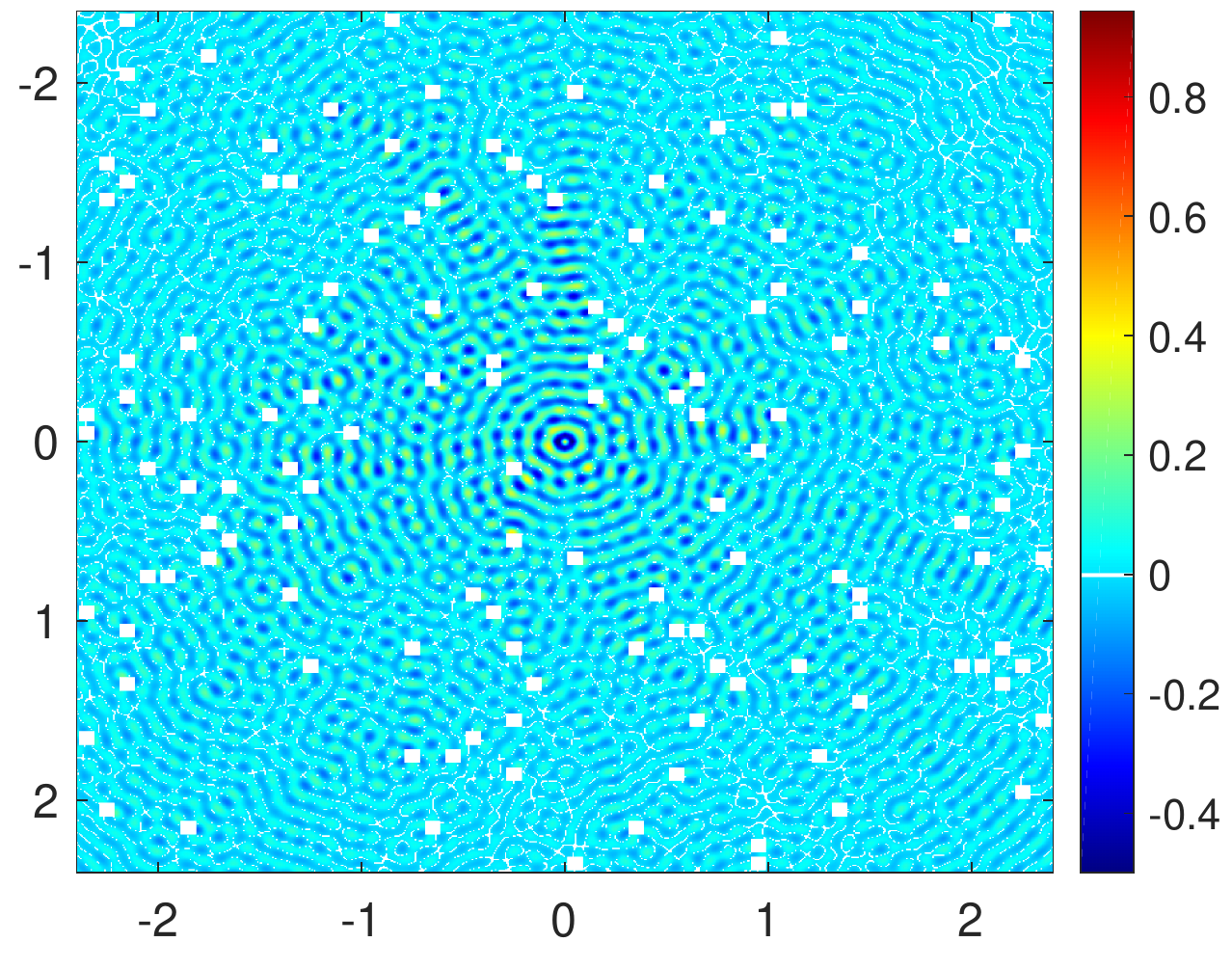}}	
	\caption{Reference solution and multiscale solution for model 3 with centered source, $H:=1/10$ and $\ell=2$. The $L^2(\Omega^{\epsilon})$-relative error is 3.70\%. }
	\label{fig:solcompare_rand_c}
\end{figure}

\begin{figure}[H]
	\centering
	\subfigure[Error for model 3, $H=1/10$.]{
		\includegraphics[trim={1cm 7.5cm 1cm 7cm},clip,width=2.5in]{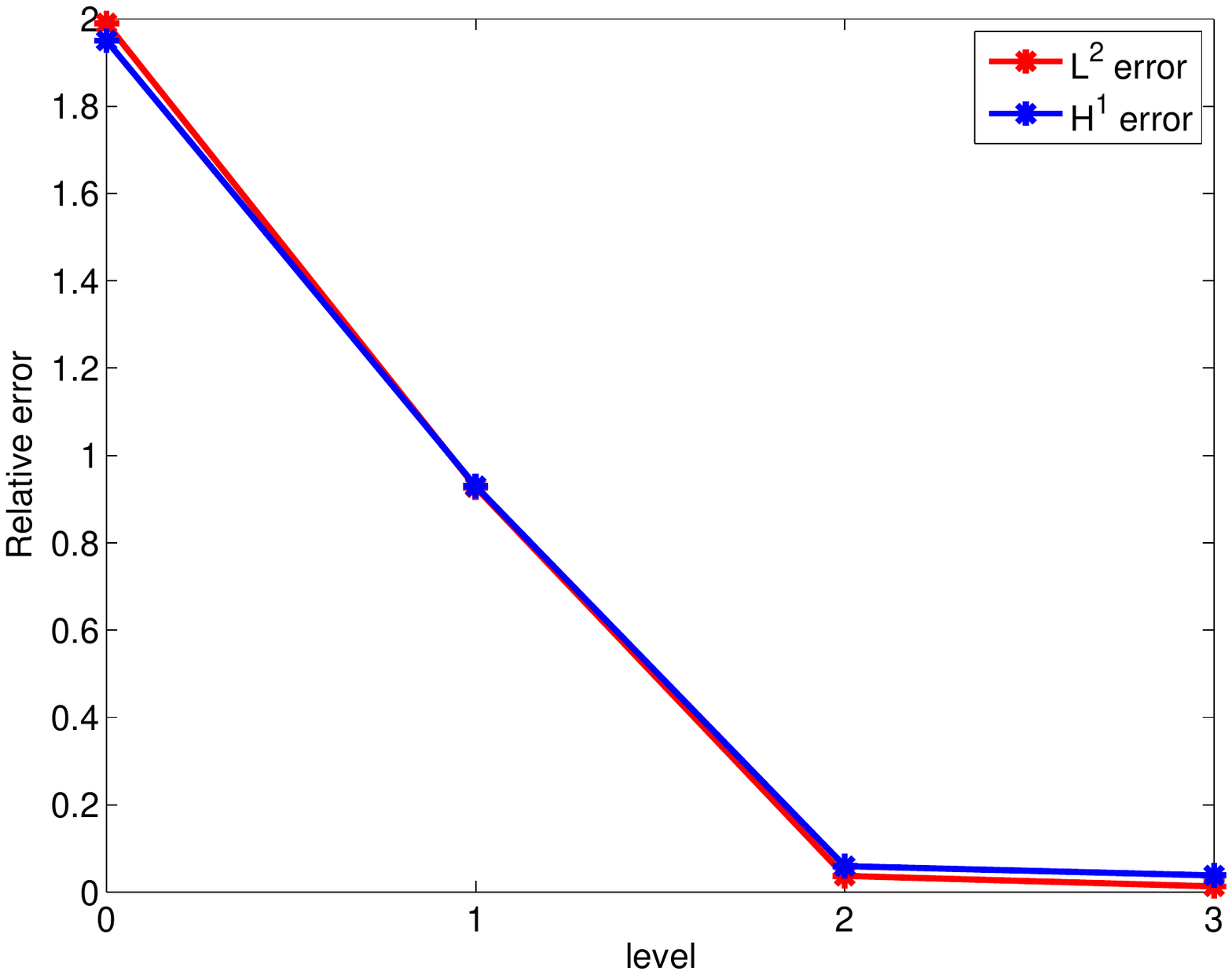}}
	\subfigure[Error for model 3, $H=1/20$. ]{
		\includegraphics[trim={1cm 7.5cm 1cm 7cm},clip,width=2.5in]{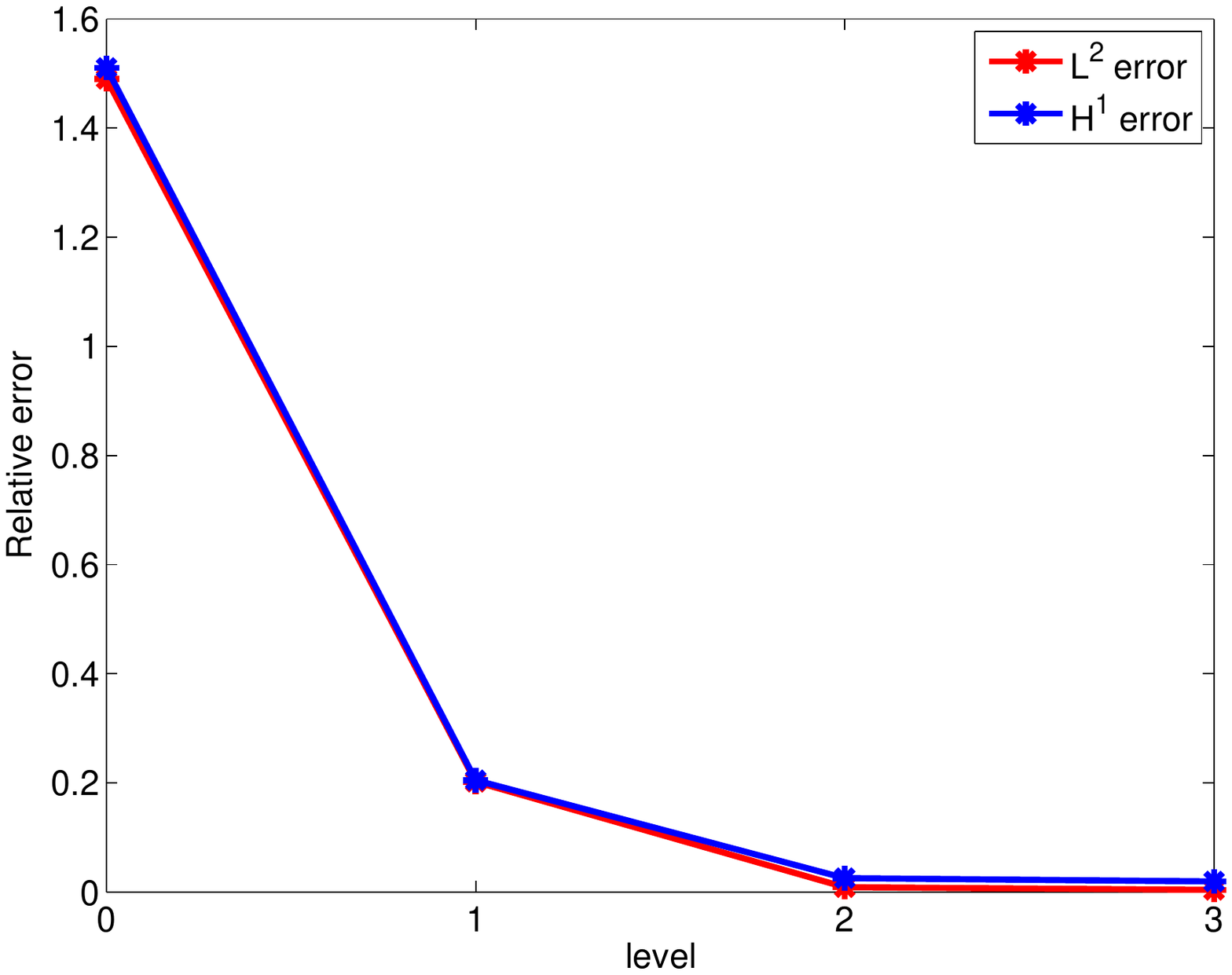}}	
	\caption{Relative error against level, $H=1/20$.}
	\label{fig:er_m3}
\end{figure}

Next, we study the performance of Algorithm \ref{algorithm:wavelet} in
the perforated domain of model 4. The perforations in model 4 cross the neighborhood
boundary and we depict the resulting four coarse neighborhood
$\omega_i$ in Figure \ref{fig:localmodel4}. The local solvers in
Algorithm \ref{algorithm:wavelet} are now defined in some L-shaped
domains and consequently, the perforated domain of model 4 is much
more challenging numerically. 

\begin{figure}[H]
\centering
\begin{tabular}{c c}
	\subfigure{
		\includegraphics[trim={4cm 9.5cm 4cm 10cm},clip,width=2.5in]{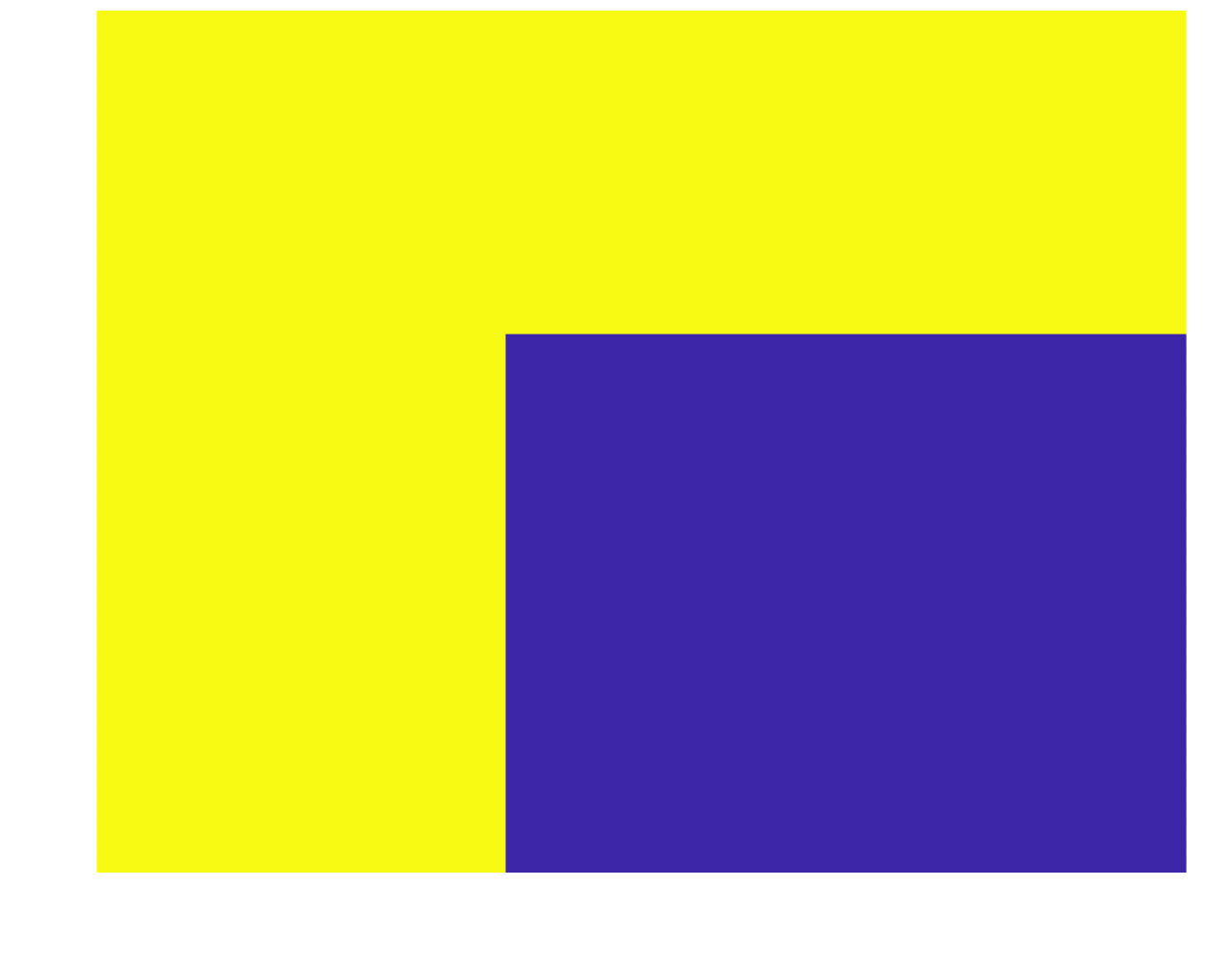}}&
	\subfigure{
		\includegraphics[trim={4cm 9.5cm 4cm 10cm},clip,width=2.5in]{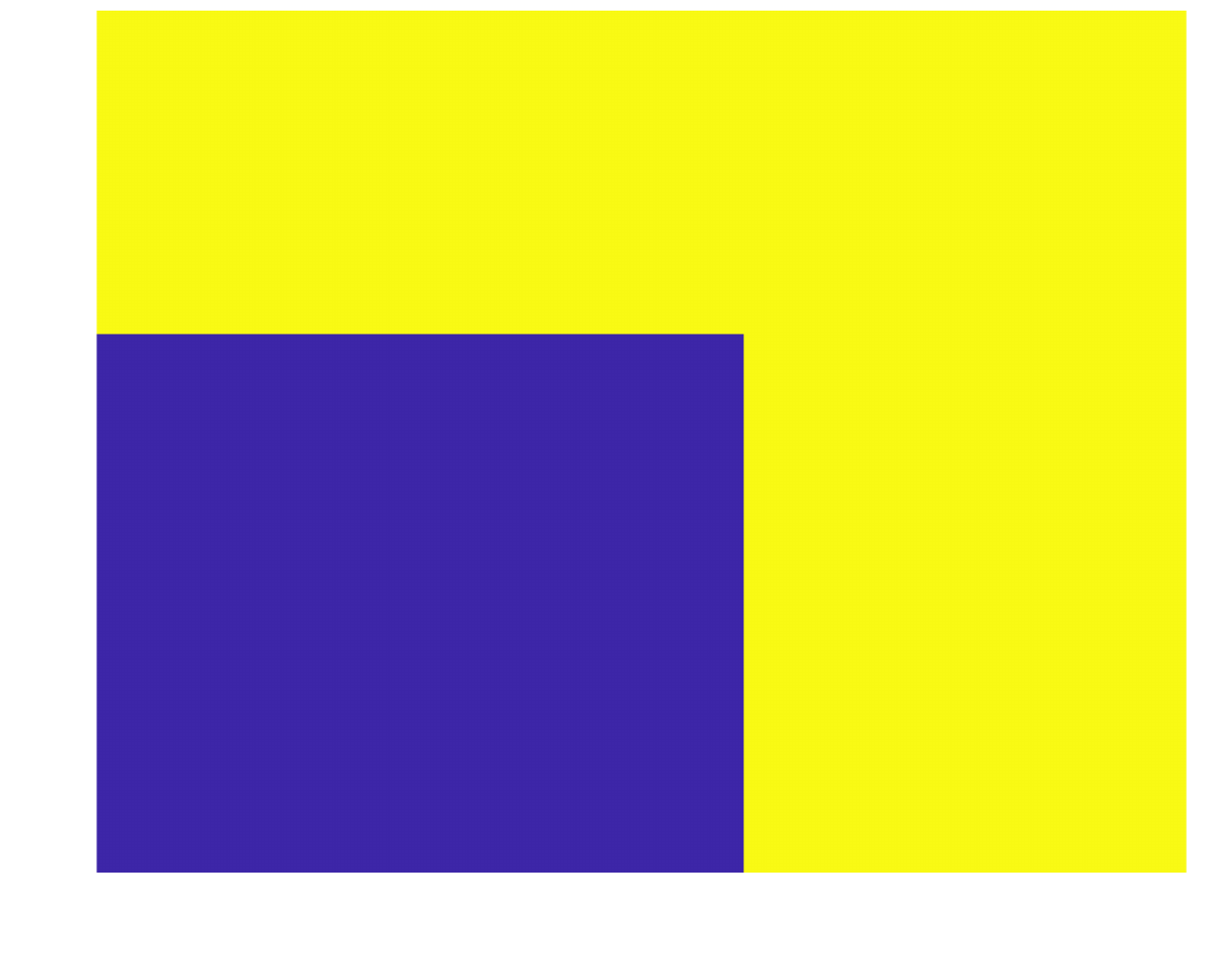}}	\\
	\subfigure{
	\includegraphics[trim={4cm 9.5cm 4cm 10cm},clip,width=2.5in]{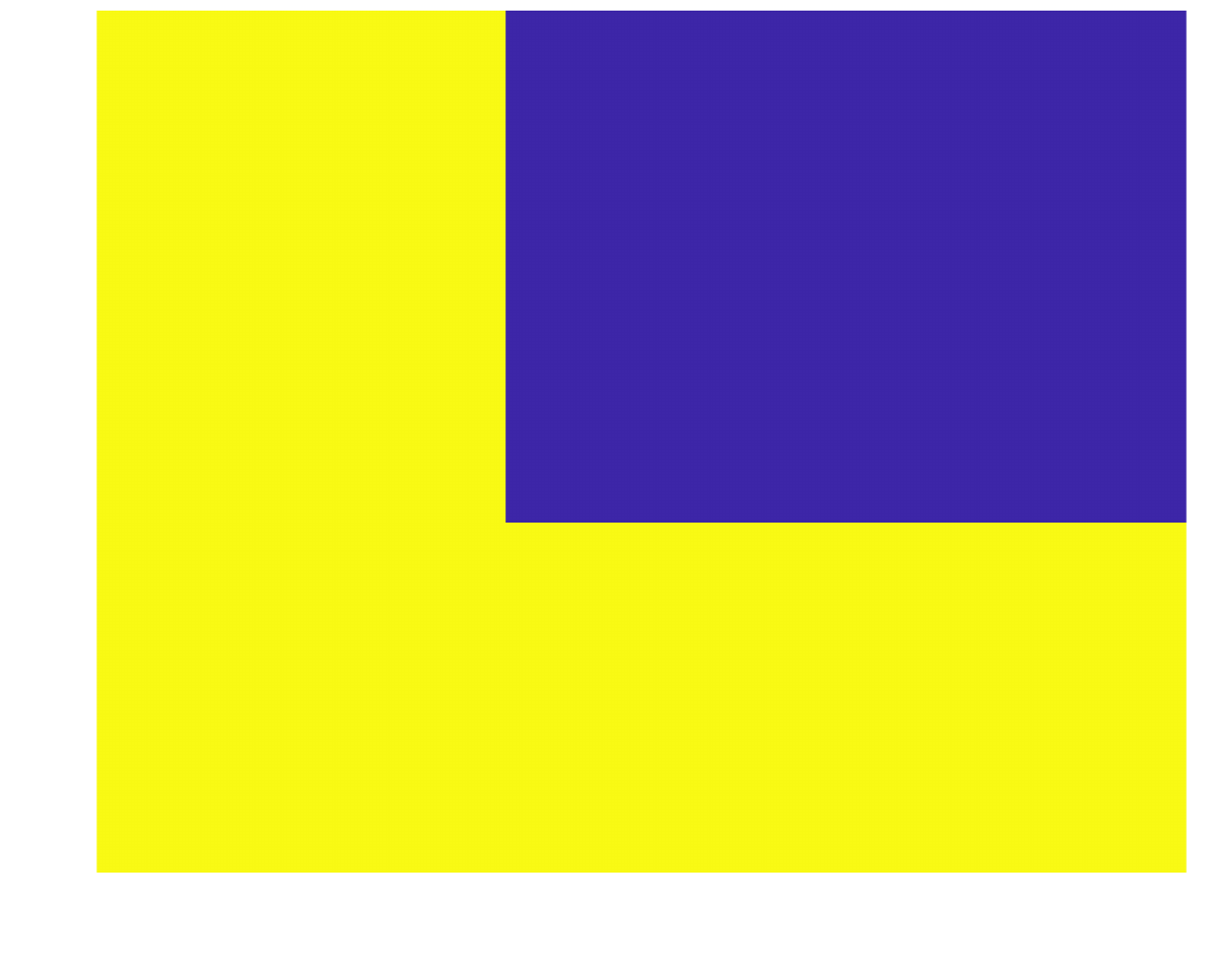}}&
\subfigure{
	\includegraphics[trim={4cm 9.5cm 4cm 10cm},clip,width=2.5in]{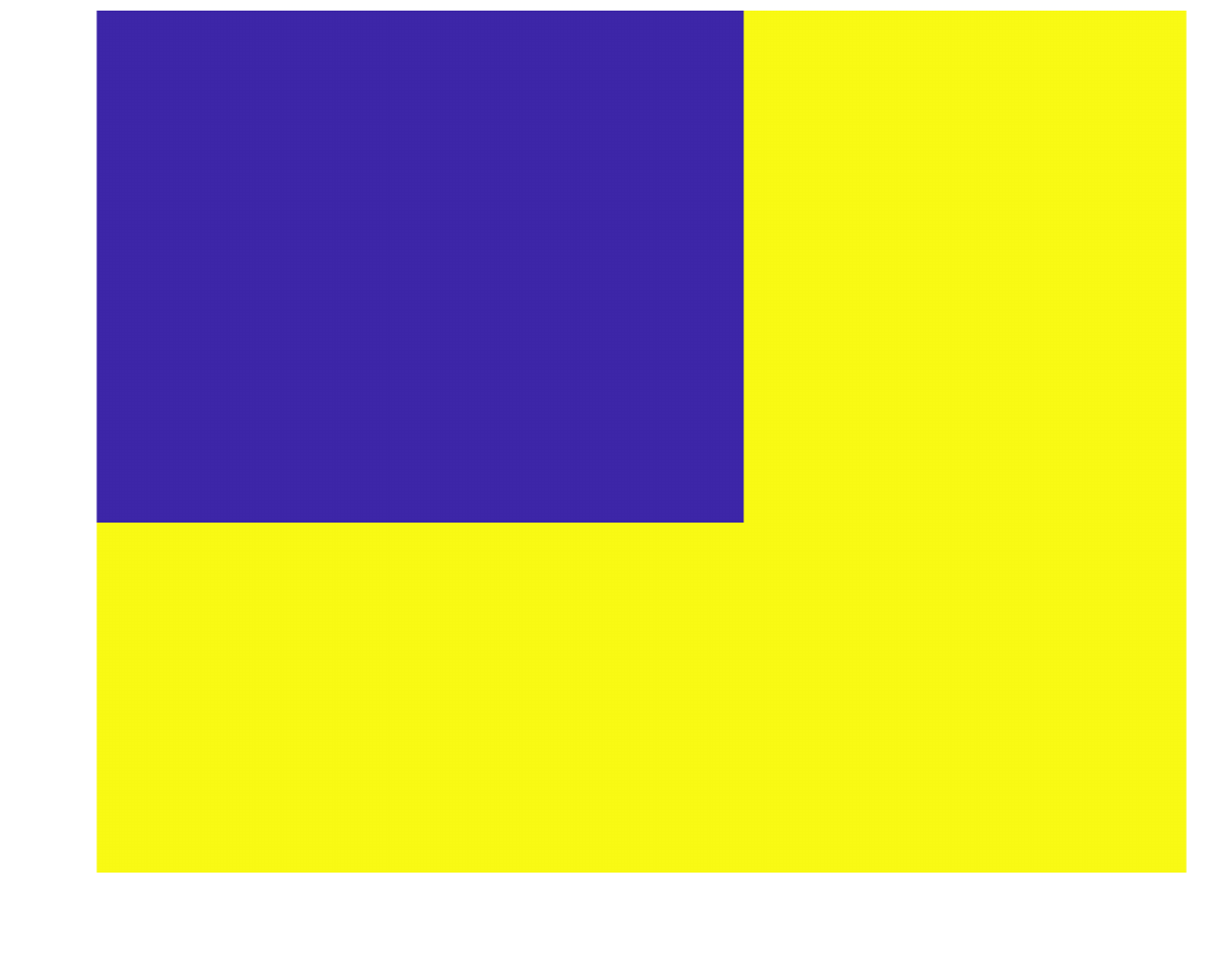}}	
	\end{tabular}
	\caption{Examples of  neighborhoods with local perforation in model 4}
	\label{fig:localmodel4}
\end{figure}

Nonetheless, we observe similar performance as for model 3, with a
centered source, $H:=1/10$ and $\ell=2$ we show the results in in
Figure \ref{fig:solcompare_rand_largec}; the corresponding
$L^2(\Omega^{\epsilon})$-relative error is 3.82\%. Analogously to
model 3, further increases in the level parameter $\ell$, or decrease
in the coarse grid size $H$, further improve the performance of our algorithm, see Figure \ref{fig:er_m4} for more details.

\begin{figure}[H]
	\centering
	\subfigure[Reference solution.]{
		\includegraphics[trim={4cm 9.5cm 4cm 10cm},clip,width=2.5in]{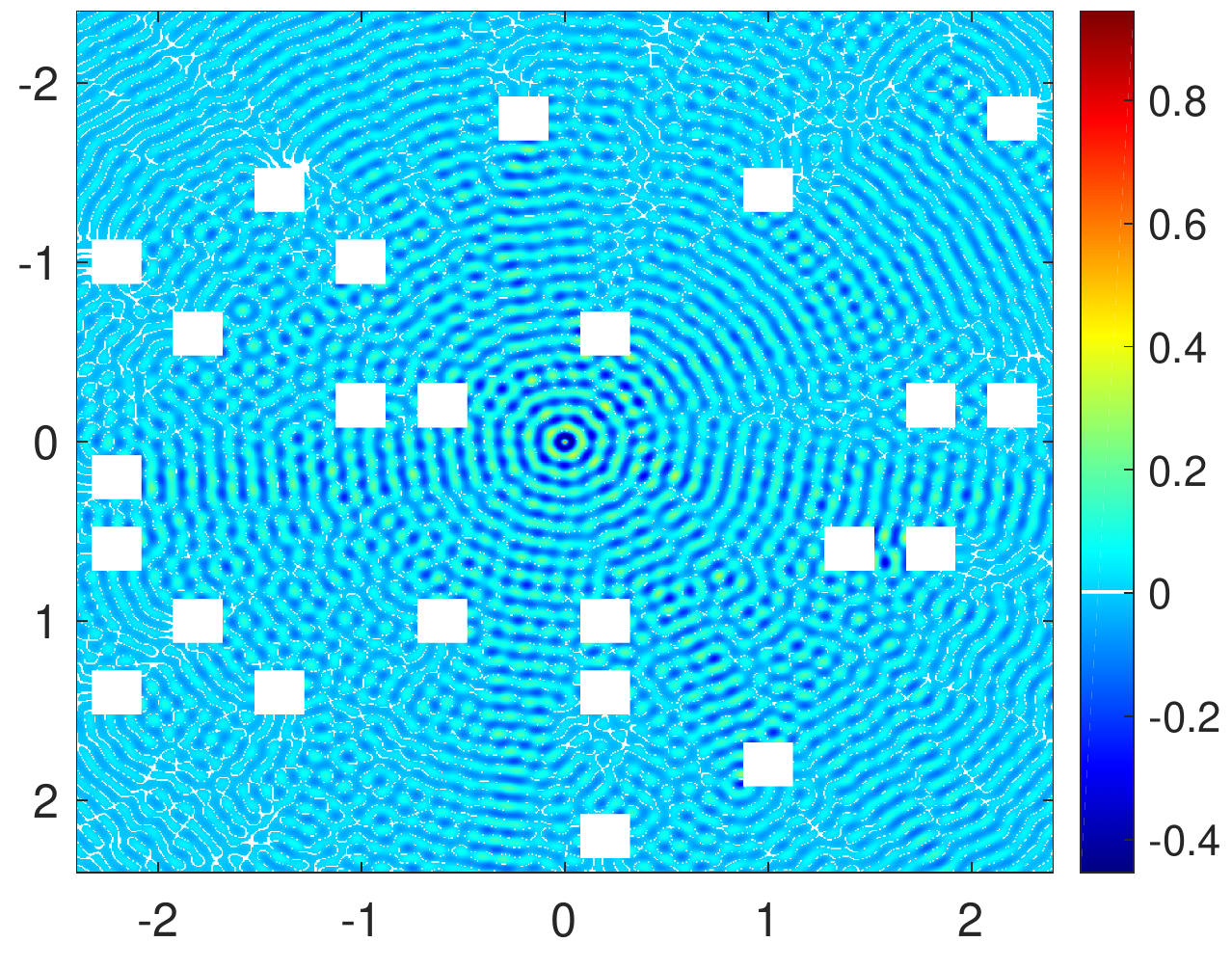}}
	\subfigure[Multiscale solution.]{
		\includegraphics[trim={4cm 9.5cm 4cm 10cm},clip,width=2.5in]{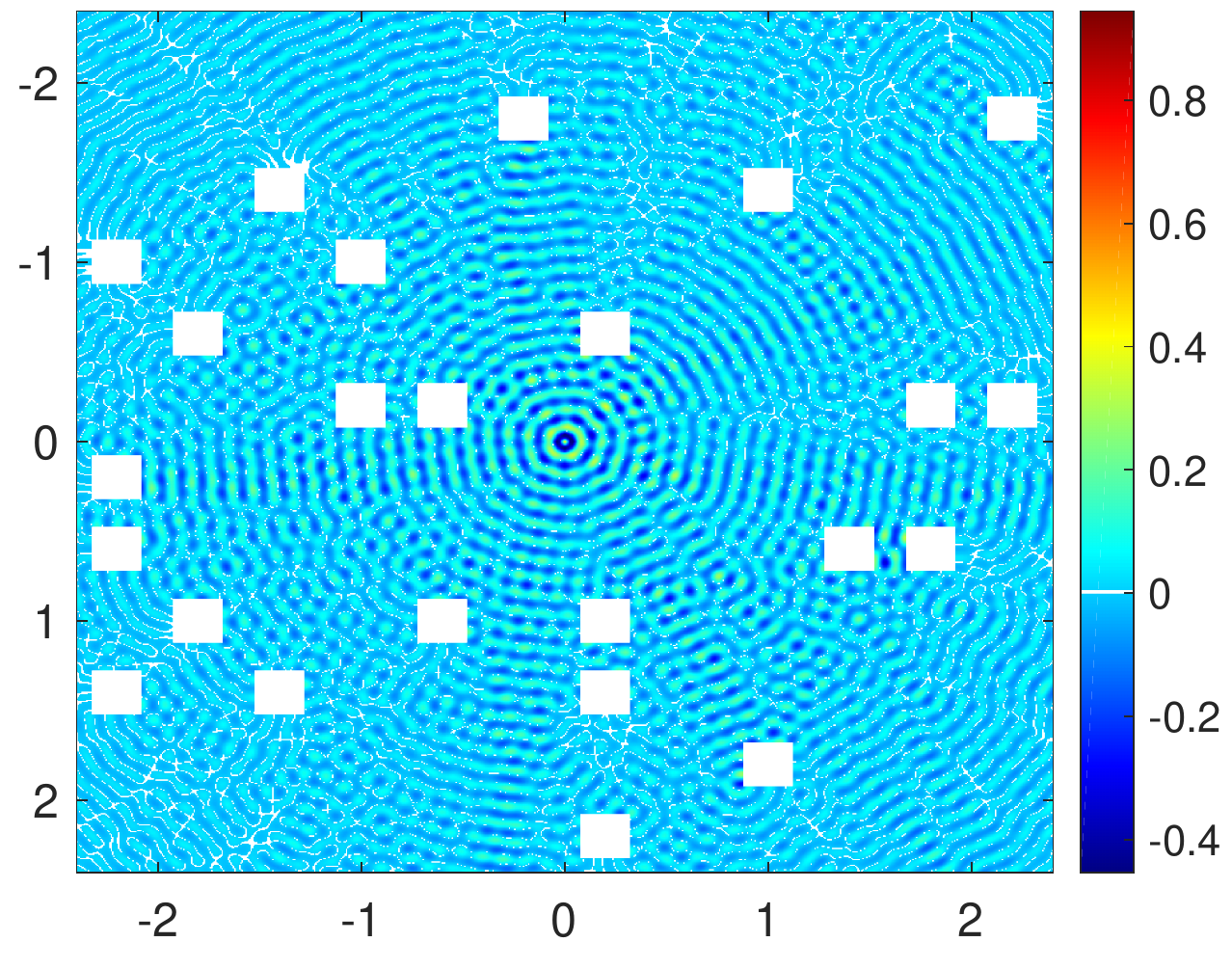}}	
	\caption{Reference solution and multiscale solution for model 4 with centered source, $H:=1/10$ and $\ell=2$. The $L^2(\Omega^{\epsilon})$-relative error is 3.82\%. }
	\label{fig:solcompare_rand_largec}
\end{figure}

\begin{figure}[H]
	\centering
		\includegraphics[trim={1cm 7.5cm 1cm 7cm},clip,width=2.5in]{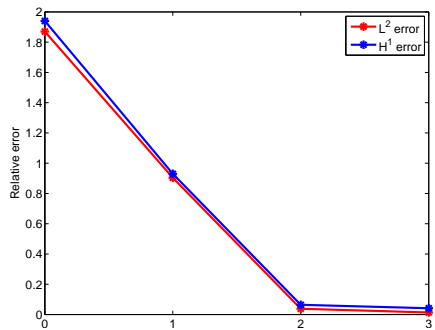}
		\caption{Error for model 4, $H=1/10$, center source.}
	\label{fig:er_m4}
\end{figure}

\section{Conclusion}\label{sec:conclusion}
We demonstrate that the Wavelet-based Edge Multiscale Finite Element
Method (WEMsFEM) for
Helmholtz problems in perforated domains, with possibly large
wavenumbers, are an effective alternative to standard Finite Element
methods with advantages for multiscale problems. Such problems
 have many applications in photonic and phononic crystals and having
 efficient, faster, alternatives to the existing finite element
 approaches is much needed particularly for crystals created from many
 cells and operating at high frequencies.

We have created both the required theory, and error estimates, and
then tested  the convergence analysis and numerical performance of the
algorithm against examples of physical interest. Under the usual
resolution assumption that the product of the coarse-scale mesh size
$H$ and the wavenumber $k$ is bounded above by a certain constant and
the level parameter $\ell$ is sufficiently large, we prove
$\mathcal{O}(H)$ convergence of our methods. Our theoretical results
are supported by extensive 2-d numerical simulations. The success of
this two-dimensional study has motivated further practical tests of
this algorithm for 3-d Helmholtz problems in perforated domains and
these are currently under investigation.

\appendix

\section{Very-weak solutions to the Helmholtz problem}
We establish in this section an a priori estimate, cf. Theorem \ref{lem:very-weak}, which is utilized in the proof to Theorem \ref{thm:proj}. Throughout this section, $\omega_i$ is one coarse neighborhood as defined in \eqref{neighborhood} for all $i=1,2,\cdots,N$.

Let $g\in L^2(\Omega^{\epsilon}\cap\omega_i)$, and $v\in H^{1/2}(\Omega^{\epsilon}\cap\omega_i)$ be the solution to the following problem:
\begin{equation}\label{eq:pde-very}
\left\{
\begin{aligned}
\mathcal{L}_i v&:=\Delta v+k^2 v=0&&\mbox{in }\Omega^{\epsilon}\cap\omega_i,\\
\frac{\partial v}{\partial n}&=0 &&\mbox{ on }\partial Q_1^{\epsilon}\cap \omega_i.\\
 v&=g &&\mbox{on }\partial\omega_i\backslash\partial Q_1^{\epsilon}.
\end{aligned}
\right.
\end{equation}
\begin{align}\label{eq:test-space}
X(\omega_i):=\{z\in H^1(\Omega^{\epsilon}\cap\omega_i):\mathcal{L}_i z\in L^2(\Omega^{\epsilon}\cap\omega_i), \frac{\partial v}{\partial n}=0 \text{ on }\partial Q_1^{\epsilon}\cap \omega_i\text{ and }v=0\text{ on }\partial\omega_i\backslash\partial Q_1^{\epsilon}\}.
\end{align}
This test space $X(\omega_i)$ is endowed with the norm $\|\cdot\|_{X(\omega_i)}$:
\[
\forall z\in X(\omega_i):\|z\|_{X(\omega_i)}^2=\int_{\Omega^{\epsilon}\cap\omega_i}|\nabla z|^2\dx+\|\mathcal{L}_i z\|_{L^2(\Omega^{\epsilon}\cap\omega_i)}^2.
\]
Then we propose the following weak formulation corresponding to Problem \eqref{eq:pde-very}:
seeking $v\in L^2(\Omega^{\epsilon}\cap\omega_i)$ such that
\begin{align}\label{eq:nonstd-variational}
\int_{\Omega^{\epsilon}\cap\omega_i}v\mathcal{L}_i  z\dx=\int_{\partial \omega_i\backslash \partial Q_1^{\epsilon}}g\frac{\partial z}{\partial n}\mathrm{d}s \quad\text{ for all }z\in X(\omega_i).
\end{align}
\begin{theorem}\label{lem:very-weak}
Let the Scale Resolution Assumption \ref{ass:resolution} be valid. Given $g\in L^2(\Omega^{\epsilon}\cap\omega_i)$. Let $v$ be the solution to \eqref{eq:pde-very}. Then there holds
\begin{align*}
\normL{v}{\Omega^{\epsilon}\cap\omega_i}&\leq\Cw \Ce H^{1/2}\|g\|_{L^2(\partial \omega_i\backslash\partial Q_1^{\epsilon})}\\
\normL{\chi_i\nabla v}{\Omega^{\epsilon}\cap\omega_i}&
\leq\Cw \Ce H^{-1/2}\|g\|_{L^2(\partial \omega_i\backslash\partial Q_1^{\epsilon})}.
\end{align*}
\end{theorem}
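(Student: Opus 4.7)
My plan is to establish both estimates by a duality argument for the first and by an energy identity against the partition of unity function for the second. The very-weak variational formulation \eqref{eq:nonstd-variational} is essentially tailor-made for the dual argument.

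\emph{$L^2$ estimate via duality.} First I would fix an arbitrary test function $\phi\in L^2(\Omega^{\epsilon}\cap\omega_i)$ and consider the dual problem: find $z\in X(\omega_i)$ such that
\begin{equation*}
\left\{
\begin{aligned}
\mathcal{L}_i z &= \phi &&\text{ in } \Omega^{\epsilon}\cap\omega_i,\\
\partial_n z &= 0 &&\text{ on } \partial Q_1^{\epsilon}\cap\omega_i,\\
z &= 0 &&\text{ on } \partial\omega_i\setminus \partial Q_1^{\epsilon}.
\end{aligned}
\right.
\end{equation*}
Testing this PDE against $z$ itself, using the same Poincar\'e / H\"older estimate as in Lemma \ref{lem:parta}, and invoking the Scale Resolution Assumption \ref{ass:resolution}, I obtain $\|\nabla z\|_{L^2(\Omega^{\epsilon}\cap\omega_i)} \leq \Ce\Cpoinn{\omega_i}{1/2} H \|\phi\|_{L^2(\Omega^{\epsilon}\cap\omega_i)}$ and hence $\|z\|_{H^1(\Omega^{\epsilon}\cap\omega_i)} \lesssim \Ce H \|\phi\|_{L^2(\Omega^{\epsilon}\cap\omega_i)}$, in particular $z\in X(\omega_i)$. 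The very-weak variational identity \eqref{eq:nonstd-variational} then gives
\[
(v,\phi)_{\Omega^{\epsilon}\cap\omega_i} = \int_{\partial\omega_i\setminus\partial Q_1^{\epsilon}} g\,\partial_n z\,\mathrm{d}s \leq \|g\|_{L^2(\partial\omega_i\setminus\partial Q_1^{\epsilon})}\,\|\partial_n z\|_{L^2(\partial\omega_i\setminus\partial Q_1^{\epsilon})}.
\]

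\emph{The main obstacle — trace bound on $\partial_n z$.} To close the duality argument I need to show $\|\partial_n z\|_{L^2(\partial\omega_i\setminus \partial Q_1^{\epsilon})} \leq \Cw \Ce H^{1/2} \|\phi\|_{L^2(\Omega^{\epsilon}\cap\omega_i)}$. This is the hard part because $\Omega^{\epsilon}\cap\omega_i$ is non-convex with mixed boundary conditions, so I cannot simply invoke full $H^2$ elliptic regularity. The plan is a Rellich-type multiplier identity: pick a smooth vector field $m$ with $m\cdot n \gtrsim 1$ on $\partial\omega_i\setminus \partial Q_1^{\epsilon}$, $m\cdot n = 0$ on $\partial Q_1^{\epsilon}\cap\omega_i$, and $|m|\lesssim H$, $|\nabla m|\lesssim 1$. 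Integrating $(\Delta z + k^2 z)(m\cdot\nabla\bar z) = \phi (m\cdot\nabla\bar z)$ by parts produces
\[
\tfrac{1}{2}\int_{\partial\omega_i\setminus\partial Q_1^{\epsilon}} (m\cdot n)|\partial_n z|^2\,\mathrm{d}s \lesssim \|\nabla z\|_{L^2(\Omega^{\epsilon}\cap\omega_i)}^2 + k^2\|z\|_{L^2(\Omega^{\epsilon}\cap\omega_i)}^2 + H\,\|\phi\|_{L^2(\Omega^{\epsilon}\cap\omega_i)}\|\nabla z\|_{L^2(\Omega^{\epsilon}\cap\omega_i)},
\]
so that, after inserting the $H^1$ bounds on $z$ derived from the Scale Resolution Assumption, $\|\partial_n z\|_{L^2(\partial\omega_i\setminus\partial Q_1^{\epsilon})}^2 \lesssim \Ce^2 H\,\|\phi\|_{L^2}^2$, which is exactly the right scaling. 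Substituting into the duality identity and taking the supremum over $\phi$ with $\|\phi\|_{L^2}=1$ yields the claimed $L^2$ estimate on $v$.

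\emph{Gradient estimate via the partition-of-unity multiplier.} For the second bound I would multiply $\mathcal{L}_i v = 0$ by $\chi_i^2 \bar v$ and integrate over $\Omega^{\epsilon}\cap\omega_i$. The key structural fact is that $\chi_i$ vanishes on $\partial\omega_i\setminus\partial Q_1^{\epsilon}$ by the construction \eqref{pou}, while $\partial_n v = 0$ on $\partial Q_1^{\epsilon}\cap\omega_i$ by \eqref{eq:pde-very}; therefore the boundary contributions in Green's identity drop, leaving
\[
\int_{\Omega^{\epsilon}\cap\omega_i} \chi_i^2|\nabla v|^2\,\mathrm{d}x = k^2\int_{\Omega^{\epsilon}\cap\omega_i} \chi_i^2 |v|^2\,\mathrm{d}x - 2\int_{\Omega^{\epsilon}\cap\omega_i} \chi_i\,\bar v\,\nabla\chi_i\cdot\nabla v\,\mathrm{d}x.
\]
Using $\|\nabla\chi_i\|_{L^\infty}\lesssim H^{-1}$, Young's inequality absorbs the cross term into the left-hand side, giving $\|\chi_i\nabla v\|_{L^2}^2 \lesssim (k^2 + H^{-2})\|v\|_{L^2}^2 \lesssim H^{-2}\|v\|_{L^2}^2$ by Assumption \ref{ass:resolution}. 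Combining this with the $L^2$ estimate already obtained produces the factor $H^{-1}\cdot H^{1/2}=H^{-1/2}$, completing the proof.
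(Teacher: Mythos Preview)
Your overall architecture matches the paper: a duality argument for the $L^2$ bound (the paper defers the details to \cite{GL18} together with Theorem~\ref{thm:pw-Regularity}), and the $\chi_i^2$-multiplier identity for the weighted gradient bound, which is identical to the paper's argument line for line.

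The gap is in your normal-trace bound on $\partial_n z$. First, the conditions you impose on the multiplier $m$ are internally inconsistent: $m\cdot n\gtrsim 1$ on $\partial\omega_i\setminus\partial Q_1^\epsilon$ contradicts $|m|\lesssim H$ for small $H$. More seriously, even after repairing the scaling (say $m\cdot n\gtrsim cH$ on the outer edges, as with $m=x-O_i$), the tangentiality requirement $m\cdot n=0$ on $\partial Q_1^\epsilon\cap\omega_i$ is incompatible with $|\nabla m|\lesssim 1$ whenever a perforation lies at distance $O(\epsilon)$ from $\partial\omega_i$; the setting allows this, since the coarse mesh need not resolve the perforations. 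In that regime any admissible $m$ must satisfy $|\nabla m|\gtrsim \epsilon^{-1}$, and your volume terms blow up. If instead you drop tangentiality and use $m=x-O_i$ directly, the Rellich identity leaves an unsigned surface contribution $-\int_{\partial Q_1^\epsilon\cap\omega_i}(m\cdot n)|\nabla_\tau z|^2\,\mathrm{d}s$ on the perforation boundaries (using $\partial_n z=0$ there), which you cannot absorb without a trace bound on $\nabla z$ along $\partial Q_1^\epsilon$---exactly the kind of regularity you were trying to circumvent.

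The paper's Theorem~\ref{thm:pw-Regularity} takes a different route to the same estimate: it exploits the assumed smoothness of $\partial Q_0$ to invoke $H^2$ elliptic regularity $\|z\|_{H^2(\Omega^\epsilon\cap\omega_i)}\lesssim\|\Delta z\|_{L^2}$, interpolates this against the $H^1$ energy bound $\|z\|_{H^1}\lesssim \Ce H\|\phi\|_{L^2}$ to obtain $\|z\|_{H^{3/2}}\lesssim \Ce H^{1/2}\|\phi\|_{L^2}$, and then applies the trace theorem $\|\partial_n z\|_{L^2(\partial\omega_i\setminus\partial Q_1^\epsilon)}\lesssim\|z\|_{H^{3/2}}$. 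This trades your geometric multiplier construction for the regularity hypothesis on the perforation shape already built into the problem setup.
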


To prove Theorem \ref{lem:very-weak}, one has to first derive the $L^2(\partial\omega_i\backslash\partial Q_1^{\epsilon})$-estimate of the normal trace $\frac{\partial z}{\partial n}$ for any $z\in X(\omega_i)$. This is established in the following theorem:
\begin{theorem}\label{thm:pw-Regularity}
Let the Resolution Assumption \ref{ass:resolution} be valid. Let $w\in L^2(\Omega^{\epsilon}\cap\omega_i)$ and let $z\in X(\omega_i)$ satisfy
\begin{equation}\label{eq:pde-dual}
\left\{\begin{aligned}
\mathcal{L}_i z:=\Delta z+k^2 z&=w && \text{ in } \Omega^{\epsilon}\cap\omega_i,\\
\frac{\partial z}{\partial n}&=0 &&\mbox{ on }\partial Q_1^{\epsilon}\cap \omega_i.\\
z&=0 &&\text{ on }\partial \omega_i\backslash \partial Q_1^{\epsilon}.
\end{aligned}\right.
\end{equation}
Then there holds
\begin{align*}
\|\frac{\partial z}{\partial n}\|_{L^2(\partial\omega_i\backslash \partial Q_1^{\epsilon})}\leq
& \Cw\Ce H^{1/2}\normL{w}{\Omega^{\epsilon}\cap\omega_i}.
\end{align*}
Here, $\Cw$ is a positive constant independent of the wavenumber $k$ and the mesh size $H$, which can change values among equations.
\end{theorem}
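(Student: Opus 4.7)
The plan is to establish Theorem \ref{thm:pw-Regularity} via a Rellich/Morawetz-type multiplier identity: I would test the PDE $\mathcal{L}_i z=w$ against $\beta\cdot\nabla\bar z$ for a carefully chosen real vector field $\beta$, and read off an $L^2$ bound for $\partial z/\partial n$ on $\partial\omega_i\backslash\partial Q_1^{\epsilon}$ from the resulting boundary identity, using the two boundary conditions to eliminate uncontrollable boundary terms.

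The first task is the construction of a smooth vector field $\beta\colon\overline{\Omega^{\epsilon}\cap\omega_i}\to\mathbb{R}^d$ satisfying (i) $\beta\cdot n\geq c_1 H$ on $\partial\omega_i\backslash\partial Q_1^{\epsilon}$, (ii) $\beta\cdot n=0$ on $\partial Q_1^{\epsilon}\cap\omega_i$, and (iii) $\|\beta\|_{L^{\infty}}\lesssim H$ together with $\|D\beta\|_{L^{\infty}}\lesssim 1$, all with constants independent of $k$, $H$ and $\epsilon$. Away from the perforations one can take the radial field $\beta(x)=x-O_i$, which is strictly outward on $\partial\omega_i$; near each perforation boundary one modifies $\beta$ through a smooth partition of unity so that it becomes tangent to $\partial Q_1^{\epsilon}$. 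The smoothness of $\partial Q_0$ and the geometric separation of $\partial Q_1^{\epsilon}\cap\omega_i$ from $\partial\omega_i\backslash\partial Q_1^{\epsilon}$ make this construction standard.

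With $\beta$ in hand I would multiply $\Delta z+k^2z=w$ by $\beta\cdot\nabla\bar z$, take $2\,\mathrm{Re}$ and integrate by parts. On $\partial\omega_i\backslash\partial Q_1^{\epsilon}$ the Dirichlet condition $z=0$ forces $\nabla z=(\partial z/\partial n)n$, so the boundary integrand collapses to $(\beta\cdot n)|\partial z/\partial n|^2$; on $\partial Q_1^{\epsilon}\cap\omega_i$ the Neumann condition $\partial z/\partial n=0$ annihilates the mixed term while tangentiality (ii) eliminates the remaining contributions. One is left with
\[
\int_{\partial\omega_i\backslash\partial Q_1^{\epsilon}}(\beta\cdot n)\Big|\tfrac{\partial z}{\partial n}\Big|^2\ds
=2\mathrm{Re}\!\int_{\Omega^{\epsilon}\cap\omega_i}\!w\,\beta\cdot\nabla\bar z\,\dx+\int_{\Omega^{\epsilon}\cap\omega_i}\!\Big[(\mathrm{div}\,\beta)|\nabla z|^2-2\mathrm{Re}(\nabla z)^{\top}(D\beta)\overline{\nabla z}-k^2(\mathrm{div}\,\beta)|z|^2\Big]\dx.
\]

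To close the estimate I would first derive the auxiliary energy bound $\|z\|_{V(\Omega^{\epsilon}\cap\omega_i)}\leq \Cw\Ce H\|w\|_{L^2(\Omega^{\epsilon}\cap\omega_i)}$ by exactly the argument used for $u^{i,\roma}$ in Lemma \ref{lem:parta} (multiply by $\bar z$, Cauchy--Schwarz, Poincar\'{e}, absorb via Assumption \ref{ass:resolution}). Combined with the pointwise bounds on $\beta$ and $D\beta$, the volume terms above are dominated by $H\|w\|_{L^2}\|\nabla z\|_{L^2}+\|z\|_V^2\lesssim \Cw^2\Ce^2 H^2\|w\|_{L^2(\Omega^{\epsilon}\cap\omega_i)}^2$, while property (i) furnishes the lower bound $\int(\beta\cdot n)|\partial z/\partial n|^2\ds\geq c_1 H\|\partial z/\partial n\|_{L^2(\partial\omega_i\backslash\partial Q_1^{\epsilon})}^2$. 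Dividing by $H$ and taking square roots yields the claim. The main obstacle is precisely the geometric construction of $\beta$: one needs uniform $W^{1,\infty}$ bounds together with strict outward orientation on the Dirichlet part of the boundary and strict tangentiality on every perforation boundary, with constants that do not degenerate as $\epsilon\to 0$; once $\beta$ is fixed, the remainder of the argument is routine.
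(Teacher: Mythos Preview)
Your Rellich-multiplier approach is genuinely different from the paper's. The paper proceeds via elliptic regularity: after deriving the same $H^1$ energy bound you use, it bounds $\|\Delta z\|_{L^2}\leq (\Ce+1)\|w\|_{L^2}$ directly from the equation, invokes the global estimate $\|z\|_{H^2(\Omega^\epsilon\cap\omega_i)}\lesssim\|\Delta z\|_{L^2}$ (using smoothness of $\partial Q_0$), interpolates between $H^1$ and $H^2$ to get $\|z\|_{H^{3/2}}\lesssim H^{1/2}\|w\|_{L^2}$, and finishes with the trace inequality $\|\partial z/\partial n\|_{L^2(\partial\omega_i\setminus\partial Q_1^\epsilon)}\lesssim\|z\|_{H^{3/2}}$. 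Your route avoids invoking $H^2$ regularity and stays at the level of integral identities; the paper's route avoids having to build a multiplier field tailored to the perforated geometry and obtains the $H^{1/2}$ scaling transparently from interpolation.

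You correctly flag the construction of $\beta$ as the main obstacle, but the partition-of-unity recipe you sketch does not deliver the uniform bounds you claim when $\epsilon\ll H$. Projecting the radial field $x-O_i$ (of magnitude $\sim H$) onto the tangent space of each perforation boundary forces cutoffs and normal extensions that vary on the $\epsilon$-scale, which generically produces $\|D\beta\|_{L^\infty}\sim H/\epsilon$ rather than $O(1)$. Carrying that through your final step gives $\|\partial z/\partial n\|_{L^2}\lesssim \Ce\, H\epsilon^{-1/2}\|w\|_{L^2}$, so the implied $\Cw$ scales like $\sqrt{H/\epsilon}$ and is no longer $H$-independent as the theorem requires. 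You should either drop the $\epsilon$-uniformity claim and restrict to $H\lesssim\epsilon$ (the regime of the paper's numerical tests, where your argument goes through), or supply a genuinely different construction of $\beta$. Also, do not rely on geometric separation of $\partial Q_1^\epsilon\cap\omega_i$ from $\partial\omega_i\setminus\partial Q_1^\epsilon$: perforations may cross coarse-element boundaries (cf.\ Figure~\ref{fig:localmodel4}).
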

\begin{proof}
An application of the Poincar\'{e} inequality implies
\begin{align}\label{eq:111}
\normL{z}{\Omega^{\epsilon}\cap\omega_i}\leq \Cpoinn{\omega_i}{1/2}H\normL{\nabla z}{\Omega^{\epsilon}\cap\omega_i}.
\end{align}
Testing \eqref{eq:pde-dual} with $z$ and applying the Poincar\'{e} inequality, together with the former result, we obtain
\begin{align*}
\normL{\nabla z}{\Omega^{\epsilon}\cap\omega_i}^2\leq \Cpoin{\omega_i}(Hk)^2\normL{\nabla z}{\Omega^{\epsilon}\cap\omega_i}^2+\Cpoinn{\omega_i}{1/2}H\normL{w}{\Omega^{\epsilon}\cap\omega_i}
\normL{\nabla z}{\Omega^{\epsilon}\cap\omega_i}.
\end{align*}
Thanks to the Resolution Assumption \eqref{ass:resolution}, this yields
\begin{align}\label{eq:gradient}
\normL{\nabla z}{\Omega^{\epsilon}\cap\omega_i}\leq\Ce\Cpoinn{\omega_i}{1/2}H \normL{w}{\Omega^{\epsilon}\cap\omega_i}.
\end{align}
Furthermore, by \eqref{eq:111}, we arrive at
\begin{align}\label{eq:l2}
\normL{ z}{\Omega^{\epsilon}\cap\omega_i}\leq \Ce\Cpoin{\omega_i}H^2\normL{w}{\Omega^{\epsilon}\cap\omega_i}.
\end{align}
One the other hand, a direct calculation results in
\begin{align*}
\normL{\Delta z}{\Omega^{\epsilon}\cap\omega_i}\leq k^2\normL{z}{\Omega^{\epsilon}\cap\omega_i}+\normL{w}{\Omega^{\epsilon}\cap\omega_i}.
\end{align*}
This, together with \eqref{eq:l2} and the Resolution Assumption \eqref{ass:resolution}, yields
\[
\normL{\Delta z}{\Omega^{\epsilon}\cap\omega_i}\leq \Big(\Ce +1\Big)\normL{w}{\Omega^{\epsilon}\cap\omega_i}.
\]
Note that the interface $\partial Q_1^{\epsilon}\cap\omega_i$ has sufficient smoothness, we have the following {\em a priori} estimate
\[
\|z\|_{H^2(\Omega^{\epsilon}\cap\omega_i)}\lesssim \normL{\Delta z}{\Omega^{\epsilon}\cap\omega_i}\leq \Big(\Ce +1\Big)\normL{w}{\Omega^{\epsilon}\cap\omega_i}.
\]
This, together with \eqref{eq:gradient} and applying interpolation between $H^1(\Omega^{\epsilon}\cap\omega_i)$ and $H^2(\Omega^{\epsilon}\cap\omega_i)$ yields the $H^{3/2}(\Omega^{\epsilon}\cap\omega_i)$ regularity estimate
\begin{align}\label{eq:h3/2}
\normHp{z}{\Omega^{\epsilon}\cap\omega_i}{3/2}\lesssim(\Ce+1)\Cpoinn{\omega_i}{1/4}H^{1/2}\normL{w}{\Omega^{\epsilon}\cap\omega_i}.
\end{align}
Since differentiation is continuous from $H^{3/2}(\Omega^{\epsilon}\cap\omega_i)$ to $H^{1/2}(\Omega^{\epsilon}\cap\omega_i)$, by the trace theorem, we have
\begin{align*}
\|\frac{\partial z}{\partial n}\|_{L^2(\partial\omega_i\backslash \partial Q_1^{\epsilon})}&\lesssim \|\nabla z\|_{H^{1/2}(\Omega^{\epsilon}\cap\omega_i)}\lesssim \normHp{z}{\Omega^{\epsilon}\cap\omega_i}{3/2},
\end{align*}
which, together with \eqref{eq:h3/2} and the Resolution Assumption \ref{ass:resolution}, proves the desired assertion.
\end{proof}

\begin{proof}[Proof to Theorem \ref{lem:very-weak}]
The first result can be proved in a similar manner as \cite[Theorem A.1]{GL18}, with the help of Theorem \ref{thm:pw-Regularity}.

To prove the second assertion, recall that $\chi_i$ is the bilinear function supported in $\omega_i$ and $\chi_i=0$ on $\partial\omega_i\backslash \partial Q_1^{\epsilon}$. Multiplying \eqref{eq:pde-very} by $\chi_i^2v$ and applying integration by parts, we arrive at
\begin{align*}
\int_{\Omega^{\epsilon}\cap\omega_i}\chi_i^2|\nabla v|^2\dx=-2 \int_{\Omega^{\epsilon}\cap\omega_i}\nabla v\cdot \nabla\chi_i\chi_i v\dx
+k^2\int_{\Omega^{\epsilon}\cap\omega_i}\chi_i^2 v^2\dx.
\end{align*}
Then an application of the Young's inequality implies
\begin{align*}
\int_{\Omega^{\epsilon}\cap\omega_i}\chi_i^2|\nabla v|^2\dx\leq (4H^{-2}+2k^2) \int_{\Omega^{\epsilon}\cap\omega_i} v^2\dx.
\end{align*}
After taking the square root over the previous estimate, utilizing the first assertion and the Scale Resolution Assumption \ref{ass:resolution}, the second assertion is proved.
\end{proof}

\section{PML for the Helmholtz equation}\label{sec:pml}
To effectively absorb the outgoing wave, we adopt the Perfectly Matched Layer (PML)
\cite{pml1,pml2} in our implementation. Without loss of generality, let the computation domain including the PML areas be $D=(0,1)^2$.
We follow the notations in \cite{ying}, we define
\begin{equation}\label{eq:pml}
d(x)=\left\{
\begin{aligned}
&\frac{C}{\xi}\left(\frac{x-\xi}{\xi}\right)^2,\qquad&&x\in[0,\xi],\\
&0, \qquad&&x\in[\xi,1-\xi],\\
&\frac{C}{\xi}\left(\frac{x-1+\xi}{\xi}\right)^2,\qquad&&x\in[1-\xi,1],
\end{aligned}
\right.
\end{equation}
and
\begin{equation}
g_1(x_1)=\left(1+i\frac{d(x_1)}{k}\right)^{-1},
\end{equation}
and
\begin{equation}
g_2(x_2)=\left(1+i\frac{d(x_2)}{k}\right)^{-1}
\end{equation}
where $x_1$ and $x_2$ are the space variables.
$\xi$ is the thickness of the PML.
The PML method is to replace $\partial_1$ with $g_1(x_1)\partial_1$
and $\partial_2$ with $g_2(x_2)\partial_2$, respectively. Then
then Equation (\ref{eq:model}) becomes
\begin{equation*}
\left(\partial_1\left(\frac{g_1}{g_2}\partial_1\right)+
\partial_2\left(\frac{g_2}{g_1}\partial_2\right)+\frac{k^2}{g_1g_2}\right)u=f(x_1,x_2).
\end{equation*}
In our implementation, we take $C:=100$ and the thickness of the PML $\xi$ equals to one wavelength.

\bibliographystyle{abbrv}
\bibliography{reference}
\end{document}